\RequirePackage{fix-cm}
\documentclass[smallextended]{svjour3}       
\smartqed  
\usepackage{graphicx}
\usepackage{dsfont,latexsym}
\usepackage{mathrsfs} 

\usepackage{amssymb,amscd,amsxtra} 

\usepackage{color}
\definecolor{citation}{rgb}{0.2,0.58,0.2} 
\definecolor{formula}{rgb}{0.1,0.2,0.6}
\definecolor{url}{rgb}{0.3,0,0.5}  

\usepackage[colorlinks=true,linkcolor=formula,urlcolor=url,citecolor=citation]{hyperref}  
  
\newcommand{\R}{{\mathds R}}
\newcommand{\n}{{\mathds n}}
 
\DeclareMathOperator*{\esssup}{ess\,sup}
\DeclareMathOperator*{\essinf}{ess\,inf}

\DeclareMathOperator*{\osc}{osc}

\DeclareMathOperator*{\dist}{dist}


   \def\XXint#1#2#3{{\setbox0=\hbox{$#1{#2#3}{\int}$}
        \vcenter{\hbox{$#2#3$}}\kern-.5\wd0}}


\def\dxy{\,{\rm d}x{\rm d}y}
\def\dx{\,{\rm d}x}
\def\dy{\,{\rm d}y}
\def\dt{\,{\rm d}t}

\def\vs{\vspace{0.9mm}}

\def\eps{\varepsilon}

\allowdisplaybreaks
\makeatletter
\DeclareRobustCommand*{\bfseries}{%
  \not@math@alphabet\bfseries\mathbf
  \fontseries\bfdefault\selectfont
  \boldmath
}
\makeatother

\def\mean#1{\mathchoice%
          {\mathop{\kern 0.2em\vrule width 0.6em height 0.69678ex depth -0.58065ex
                  \kern -0.8em \intop}\nolimits_{\kern -0.4em#1}}%
          {\mathop{\kern 0.1em\vrule width 0.5em height 0.69678ex depth -0.60387ex
                  \kern -0.6em \intop}\nolimits_{#1}}%
          {\mathop{\kern 0.1em\vrule width 0.5em height 0.69678ex
              depth -0.60387ex
                  \kern -0.6em \intop}\nolimits_{#1}}%
          {\mathop{\kern 0.1em\vrule width 0.5em height 0.69678ex depth -0.60387ex
                  \kern -0.6em \intop}\nolimits_{#1}}}
           
\newlength{\defbaselineskip}
\setlength{\defbaselineskip}{\baselineskip}
\newcommand{\setlinespacing}[1]
           {\setlength{\baselineskip}{#1 \defbaselineskip}}

\hyphenation{par-ma u-sber-ti ga-gli-ar-do}

%
%
%
%
%
\begin{document}

\title{The obstacle problem for nonlinear integro-differential operators\thanks{The first author has been supported by the Magnus Ehrnrooth Foundation (grant no. ma2014n1, ma2015n3). The second author has been supported by the Academy of Finland. The third author is a member of Gruppo Nazionale per l'Analisi Matematica, la Probabilit\`a e le loro Applicazioni (GNAMPA) of Istituto Nazionale di Alta Matematica ``F.~Severi'' (INdAM), whose support is acknowledged.
\\ The results of this paper have been announced in the preliminary research report~\cite{KKP16}.}
}


\author{Janne Korvenp\"a\"a       \and
        Tuomo Kuusi 
        \and Giampiero Palatucci
}

\authorrunning{J. Korvenp\"a\"a, T. Kuusi, G. Palatucci} 

\institute{J. Korvenp\"a\"a, T. Kuusi \at
             Department of Mathematics and Systems Analysis, Aalto University\\
             P.O. Box 1100\\
             00076 Aalto, Finland\\
              Telefax:  +358 9 863 2048\\
              \email{janne.korvenpaa@aalto.fi} \  \email{tuomo.kuusi@aalto.fi}         
           \and
           G. Palatucci \at
              Dipartimento di Matematica e Informatica, Universit\`a degli Studi di Parma\\
              Campus - Parco Area delle Scienze,~53/A\\
              43124 Parma, Italy\\
              Tel: +39 521 90 21 11\\ 
              \email{giampiero.palatucci@unipr.it}
}

\date{      }

\maketitle

\vspace{-1.9cm}

\noindent
\\ to appear in {\large \it Calc. Var. Partial Differential Equations}

{\vspace{6mm} \it \small Qualche tempo dopo Stampacchia, partendo sempre dalla sua disequazione variazionale, 
 aperse un nuovo campo di ricerche che si rivel\`o importante e fecondo. 
 Si tratta di quello che oggi \`e chiamato {\rm ``}il problema 
dell'ostacolo{\rm''}. \hfill {\rm [}Sandro Faedo, 1987\,{\rm]}}\vspace{10mm}

\begin{abstract}
We investigate the obstacle problem for a class of nonlinear equations driven by nonlocal, possibly degenerate, integro-differential operators, whose model is the fractional $p$-Laplacian operator with measurable coefficients. 
Amongst other results, we will prove both the existence and uniqueness of the solutions to the obstacle problem, and that these solutions inherit regularity properties, such as boundedness, continuity and H\"older continuity (up to the boundary),  from the obstacle.
 \keywords{ fractional Sobolev spaces \and quasilinear nonlocal operators \and nonlocal tail \and Caccioppoli estimates \and obstacle problem \and comparison estimates \and fractional superharmonic functions} 
 \subclass{Primary: 35D10  \and  35B45; \
Secondary: 35B05 \and  35R05 \and  47G20 \and  60J75}
\end{abstract}

\tableofcontents

\setlinespacing{1.08}

\section{Introduction}

In the present paper, we study the obstacle problem related to the following nonlocal and nonlinear operator, defined formally as
\begin{equation}\label{operatore}
\mathcal{L}u(x)=p.~\!v. \int_{\R^n} K(x,y)|u(x)-u(y)|^{p-2}\big(u(x)-u(y)\big)\dy, \qquad x\in \R^n;
\end{equation}
we take differentiability of order $s\in (0,1)$ and growth $p>1$. The kernel $K$ is of order $(s,p)$ (see~\eqref{hp_k}) with merely measurable coefficients.  
The integral in~\eqref{operatore} may be singular at the origin and must be interpreted in an appropriate sense. Since we assume that coefficients are merely measurable, the involved equation has to have a suitable weak formulation; see Section~\ref{sec_preliminaries} below for the precise assumptions on the involved quantities.
 
\smallskip
 
The  obstacle problem involving fractional powers of the Laplacian operator naturally appears in many contexts, such as in the analysis of anomalous diffusion (\cite{BG90}), in the so called quasi-geostrophic flow problem (\cite{CV10}), and in pricing of American options regulated by assets evolving in relation to jump processes (\cite{CT04}). 
In particular, the last application made the obstacle problem very relevant in recent times in all its forms; the obstacle problem can be indeed stated in several ways.
Roughly speaking, a solution $u$ to the fractional obstacle problem is a minimal weak supersolution to the equation
\begin{equation}\label{equazione}
\mathcal{L}u=0
\end{equation} 
above an obstacle function $h$.

\smallskip

In the linear case when $p=2$ and when the kernel~$K$ reduces to the Gagliardo kernel~$K(x,y)=|x-y|^{-n-2s}$ without coefficients, a large treatment of the fractional obstacle problem can be found for instance in the fundamental papers by Caffarelli, Figalli, Salsa, and Silvestre (see, e.~\!g.,~\cite{Sil07,CSS08,CF13} and the references therein). See also~\cite{Foc10,Foc09} for the analysis of families of bilateral obstacle problems involving fractional type energies in aperiodic settings; 
the paper~\cite{PP15b} for the fractional obstacle problems with drift; and the recent papers~\cite{Gua15,MN15} for related estimates and approximations results.
This topic, despite its relatively short history, has already evolved into quite an elaborate theory, with connections to numerous branches of Analysis. It is impossible to provide here a complete list of references. We refer the interested reader to the exhaustive recent lecture notes by Salsa (\cite{Sal12}), for the obstacle problem in the pure fractional Laplacian case, with the natural connection to the thin obstacle problem in low dimensions (for which we refer to~\cite{ACS08}).

\smallskip

However, in the more general framework considered here, the panorama seems rather incomplete, if not completely deficient in results. Clearly, the main difficulty into the treatment of the operators~$\mathcal{L}$ in~\eqref{operatore} lies in their very definition, which combines the typical issues given by its nonlocal feature together with the ones given by its nonlinear growth behavior; also, further efforts are needed due to the presence of merely measurable coefficients in the kernel~$K$. For this, some very important tools recently introduced in the nonlocal theory, as the by-now classic $s$-harmonic extension (\cite{CS07}), the strong three-term commutators estimates (\cite{DLR11}), and other  successful tricks as e.~\!g. the pseudo-differential commutator approach in~\cite{PP14,PP15}, cannot be plainly applied and seem difficult to adapt to the nonlinear framework considered here (mainly due to the non-Hilbertian nature of the involved fractional Sobolev spaces $W^{s,p}$).

\smallskip

Nevertheless, some related regularity results have been very recently achieved in this context, in~\cite{BL15,DKP14,DKKP15,DKP15,KMS15,KMS15b,IMS15,IS14,Sch15} and many others, where often a fundamental role to understand the nonlocality of the nonlinear operators~$\mathcal{L}$ has been played by a special quantity,
\begin{equation}\label{coda}
{\rm Tail}(u;x_0,r) := \bigg(r^{sp} \int_{\R^n \setminus B_r(x_0)} |u(x)|^{p-1} |x-x_0|^{-n-sp} \dx \bigg)^{\frac{1}{p-1}};
\end{equation}
that is, {\it the nonlocal tail} of a function $u$ in the ball of radius $r>0$ centered in $x_0 \in \R^n$. This quantity, introduced by two of the authors with A. Di Castro in~\cite{DKP15}, have been subsequently became a relevant factor in many instances when one requires a fine quantitative control of the long-range interactions, which naturally arise when dealing with nonlocal operators (see Section~\ref{sec_preliminaries} below).

\smallskip

For what concerns the main topic in the present paper, i.~\!e., the nonlinear fractional obstacle problem with coefficients, we will prove a series of both qualitative and quantitative results. Amongst them, 
we will formulate the natural variational framework for the obstacle problem, and we will prove both the existence and uniqueness of the solution~$u$ to this variational formulation (Theorem~\ref{obst prob sol}). We will show that such a solution is a weak supersolution and that it is the smallest supersolution above the obstacle in a suitable sense (Proposition~\ref{smallest super}).
We will also demonstrate that the solution~$u$ inherits the regularity of the obstacle, namely the boundedness~(Theorem~\ref{thm:obs bnd}), continuity (Theorem~\ref{thm:obs cont}), and H\"older continuity (Theorem~\ref{thm:obs H cont}). As a consequence,  assuming that the obstacle function~$h$ is continuous,~$u$ is a weak solution to~\eqref{equazione} in the open set $\{u>h\}$ (Corollary~\ref{obst prob free}). These results are in clear accordance with the aforementioned results for the obstacle problems in the pure fractional Laplacian $(-\Delta)^s$ case. However, our approach here is different and, though we are dealing with a wider class of nonlinear integro-differential operators with coefficients, the proofs are even somehow simpler, since we can make effort of a new nonlocal set-up together with the recent quantitative estimates obtained in~\cite{DKP14,DKP15,DKKP15}, by also extending to the fractional framework some important tools in the classical Nonlinear Potential Theory.

\smallskip

Finally, we will deal with the regularity up to the boundary (Theorems~\ref{thm:H cont bdry}-\ref{thm:cont bdry}). As well known, in the contrary with respect to the interior case, boundary regularity for nonlocal equations driven by singular, possibly degenerate, operators as in~\eqref{operatore} seems to be a difficult problem in a general {\it nonlinear} framework under natural assumptions on the involved quantities (while we refer to the recent paper~\cite{Ros14} and to the forthcoming survey~\cite{Ros16} for the case $p=2$). In this respect, a first (and possibly the solely currently present in the literature) result of global H\"older regularity has been obtained very recently in the  interesting paper~\cite{IMS15}, where the authors deal with the equation in~\eqref{equazione}, in the special case when the operator $\mathcal{L}$ in~\eqref{operatore} coincides with the nonlinear fractional Laplacian~$(-\Delta)^{s}_p$, by considering exclusively zero Dirichlet boundary data, and by assuming a strong $C^{1,1}$-regularity up to the boundary for the domain~$\Omega$. Indeed, their proof is strongly based on the construction of suitable barriers near $\partial \Omega$, starting from the fact that, under their restrictive assumptions, the function $x\mapsto x^s_+$ is an explicit solution in the half-space. Clearly, one cannot expect to plainly extend such a strategy in the general framework considered here, in view of the presence of merely measurable coefficients in~\eqref{operatore}. Also, we will allow nonzero boundary Dirichlet data to be chosen, and we will assume the domain~$\Omega$ only to satisfy a natural {\it measure density condition} (precisely, just on the complement of~$\Omega$; see Formula~\eqref{eq:dens cond} on Page~\pageref{eq:dens cond}); the latter being as expected in accordance with the classical Nonlinear Potential Theory (that is, when $s=1$).
For this, we will need a new proof, that will extend up to the boundary part of the results in~\cite{DKP14,DKP15} together with a careful handling of the tail-type contributions (see Section~\ref{sec_boundary}). Once again, it is worth stressing that all these results are new even in the pure fractional $p$-Laplacian case when the operator~$\mathcal{L}$ does coincide with $(-\Delta)^s_p$, and  in the case of the (linear) fractional Laplacian with coefficients.

\smallskip

All in all, let us summarize  
 the contributions of the present paper:  
\noindent
\\ - We prove new regularity results in terms of boundedness, continuity, and H\"older continuity for the solutions to a very general class of nonlocal obstacle problems, by extending previous results in the literature valid only for the pure linear fractional Laplacian case $(-\Delta)^s$ without coefficients, also giving new proofs even in that case;
\noindent
\\ - We obtain new regularity results up to the boundary for nonlocal operators, and, since we allow the obstacle function~$h$ to be an extended real-valued function, the degenerate case when $h\equiv -\infty$ (i.~\!e., no obstacle is present) does reduce the problem to the standard Dirichlet boundary value problem, so that the results proven here are new even when $\mathcal{L}$ does coincide with the fractional $p$-Laplacian $(-\Delta)^s_p$. Also, since we assume that the boundary data merely belong to an appropriate tail space~$L^{p-1}_{sp}(\R^n)$, all the (inner and boundary) results here reveal to be an improvement with respect to the previous ones in the literature when the data are usually given in the whole fractional Sobolev space~$W^{s,p}(\R^n)$;
\noindent
\\ - By solving the fractional obstacle problem together with some of the expected basic results proven here, we provide an important tool for several further investigations and applications.
Indeed, as well known, the obstacle problem is deeply related to the study of minimal surfaces and the capacity of a set in Nonlinear Potential Theory. Thus, by means of our framework, we possibly give the basis for the development of a {\it nonlocal} Nonlinear Potential Theory. This can be already seen in some subsequent forthcoming papers, as, e.~\!g., in ~\cite{KKL16} where part of the results here are the key for the viscosity approach for nonlocal integro-differential operators, and in~\cite{DKKP15} where the whole nonlocal obstacle set-up is needed to extend the classical Perron method to a nonlocal nonlinear setting.

\smallskip

Finally, let us comment about some immediate open problems naturally arising in this framework. Firstly, one can argue about the optimal regularity for the solutions to the nonlocal obstacle problem. We recall that for the classical obstacle problem, when $\mathcal{L}$ coincides with the Laplacian operator $-\Delta$,  the solutions are known to belong to~$C^{1,1}$. 
The intuition behind this regularity result goes as follows:  in the contact set one has $-\Delta u = -\Delta h$, while where $u >h$ one has $-\Delta u=0$; since the Laplacian jumps from $-\Delta h$ to $0$ across the free boundary, the second derivatives of $u$ must have a discontinuity, so that $C^{1,1}$ is the maximum regularity class that can be expected. In the contrary, when $\mathcal{L}\equiv(-\Delta)^s$, despite
the previous local argument does suggest that the solutions $u$ belong to $C^{2s}$,  the optimal regularity is $C^{1,s}$, and this is quite surprising since the regularity exponent is higher than the order of the equation.  
In the general nonlocal framework, starting from the H\"older regularity proven here, we still expect higher regularity results as for the linear case; nevertheless, in view of the interplay between local and nonlocal contributions and without having the possibility to rely on the $s$-harmonic extension, it is not completely evident what the optimal exponent could be as the nonlinear growth does take its part.\footnote{For preliminary results in this direction, it is worth mentioning the very recent paper~\cite{CRS16}, where optimal regularity results of the solution to the obstacle problem, and of the free boundary near regular points, have been achieved for linear integro-differential operators as in~\eqref{operatore} in the case when~$p=2$.}

 Secondly, it could be interesting to investigate the regularity in a generic point of the free boundary (known to be analytic in the case of the Laplacian, except on a well defined set of singular points, and smooth in the case of the fractional Laplacian). 

 Thirdly, a natural goal is to investigate the parabolic version of the nonlocal obstacle problem, as it is inspired in the so-called optimal stopping problem with deadline, by corresponding to the American option pricing problem with expiration at some given time.  
An extension in the setting presented here could be of relevant interest as it could describe a situation which also takes into account  the interactions coming from far together with a natural inhomogeneity. Accordingly with the optimal stopping problem model, a starting point in such an investigation could be the special case when the obstacle~$h$ coincides with the boundary value~$g$.

\medskip
The paper is organized as follows. In Section~\ref{sec_preliminaries} below, we fix the notation by also stating some general assumptions on the quantities we will deal with throughout the whole paper.
In Section~\ref{sec_obstacle}, we introduce the nonlinear fractional obstacle problem, and state and prove the existence and uniqueness of the related solutions.
The last two sections are devoted to the proofs of all the aforementioned boundedness and continuity results (Section~\ref{sec_regularity}), and up to the boundary (Section~\ref{sec_boundary}).

\medskip

\section{Preliminaries}\label{sec_preliminaries}

In this section, we state the general assumptions on the quantities we are dealing with. We keep these assumptions throughout the paper.
\smallskip

First of all, we recall that the class of integro-differential equations in which we are interested is the following
\begin{equation}\label{problema}
\mathcal{L}u(x)=\int_{\R^n} K(x,y)|u(x)-u(y)|^{p-2}\big(u(x)-u(y)\big)\dy = 0, \quad x\in \R^n.
\end{equation}
The nonlocal operator~$\mathcal{L}$ in the display above (being read a priori in the principal value sense) is driven by its {\it kernel} $K:\R^n\times \R^n \to [0,\infty)$, which is a measurable function  satisfying the following property:
\begin{equation}\label{hp_k}
\Lambda^{-1} \leq K(x,y)|x-y|^{n+sp} \leq \Lambda \quad \text{for a.~\!e. } x, y \in \R^n,
\end{equation}
for some $s\in (0,1)$, $p>1$, $\Lambda \geq1$. We immediately notice that in the special case when $p=2$ and $\Lambda=1$, we recover the well-known fractional Laplacian operator~$(-\Delta)^s$.
Also, notice that the assumption on $K$ can be weakened,
and in \eqref{problema} the dependence of $u(x)-u(y)$, in turn, can be weakened from $t \mapsto |t|^{p-2}t$ (see, for instance,~\cite{KMS15}).
However, for the sake of simplicity, we will take \eqref{problema} and we will
work under the assumption in~\eqref{hp_k}, since the weaker assumptions would bring no relevant differences in all the forthcoming proofs. 

\medskip

We now  recall the definition of {\it the nonlocal  tail \,{\rm{Tail}$(f; z, r)$} of a function $f$ in the ball of radius $r>0$ centered in $z\in \R^n$}. We have
\begin{equation} \label{def_tail} 
{\rm Tail}(f;z,r) := \bigg(r^{sp} \int_{\R^n \setminus B_r(z)} |f(x)|^{p-1} |x-z|^{-n-sp} \dx \bigg)^{\frac{1}{p-1}},
\end{equation}
for any function $f$ initially defined in $L^{p-1}_{\textrm{loc}}(\R^n)$. As mentioned in the introduction, this quantity will play an important role in the rest of the paper. The nonlocal tail has been introduced in~\cite{DKP15}, and used subsequently in several recent papers (see e.~\!g.,~\cite{BL15,DKP14,HRS15,KMS15,KMS15b,IMS15,IS14} and many others\footnote{
When needed, our definition of Tail can also be given in a more general way by replacing the ball~$B_r$ and the corresponding~$r^{sp}$ term by an open bounded set~$E\subset\R^n$ and its rescaled measure~$|E|^{sp/n}$, respectively. This is not the case in the present paper.
}), where it has been crucial to control in a quantifiable way the long-range interactions which naturally appear when dealing with nonlocal operators of the type considered here in~\eqref{problema}.
When having to control the positive and negative interactions separately, we denote the positive part and the negative part of a function $u$ by $u_+:=\max\{u,0\}$ and $u_-:=\max\{-u,0\}$, respectively. 
In the following, when the center point $z$ will be clear from the context, we shall use the shorter notation \, Tail$(f; r)\equiv$ Tail$(f; z, r)$. 
 Now, in clear accordance with the definition in~\eqref{def_tail}, for any $p>1$ and any $s\in (0,1)$, one can consider the corresponding  {\it tail space} $L^{p-1}_{sp}(\R^n)$ given by
\begin{equation*} 
L^{p-1}_{sp}(\R^n) := \Big\{ f \in L_{\rm loc}^{p-1}(\R^n) \; : \; {\rm Tail}(f;z,r)< \infty \quad \forall z \in \R^n, \forall r \in (0,\infty)\Big\}.
\end{equation*}
Notice that 
\begin{equation*}
L^{p-1}_{sp}(\R^n) = \Big\{ f \in L_{\rm loc}^{p-1}(\R^n) \; : \;   \int_{\R^n} |f(x)|^{p-1} (1+|x|)^{-n-sp} \dx < \infty \Big\}.
\end{equation*}
As expected, one can check that $W^{s,p}(\R^n) \subset L^{p-1}_{sp}(\R^n)$, where we denoted by $W^{s,p}(\R^n)$ the usual fractional Sobolev space of order $(s,p)$, 
defined by the norm
\begin{align}\label{def_seminorm}
\|v\|_{W^{s,p}(\R^n)} & :=  \|v\|_{L^p(\R^n)}
+ [v]_{{W}^{s,p}(\R^n)}
 \nonumber \\
&\,\, = \left(\int_{\R^n} |v|^p\dx \right)^{\frac1p} + \left(\int_{\R^n}\int_{\R^n}\frac{|v(x)-v(y)|^p}{|x-y|^{n+sp}}\dxy\right)^{\frac1p}, 
\end{align}
where $s\in (0,1)$ and $p\geq1$. The local fractional Sobolev space $W^{s,p}(\Omega)$ for $\Omega \subset \R^{n}$ is defined similarly.
By $W_0^{s,p}(\Omega)$ we denote the closure of $C_0^\infty(\Omega)$ in $W^{s,p}(\R^n)$. Conversely, if $v \in W^{s,p}(\Omega')$ with $\Omega \Subset \Omega'$ and $v=0$ outside of $\Omega$ almost everywhere, then $v$ has a representative in $W_0^{s,p}(\Omega)$ as well (see, for instance, \cite{DPV12}). 
\medskip

We  now recall the definitions of sub and supersolutions $u$ to the class of 
integro-differential problems we are interested in.
A function $u \in W^{s,p}_{\rm{loc}}(\Omega)\cap L^{p-1}_{sp}(\R^n)$ is a {\it  fractional weak $p$-supersolution} of~$\eqref{problema}$ if
\begin{align} \label{supersolution}
\langle \mathcal{L}u,\eta\rangle & \equiv  \int_{\R^n} \int_{\R^n}K(x,y)|u(x)-u(y)|^{p-2}\big(u(x)-u(y)\big)\big(\eta(x)-\eta(y)\big)\dxy \nonumber\\*
& \ge 0
\end{align} 
for every nonnegative $\eta \in C^\infty_0(\Omega)$. Here $\eta \in C^\infty_0(\Omega)$ can be replaced by $\eta \in W^{s,p}_0(\Omega')$ with
every $\Omega' \Subset \Omega$. 
It is worth noticing that the summability assumption of $u$ belonging to the tail space $L^{p-1}_{sp}(\R^n)$ is what one expects in the  nonlocal framework considered here (see~\cite{DKKP15}). 
\\ A function $u \in W^{s,p}_{\rm{loc}}(\Omega) \cap L^{p-1}_{sp}(\R^n)$ is a {\it  fractional weak 
$p$-subsolution} if $-u$ is a  fractional weak $p$-supersolution. Finally, a function $u$ is a {\it  fractional weak $p$-solution} if it is both  fractional weak $p$-sub and supersolution. In the following, we simply refer to those $u$ as (weak) supersolutions, subsolutions and solutions. 

Moreover, let us remark that we will assume that the kernel~$K$ is symmetric, and once again this is not restrictive,  in view of the weak formulation presented above, since one may always define the corresponding symmetric kernel $K_{\textrm{\tiny sym}}$ given by
$$
K_{\textrm{\tiny sym}}(x,y):=\frac1{2}\Big(K(x,y)+K(y,x)\Big).
$$

\medskip
We conclude this section by presenting some basic estimates which will be useful in the course of the forthcoming proofs. 
As customary when dealing with nonlinear operators, we will often have to treat in a different way the superquadratic case when $p>2$ and the subquadratic case $1<p<2$.
In order to simplify the notation in the weak formulation in~\eqref{supersolution}, from now on we denote by
\begin{equation}\label{def_l}
L(a,b):=|a-b|^{p-2}(a-b), \quad a,b \in \R.
\end{equation}
Notice that $L(a,b)$ is increasing with respect to $a$ and decreasing with respect to $b$.

\begin{lemma} \label{lemma:1<p<2}
Let $1<p \le 2$ and $a,b,a',b' \in \R$. Then
\begin{align}\label{lem_2.2}
|L(a,b)-L(a',b')| \le 4|a-a'-b+b'|^{p-1}.
\end{align}
\end{lemma}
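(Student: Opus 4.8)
The plan is to reduce the inequality to a one-variable statement by writing $t := a-b$ and $t' := a'-b'$, so that $L(a,b) = |t|^{p-2}t =: \psi(t)$ and the claim becomes $|\psi(t) - \psi(t')| \le 4|t-t'|^{p-1}$ for all $t,t' \in \R$. This is the natural simplification because $L$ depends on $(a,b)$ only through the difference, and the target bound is expressed purely in terms of $a-a'-b+b' = t-t'$. So the whole lemma is equivalent to the H\"older-type continuity of the map $\psi(t) = |t|^{p-1}\operatorname{sign}(t)$ with exponent $p-1 \in (0,1]$.

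Next I would prove the scalar estimate $|\psi(t)-\psi(t')| \le C_p |t-t'|^{p-1}$. The cleanest route is to treat two cases according to the relative signs of $t$ and $t'$. If $t$ and $t'$ have the same sign (say both $\ge 0$, the other case being symmetric via oddness of $\psi$), then one uses the elementary inequality $\big||x|^{p-1} - |y|^{p-1}\big| \le |x-y|^{p-1}$ valid for $x,y \ge 0$ and $0 < p-1 \le 1$ (concavity/subadditivity of $r \mapsto r^{p-1}$ on $[0,\infty)$). This already gives constant $1$ in the same-sign case. If $t \ge 0 \ge t'$, then $|\psi(t)-\psi(t')| = |t|^{p-1} + |t'|^{p-1}$, and since $|t|, |t'| \le |t-t'|$ in this configuration, we get $|t|^{p-1} + |t'|^{p-1} \le 2|t-t'|^{p-1}$. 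The worst constant across the cases is $2$, which is comfortably below the claimed $4$; the slack is presumably there so the authors don't have to optimize, so I would just record the bound with constant $4$ and move on.

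The main (and really only) obstacle is justifying the scalar subadditivity inequality $|x|^{p-1} - |y|^{p-1} \le |x-y|^{p-1}$ for nonnegative $x,y$ when $0 < p-1 \le 1$. I would prove it by WLOG taking $x \ge y \ge 0$ and setting $r = (x-y)/x \in [0,1]$ (the case $x=0$ being trivial), so the claim reduces to $1 - (1-r)^{p-1} \le r^{p-1}$, i.e. $1 \le r^{p-1} + (1-r)^{p-1}$, which holds because for an exponent $\alpha = p-1 \in (0,1]$ one has $r^\alpha \ge r$ and $(1-r)^\alpha \ge 1-r$ on $[0,1]$, summing to $\ge 1$. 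Alternatively one can cite superadditivity of $r\mapsto r^\alpha$ for $\alpha\le 1$ directly. Once this scalar fact is in hand, the case analysis above assembles the full estimate $|\psi(t)-\psi(t')| \le 2|t-t'|^{p-1} \le 4|t-t'|^{p-1}$, and substituting back $t = a-b$, $t' = a'-b'$ completes the proof of~\eqref{lem_2.2}.
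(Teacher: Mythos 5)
Your proof is correct, and it takes a genuinely different (and more elementary) route than the paper. You first observe that $L(a,b)$ depends on $(a,b)$ only through $t=a-b$, reducing the lemma to the H\"older continuity $|\psi(t)-\psi(t')|\le C|t-t'|^{p-1}$ of $\psi(t)=|t|^{p-2}t$, and then you settle this by a sign case analysis resting on the subadditivity of $r\mapsto r^{p-1}$ on $[0,\infty)$; your justification of that subadditivity via $r^\alpha+(1-r)^\alpha\ge r+(1-r)=1$ is sound, and the opposite-sign case correctly uses $\max\{|t|,|t'|\}\le|t-t'|$. The paper instead writes $L(a,b)-L(a',b')=\int_0^1 f'(t)\,dt$ along the segment joining $(a,b)$ to $(a',b')$, which produces the factor $(p-1)|a-a'-b+b'|\int_0^1|t(a-b)+(1-t)(a'-b')|^{p-2}\,dt$, and then estimates the resulting singular integral $g(\alpha)=\int_0^1|t\alpha+1-t|^{p-2}\,dt$ by a change of variables. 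What the paper's route buys is uniformity of method: the same fundamental-theorem-of-calculus template is reused verbatim in Lemma~2.2 for $p\ge2$, where no subadditivity trick is available. What your route buys is the avoidance of differentiating the non-smooth map $t\mapsto|t|^{p-2}t$ and of the delicate averaging estimate for $g(\alpha)$, and it yields the sharper constant $2$ in place of $4$ (the constant is immaterial for the applications, as you note).
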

\begin{proof}
Denoting by 
\[
f(t):=L\big(ta+(1-t)a', tb+(1-t)b'\big),
\]
we obtain by the chain rule
\begin{align} \label{Lab-La'b'}
|L(a,b)-L(a',b')| &= \Big| \int_0^1 f'(t) \dt \Big| \nonumber \\
& = \Big| \int_0^1 (a-a')\partial_a L + (b-b')\partial_b L \dt \Big| \nonumber \\
&= (p-1)|a-a'-b+b'|\int_0^1 |t(a-b)+(1-t)(a'-b')|^{p-2} \dt,
\end{align}
where we also used that
\[
\partial_a L(a,b)=(p-1)|a-b|^{p-2} \quad
\text{and} \quad \partial_b L(a,b)=-(p-1)|a-b|^{p-2}.
\]

Now, for $\alpha \in [-1,1]$, define 
\[
g(\alpha) := \int_0^{1} |t \alpha + 1-t|^{p-2}\dt.
\]
Note that $g(1) = 1$. If $\alpha<1$, then changing variables as $\tau=t\alpha+ 1-t$ yields
\[
g(\alpha) \, = \, \frac{1}{1-\alpha} \int_{\alpha}^{1}  |\tau|^{p-2}\,{\rm d}\tau
\,  \leq \,  2\int_{0}^{1}  \tau^{p-2}\,{\rm d}\tau 
\, = \,  \frac2{p-1}.
\]
By using the estimate above for $\alpha := \beta/\gamma$ with $|\beta|\leq |\gamma|$, we obtain
\begin{align}\label{210b}
|\gamma-\beta| \int_0^{1} |t \beta + (1-t)\gamma|^{p-2}\dt & = |\gamma-\beta| |\gamma|^{p-2} g(\alpha) \nonumber \\
& \leq \frac{4}{p-1} |\gamma-\beta|^{p-1}.
\end{align}
Finally, by combining \eqref{210b} with \eqref{Lab-La'b'}, letting $\beta=a-b$ and $\gamma=a'-b'$, we obtain the desired result.
\end{proof}

\begin{lemma} \label{lemma:p>2}
Let $p \ge 2$ and $a,b,a',b' \in \R$. Then 
\begin{align} \label{p>2aba'b}
|L(a,b)-L(a',b)| \le c\,|a-a'|^{p-1}+c\,|a-a'||a-b|^{p-2}
\end{align}
and
\begin{align} \label{p>2abab'}
|L(a,b)-L(a,b')| \le c\,|b-b'|^{p-1}+c\,|b-b'||a-b|^{p-2},
\end{align}
where $c$ depends only on $p$.
\end{lemma}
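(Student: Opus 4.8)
The plan is to reduce both estimates to a single one–variable inequality for the map $\phi(t):=|t|^{p-2}t$, exploiting that $L(a,b)=\phi(a-b)$ by the very definition~\eqref{def_l}. For~\eqref{p>2aba'b} I would set $\sigma:=a-b$ and $\tau:=a'-b$, so that $\sigma-\tau=a-a'$ while $\sigma=a-b$; for~\eqref{p>2abab'} I would instead set $\sigma:=a-b$ and $\tau:=a-b'$, so that $\sigma-\tau=b'-b$ while again $\sigma=a-b$. In both cases it therefore suffices to prove that, for every $p\ge 2$ and all $\sigma,\tau\in\R$,
\[
|\phi(\sigma)-\phi(\tau)| \ \le\ c\,|\sigma-\tau|^{p-1} + c\,|\sigma-\tau|\,|\sigma|^{p-2}, \qquad c=c(p);
\]
substituting the two choices above, and noting $|\sigma-\tau|=|a-a'|$ in the first case and $|\sigma-\tau|=|b-b'|$ in the second, yields precisely~\eqref{p>2aba'b} and~\eqref{p>2abab'}, with the surviving power always landing on $|a-b|^{p-2}$ exactly because $\sigma=a-b$ in each reduction.

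To establish the displayed one–variable estimate I would use that $\phi\in C^1(\R)$ with $\phi'(t)=(p-1)|t|^{p-2}$ (here $p-2\ge 0$), hence
\[
|\phi(\sigma)-\phi(\tau)| \ =\ (p-1)\Big|\int_\tau^\sigma |t|^{p-2}\dt\Big| \ \le\ (p-1)\,|\sigma-\tau|\,\big(\max\{|\sigma|,|\tau|\}\big)^{p-2},
\]
since $|t|\le\max\{|\sigma|,|\tau|\}$ for every $t$ lying between $\sigma$ and $\tau$. Then I would bound $\max\{|\sigma|,|\tau|\}\le|\sigma|+|\sigma-\tau|$ and invoke the elementary inequality $(x+y)^q\le 2^q(x^q+y^q)$, valid for all $x,y\ge 0$ and $q\ge 0$, with $q=p-2$, to get $\big(\max\{|\sigma|,|\tau|\}\big)^{p-2}\le 2^{p-2}\big(|\sigma|^{p-2}+|\sigma-\tau|^{p-2}\big)$. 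Multiplying through by $(p-1)|\sigma-\tau|$ gives the estimate with $c=(p-1)2^{p-2}$.

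The argument is essentially routine, so I do not anticipate a genuine obstacle; the only point deserving a moment's care is that $q=p-2$ need not be $\ge 1$, so in the last step one relies on the chain $(x+y)^q\le 2^q\max\{x,y\}^q\le 2^q(x^q+y^q)$ rather than on convexity, which covers the cases $0\le p-2<1$ and $p-2\ge 1$ uniformly. It is also worth keeping in mind the degenerate endpoint $p=2$, where $|t|^{p-2}\equiv 1$, the map $\phi$ is linear, and the inequality is trivial. A small alternative, avoiding the calculus identity, is to note $|\phi(\sigma)-\phi(\tau)|\le (p-1)\,|\sigma-\tau|\,(\max\{|\sigma|,|\tau|\})^{p-2}$ directly from monotonicity of $r\mapsto|r|^{p-2}$ on each half-line together with a sign discussion, but the integral form above is the cleanest to write down.
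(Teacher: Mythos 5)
Your proof is correct and follows essentially the same route as the paper: both integrate the derivative of $t\mapsto|t|^{p-2}t$ along the segment joining $a-b$ to $a'-b$ (the paper's parametrization $t\mapsto L(ta'+(1-t)a,b)$ is exactly your $\int_\tau^\sigma|t|^{p-2}\,dt$ after a change of variables) and then bound the resulting factor by $c\,(|a-b|^{p-2}+|a-a'|^{p-2})$. Your observation that $L(a,b)=\phi(a-b)$ lets you treat \eqref{p>2aba'b} and \eqref{p>2abab'} in one stroke, which is a minor streamlining of the paper's ``similarly'' for the second inequality.
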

\begin{proof}
Denoting by $f(t):=L\big(ta'+(1-t)a,\,  b\big)$, we obtain by the chain rule
\begin{align*}
|L(a,b)-L(a',b)| &= \Big| \int_0^1 f'(t) \dt \Big| = \Big| \int_0^1 (a'-a)\partial_a L(ta'+(1-t)a,b) \dt \Big| \\[1ex]
&= (p-1)|a-a'| \int_0^1 |ta'+(1-t)a-b|^{p-2} \dt \\[1ex]
&\le c\,|a-a'|^{p-1}+c\,|a-a'||a-b|^{p-2},
\end{align*}
where we also used that
\[
\partial_a L(a,b)=(p-1)|a-b|^{p-2}.
\]
Thus, the inequality in~\eqref{p>2aba'b} does hold. Similarly, one can prove the inequality in~\eqref{p>2abab'}.
\end{proof}

Finally, we would like to make the following observation. In the rest of the paper, we often use the fact that there is a constant $c>0$ depending only on $p$ such that
\begin{align} \label{a-b bounds}
\frac1{c} \le \frac{\big(|a|^{p-2}a-|b|^{p-2}b\big)(a-b)}{(|a|+|b|)^{p-2}(a-b)^{2}} \le c,
\end{align}  
when $a,b \in \R$, $a \neq b$. In particular,
\begin{align} \label{a-b positive}
\big(|a|^{p-2}a-|b|^{p-2}b\big)(a-b) \geq 0, \quad a,b \in \R.
\end{align}

\medskip

\section{The obstacle problem}\label{sec_obstacle}
As mentioned in the introduction, by solving the fractional obstacle problem we will provide an important tool in the development of the fractional Nonlinear Potential Theory, and, in order to present such a topological approach, 
we start by introducing a necessary set of notation. 
Let $ \Omega \Subset \Omega'$ be open bounded subsets of $\R^n$. Let  
$h \colon \R^n \to [-\infty,\infty)$ be an extended real-valued function, which is considered to be the obstacle,
and let $g \in W^{s,p}(\Omega') \cap L^{p-1}_{sp}(\R^n)$ be the boundary values. We define
$$
\mathcal K_{g,h}(\Omega,\Omega') := \Big\{u \in W^{s,p}(\Omega') \,:\, u \geq h\, \text{ a.~\!e. in } \Omega, \, u = g\, \text{ a.~\!e. on } \R^n \setminus \Omega \Big\}.
$$
The interpretation for the case $h \equiv -\infty$ is that 
$$
\mathcal K_{g}(\Omega,\Omega')  \equiv \mathcal K_{g,-\infty}(\Omega,\Omega') := \Big\{u \in W^{s,p}(\Omega') \,:\, u = g\, \text{ a.~\!e. on } \R^n \setminus \Omega \Big\},
$$ 
i.~\!e., the class where we are seeking solutions to the Dirichlet boundary value problem. A few observations are in order. First, a natural assumption for any existence theory is that $\mathcal K_{g,h}(\Omega,\Omega')$ is a non-empty set. This is a property of functions $g$ and $h$. Second, we are assuming that $g$ has bounded fractional Sobolev norm in a set~$\Omega'$ which is strictly containing the set $\Omega$, and not necessarily in the whole~$\R^n$ as previously in the literature.

\vs
\subsection{Existence of solutions}
The obstacle problem can be reformulated as a standard problem in the theory of variational inequalities on Banach spaces, by seeking the energy minimizers in the set of suitable functions defined above. For this, by taking into account the nonlocality of the involved operators here,
it is convenient to define a functional $\mathcal A \colon \mathcal K_{g,h}(\Omega,\Omega') \to \left[W^{s,p}(\Omega')\right]'$ given by
\begin{equation}\label{def_a}
\mathcal Au(v)  := \mathcal A_1u(v) +  \mathcal A_2 u(v)
\end{equation}
for every $u \in \mathcal K_{g,h}(\Omega,\Omega')$ and $v \in W^{s,p}(\Omega')$, where
\[
 \mathcal A_1 u(v) := \int_{\Omega'}\int_{\Omega'} L(u(x),u(y))\big(v(x)-v(y)\big)K(x,y)\dxy
\]
and
\[
 \mathcal A_2 u(v) := 2 \int_{\R^n \setminus \Omega'}\int_{\Omega} L(u(x),g(y))v(x)K(x,y)\dxy.
\]
The motivation for the definitions above is as follows. Assuming that $v \in W_{0}^{s,p}(\Omega)$, and $u \in W^{s,p}(\Omega')$ is such that $u = g$ on $\R^n \setminus \Omega'$, we have that 
\begin{align} \label{eq:weak sol vs A} 
\notag 
& \int_{\R^n}\int_{\R^n} L(u(x),u(y))\big(v(x)-v(y)\big)K(x,y)\dxy
\\* \notag & \qquad = \int_{\Omega'}\int_{\Omega'} L(u(x),u(y))\big(v(x)-v(y)\big)K(x,y)\dxy
\\* \notag & \qquad \quad + 2\int_{\R^n \setminus \Omega'} \int_{\Omega} L(u(x),g(y)) v(x) K(x,y)\dxy.
\\*  & \qquad \equiv  \mathcal A_1 u(v) +  \mathcal A_2 u(v).
\end{align}
In the following we will use the usual brackets, as e.~\!g.~$\langle \mathcal{A}_1(u)-\mathcal{A}_1(w), v\rangle$ to denote $\mathcal{A}_1u(v) -\mathcal{A}_1w(v)$, and so on.

\begin{remark} 
The functional $\mathcal A u$ really belongs to the dual of $W^{s,p}(\Omega')$. Indeed, we have
\begin{align} \label{A1uv}
| \mathcal A_1 u(v) | &\le \int_{\Omega'}\int_{\Omega'} |u(x)-u(y)|^{p-1}|v(x)-v(y)|K(x,y)\dxy \nonumber\\[1ex]
&\le c \left(\int_{\Omega'}\int_{\Omega'} |u(x)-u(y)|^p \frac{{\rm d}x{\rm d}y}{|x-y|^{n+sp}}\right)^\frac{p-1}{p} \nonumber\\
&\qquad \times \left(\int_{\Omega'}\int_{\Omega'} |v(x)-v(y)|^p \frac{{\rm d}x{\rm d}y}{|x-y|^{n+sp}}\right)^\frac{1}{p} \nonumber\\[1ex]
&\le c\,\|u\|_{W^{s,p}(\Omega')}^{p-1} \|v\|_{W^{s,p}(\Omega')}
\end{align}
by H\"older's Inequality. Also,
\begin{align} \label{A2uv}
| \mathcal A_2 u(v)| &\le 2\int_{\R^n \setminus \Omega'}\int_{\Omega} |u(x)-g(y)|^{p-1}|v(x)|K(x,y)\dxy  \nonumber\\[1ex] \nonumber
&\le c \int_{\R^n \setminus \Omega'}\int_{\Omega} |u(x)|^{p-1}|v(x)||x-y|^{-n-sp}\dxy  \nonumber\\
& \quad + c \int_{\R^n \setminus \Omega'}\int_{\Omega} |g(y)|^{p-1}|v(x)||x-y|^{-n-sp}\dxy \nonumber \\[1ex]
&\le c\,r^{-sp} \left(\int_{\Omega} |u(x)|^p \dx\right)^\frac{p-1}{p} \left(\int_{\Omega} |v(x)|^p \dx\right)^\frac{1}{p} \nonumber \\
& \quad + c\,\bigg(\int_{\R^n \setminus \Omega'}|g(y)|^{p-1}|z-y|^{-n-sp}\dy\bigg)\int_{\Omega}|v(x)|\dx \nonumber \\[1ex]
&\le c\,r^{-sp}\Big(\|u\|_{W^{s,p}(\Omega')}^{p-1}+{\rm Tail}(g;z,r)^{p-1} \Big)\|v\|_{W^{s,p}(\Omega')}
\end{align}
holds, where $z \in \Omega$ and $r:=\dist(\Omega,\partial \Omega')>0$, and $c$ depends on $n,p,s,\Lambda,\Omega,\Omega'$. 
\end{remark}

\begin{remark} \label{remark:A2}
In the definition \eqref{def_a}, we could replace $A_2 u(v)$ by
\begin{equation} \label{A2 alternative}
2 \int_{\R^n \setminus \Omega'}\int_{\Omega''} L(u(x),g(y))v(x)K(x,y)\dxy
\end{equation}
for $\Omega''$ satisfying $\Omega \subset \Omega'' \Subset \Omega'$.
Anyway, we need a strictly positive distance between $\partial \Omega''$ and $\partial \Omega'$
to deduce $\mathcal A u \in [W^{s,p}(\Omega')]'$, as seen in the calculations for \eqref{A2uv} above.
\end{remark}

Now, we are ready  to provide the natural definition of solutions to the obstacle problem in the general nonlocal framework considered here. 
\begin{definition}
We say that $u \in \mathcal K_{g,h}(\Omega,\Omega')$ is {\it a solution to the obstacle problem} in $\mathcal K_{g,h}(\Omega,\Omega')$ if
\[
 \mathcal Au(v-u)  \geq 0
\]
whenever $v \in \mathcal K_{g,h}(\Omega,\Omega')$.
\end{definition}
Below, we state and prove the uniqueness of the solution to the obstacle problem and the fact that such a solution is a weak supersolution to~\eqref{problema}. Also, under natural assumptions on the obstacle $h$, one can prove that the solution to the obstacle problem is (fractional) harmonic away from the contact set, in clear accordance with the classical results when $s=1$.
 We have

\begin{theorem} \label{obst prob sol}
There exists a unique solution to the obstacle problem in $\mathcal K_{g,h}(\Omega,\Omega')$. Moreover, the solution to the obstacle problem is a weak supersolution to \eqref{problema} in $\Omega$.
\end{theorem}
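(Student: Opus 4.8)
The plan is to prove existence and uniqueness by recasting the obstacle problem as a variational inequality for a monotone, coercive, weakly continuous operator on the closed convex set $\mathcal K_{g,h}(\Omega,\Omega')$, and then to obtain the supersolution property by testing with admissible perturbations of the form $u+\varphi$, $\varphi\geq 0$.

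First I would verify the structural properties of $\mathcal A$. The set $\mathcal K_{g,h}(\Omega,\Omega')$ is a convex subset of the Banach space $W^{s,p}(\Omega')$, and it is closed: if $u_j\to u$ in $W^{s,p}(\Omega')$ then a subsequence converges a.e., so both constraints $u\geq h$ a.e.\ in $\Omega$ and $u=g$ a.e.\ on $\R^n\setminus\Omega$ pass to the limit. By assumption it is also non-empty. Next, I would show $\mathcal A\colon \mathcal K_{g,h}(\Omega,\Omega')\to [W^{s,p}(\Omega')]'$ is \emph{monotone}, i.e.\ $\langle \mathcal Au-\mathcal Aw,\,u-w\rangle\geq 0$. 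For the $\mathcal A_1$ part this follows from the pointwise inequality $\big(L(a,b)-L(a',b')\big)\big((a-b)-(a'-b')\big)\geq 0$, which is exactly \eqref{a-b positive} applied with the pairs $a-b$ and $a'-b'$, integrated against the positive kernel $K$; here one uses $v=u-w$ so that $v(x)-v(y)=(u(x)-u(y))-(w(x)-w(y))$. For the $\mathcal A_2$ part, since $u=w=g$ on $\R^n\setminus\Omega'$, the term $L(u(x),g(y))-L(w(x),g(y))$ is tested against $u(x)-w(x)$, and this is nonnegative by monotonicity of $L$ in its first argument (noted right after \eqref{def_l}), again integrated against $K\geq 0$. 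Monotonicity, in fact strict monotonicity, will also give uniqueness, so I will keep track of when the inequality is strict. Coercivity: using $\mathcal Au(u-g)=\langle \mathcal Au,\,u-u_0\rangle$ for a fixed $u_0\in\mathcal K_{g,h}$ (e.g.\ any admissible function), one combines the Caccioppoli-type lower bound coming from \eqref{a-b bounds}/\eqref{a-b positive} — which controls $[u]_{W^{s,p}(\Omega')}$ — with the fact that $u-u_0\in W_0^{s,p}(\Omega)$ so a fractional Poincaré inequality controls the $L^p$ norm; the tail term in $\mathcal A_2$ is harmless since $g$ is fixed. Finally, weak continuity (or at least demicontinuity / pseudomonotonicity) of $\mathcal A$: if $u_j\to u$ in $W^{s,p}(\Omega')$, the estimates \eqref{A1uv}–\eqref{A2uv} together with a.e.\ convergence and a uniform integrable bound (via Vitali or dominated convergence, using $L(a,b)\le (|a|+|b|)^{p-1}$) give $\mathcal Au_j(v)\to \mathcal Au(v)$; pseudomonotonicity then follows from this plus monotonicity.

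With these three ingredients, existence of a solution to the variational inequality follows from the classical theory of monotone operators — the Browder–Minty / Lions–Stampacchia theorem for variational inequalities on closed convex sets: a monotone, coercive, pseudomonotone (or demicontinuous) operator from a closed convex subset of a reflexive Banach space into its dual admits a solution of $\langle \mathcal Au,\,v-u\rangle\geq 0$ for all $v$ in the set. Uniqueness: if $u_1,u_2$ are both solutions, test the inequality for $u_1$ with $v=u_2$ and vice versa, add, to get $\langle \mathcal Au_1-\mathcal Au_2,\,u_1-u_2\rangle\leq 0$; combined with monotonicity this forces equality in \eqref{a-b positive} a.e., hence $u_1(x)-u_1(y)=u_2(x)-u_2(y)$ for a.e.\ $(x,y)\in\Omega'\times\Omega'$, and since $u_1=u_2=g$ on $\R^n\setminus\Omega'$ this yields $u_1=u_2$ a.e.

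For the supersolution property: let $\eta\in C_0^\infty(\Omega)$ with $\eta\geq 0$. Then $v:=u+\eta$ belongs to $\mathcal K_{g,h}(\Omega,\Omega')$ (the constraint $u\geq h$ is preserved since we only add a nonnegative function, and $u=g$ outside $\Omega$ is unchanged since $\supp\eta\subset\Omega$). Hence $\mathcal Au(v-u)=\mathcal Au(\eta)\geq 0$. By the computation \eqref{eq:weak sol vs A} — valid because $\eta\in W_0^{s,p}(\Omega)$ and $u=g$ on $\R^n\setminus\Omega'$ — this is precisely $\langle\mathcal Lu,\eta\rangle\geq 0$, which is \eqref{supersolution}; density of such $\eta$ in the nonnegative cone of $W_0^{s,p}(\Omega')$ for any $\Omega'\Subset\Omega$ extends the inequality to the admissible test functions. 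One also checks $u\in W^{s,p}_{\mathrm{loc}}(\Omega)\cap L^{p-1}_{sp}(\R^n)$: the first is immediate from $u\in W^{s,p}(\Omega')$, and the second holds because $u=g\in L^{p-1}_{sp}(\R^n)$ outside the bounded set $\Omega$ while on $\Omega$ it is in $L^p\subset L^{p-1}$.

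I expect the main obstacle to be the verification of the continuity/pseudomonotonicity of $\mathcal A$ with enough uniformity to apply the abstract existence theorem, and in particular handling the passage to the limit in $\mathcal A_2$ — the long-range term — where one must ensure the integrals over $\R^n\setminus\Omega'$ are controlled uniformly using the tail of $g$ and the distance $r=\dist(\Omega,\partial\Omega')>0$, exactly as in \eqref{A2uv}. The other delicate point is making the coercivity estimate clean: one needs the fractional Poincaré inequality on $W_0^{s,p}(\Omega)$ and care that the nonlocal cross terms between $\Omega'$ and its complement, which enter with the fixed data $g$, do not spoil the growth in $\|u\|_{W^{s,p}(\Omega')}$.
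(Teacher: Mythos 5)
Your proposal is correct and follows essentially the same route as the paper: monotonicity, coercivity and weak continuity of $\mathcal A$ on the closed convex set $\mathcal K_{g,h}(\Omega,\Omega')$, existence via the standard theory of monotone operators (the paper cites Corollary III.1.8 of Kinderlehrer--Stampacchia), uniqueness by summing the two variational inequalities, and the supersolution property by testing with $v=u+\phi$ for nonnegative $\phi\in C_0^\infty(\Omega)$ and invoking \eqref{eq:weak sol vs A}. The only cosmetic differences are that the paper verifies weak continuity through the quantitative H\"older-type estimates of Lemmas~\ref{lemma:1<p<2} and~\ref{lemma:p>2} rather than a Vitali/dominated-convergence argument, and that your explanation of why $\langle\mathcal Au_1-\mathcal Au_2,u_1-u_2\rangle\le 0$ forces $u_1=u_2$ is slightly more explicit than the paper's.
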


\begin{corollary} \label{obst prob free}
Let $u$ be the solution to the obstacle problem in $\mathcal K_{g,h}(\Omega,\Omega')$. If $B_r \subset \Omega$ is such that
\[
\essinf_{B_r} (u - h) >0,
\]
then $u$ is a weak solution to \eqref{problema} in $B_r$. In particular, if $u$ is lower semicontinuous and $h$ is upper semicontinuous in $\Omega$, then $u$ is a weak solution to \eqref{problema}
in $\Omega_+:=\big\{x \in \Omega : u(x)>h(x)\big\}$.
\end{corollary}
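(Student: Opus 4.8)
The plan is to upgrade the one-sided information already available — namely that $u$ is a weak supersolution in $\Omega$ (Theorem~\ref{obst prob sol}) — to the full two-sided weak formulation, by feeding into the variational inequality $\mathcal Au(v-u)\ge 0$ competitors of the form $v=u-\varepsilon\eta$, which are admissible precisely because of the strict separation $\essinf_{B_r}(u-h)>0$.

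First I would fix a nonnegative $\eta\in C_0^\infty(B_r)$ and set $v:=u-\varepsilon\eta$. Since $\supp\eta\subset B_r\subset\Omega$, one has $\eta\equiv 0$ on $\R^n\setminus B_r$, so $v=u=g$ a.e.\ on $\R^n\setminus\Omega$ and $v=u\ge h$ a.e.\ on $\Omega\setminus B_r$; on $B_r$, choosing $0<\varepsilon\le\essinf_{B_r}(u-h)/(1+\|\eta\|_{L^\infty(\R^n)})$ forces $v=u-\varepsilon\eta\ge h$ a.e.\ there. Hence $v\in\mathcal K_{g,h}(\Omega,\Omega')$, and the definition of a solution to the obstacle problem gives $0\le\mathcal Au(v-u)=-\varepsilon\,\mathcal Au(\eta)$, that is, $\mathcal Au(\eta)\le 0$. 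Because $\eta\in C_0^\infty(B_r)\subset W_0^{s,p}(\Omega)$ and $u=g$ a.e.\ on $\R^n\setminus\Omega'$, identity~\eqref{eq:weak sol vs A} yields $\mathcal Au(\eta)=\langle\mathcal Lu,\eta\rangle$, so $\langle\mathcal Lu,\eta\rangle\le 0$ for every nonnegative $\eta\in C_0^\infty(B_r)$. On the other hand $\langle\mathcal Lu,\eta\rangle\ge 0$ for the same $\eta$ by the supersolution property of $u$ (Theorem~\ref{obst prob sol}), hence $\langle\mathcal Lu,\eta\rangle=0$ for all nonnegative $\eta\in C_0^\infty(B_r)$. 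Since $\eta\mapsto\langle\mathcal Lu,\eta\rangle$ is linear and every $\eta\in C_0^\infty(B_r)$ can be written as $(\eta+\psi)-\psi$ with $\psi$ and $\eta+\psi$ nonnegative members of $C_0^\infty(B_r)$ (take $\psi:=\|\eta\|_{L^\infty(\R^n)}\phi$ with $\phi\in C_0^\infty(B_r)$, $0\le\phi\le1$, $\phi\equiv1$ near $\supp\eta$), we conclude $\langle\mathcal Lu,\eta\rangle=0$ for all $\eta\in C_0^\infty(B_r)$; that is, $u$ is a weak solution to~\eqref{problema} in $B_r$.

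For the ``in particular'' assertion, note that $u-h$ is lower semicontinuous in $\Omega$, being the difference of a lower semicontinuous and an upper semicontinuous function, so $\Omega_+=\{u>h\}$ is open. Given $\eta\in C_0^\infty(\Omega_+)$, its support is compact in $\Omega_+$; by lower semicontinuity every $x_0\in\supp\eta$ admits a ball $B_{\rho(x_0)}(x_0)\Subset\Omega$ on which $u-h\ge(u(x_0)-h(x_0))/2>0$ pointwise, and finitely many such balls cover $\supp\eta$. Choosing an open set $U$ with $\supp\eta\subset U\Subset\Omega$ contained in this finite union we get $\essinf_U(u-h)>0$, and the argument of the first part applies verbatim with $B_r$ replaced by $U$ (alternatively, use a partition of unity subordinate to the finite cover and add the identities obtained ball by ball); therefore $\langle\mathcal Lu,\eta\rangle=0$, and $u$ is a weak solution to~\eqref{problema} in $\Omega_+$.

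The only genuinely delicate point is checking that the perturbed competitor $v=u-\varepsilon\eta$ really lies in $\mathcal K_{g,h}(\Omega,\Omega')$ — this is exactly where the quantitative hypothesis $\essinf_{B_r}(u-h)>0$, rather than merely $u>h$ a.e., is used — together with the observation, via~\eqref{eq:weak sol vs A}, that the full double integral defining $\mathcal Au(\eta)$ collapses precisely to $\langle\mathcal Lu,\eta\rangle$ once $\eta$ is compactly supported inside $\Omega$; everything else is the linearity of the weak formulation in the test function and an elementary covering argument.
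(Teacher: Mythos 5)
Your proof is correct and follows essentially the same route as the paper: the paper also tests the variational inequality with $v=u-\eps\eta$ for nonnegative $\eta\in C_0^\infty(B_r)$, taking $\eps=\|\eta\|_\infty^{-1}\essinf_{B_r}(u-h)$, to get $\langle\mathcal Au,\eta\rangle\le 0$ and hence the subsolution property via~\eqref{eq:weak sol vs A}, combining it with the supersolution property from Theorem~\ref{obst prob sol}. Your additional spelling-out of the covering argument for the set $\Omega_+$ (which the paper leaves implicit) is a correct and welcome elaboration.
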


\begin{remark}
When solving the obstacle problem in $\mathcal K_{g,-\infty}(\Omega,\Omega')$,
we obtain a unique weak solution to \eqref{problema} in $\Omega$ having the boundary values
$g \in W^{s,p}(\Omega') \cap L^{p-1}_{sp}(\R^{n})$ in $\R^{n} \setminus \Omega$.
This is a generalization of the existence results stated in \cite{DKP15}, where $g \in W^{s,p}(\R^{n})$, and -- as already mentioned in the introduction -- in general of all the analyses of fractional obstacle problems in the previous literature when $\Omega'$ does coincide with the whole~$\R^n$.
\end{remark}

Before going into the related proofs, we need to state and prove some properties of the operator $\mathcal{A}$ defined in \eqref{def_a}. We have the following
\begin{lemma} \label{Amcwc}
The operator $\mathcal A$ is monotone, coercive and weakly continuous on the set~$\mathcal K_{g,h}(\Omega,\Omega')$.
\end{lemma}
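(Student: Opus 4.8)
The plan is to verify the three properties separately, working with the decomposition $\mathcal A = \mathcal A_1 + \mathcal A_2$ and exploiting the monotonicity of $L(a,b)$ in each slot (noted right after \eqref{def_l}) together with the elementary inequality \eqref{a-b positive}.

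\textit{Monotonicity.} I would show $\langle \mathcal A u - \mathcal A w, u - w\rangle \ge 0$ for all $u,w \in \mathcal K_{g,h}(\Omega,\Omega')$. Write $\phi := u - w$; note $\phi = 0$ a.e.\ on $\R^n \setminus \Omega$, and in particular $\phi = 0$ on $\R^n \setminus \Omega'$ and $\phi(x) = u(x) - w(x)$ for $x \in \Omega$. For the $\mathcal A_1$ part, the integrand is
\[
\big(L(u(x),u(y)) - L(w(x),w(y))\big)\big(\phi(x) - \phi(y)\big)K(x,y),
\]
and using $K \ge 0$ together with the fact that $t \mapsto |t|^{p-2}t$ is nondecreasing — equivalently the two-variable inequality one extracts from \eqref{a-b positive} applied to $a = u(x)-u(y)$, $b = w(x)-w(y)$ — this integrand is pointwise nonnegative, since $\phi(x)-\phi(y) = (u(x)-u(y)) - (w(x)-w(y))$. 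For the $\mathcal A_2$ part, for $x \in \Omega$ and $y \in \R^n \setminus \Omega'$ the relevant factor is $\big(L(u(x),g(y)) - L(w(x),g(y))\big)\phi(x)$; since $L$ is increasing in its first argument and $g(y)$ is the common second argument, this has the same sign as $(u(x)-w(x))\phi(x) = \phi(x)^2 \ge 0$. Hence both $\langle \mathcal A_1 u - \mathcal A_1 w, u-w\rangle \ge 0$ and $\langle \mathcal A_2 u - \mathcal A_2 w, u-w\rangle \ge 0$, giving monotonicity. (One should remark in passing that the relevant integrals are finite by the bounds \eqref{A1uv}--\eqref{A2uv}, so the rearrangement into $\mathcal A_1 + \mathcal A_2$ is legitimate.)

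\textit{Coercivity.} Here I would show that $\langle \mathcal A u, u - u_0 \rangle / \|u - u_0\|_{W^{s,p}(\Omega')} \to \infty$ as $\|u\|_{W^{s,p}(\Omega')} \to \infty$ along $u \in \mathcal K_{g,h}(\Omega,\Omega')$, where $u_0$ is any fixed element of $\mathcal K_{g,h}(\Omega,\Omega')$ (nonempty by assumption); equivalently I would bound $\langle \mathcal A u, u - u_0\rangle$ from below by $c_1 [u]_{W^{s,p}(\Omega')}^p - (\text{lower order})$. Writing $\psi := u - u_0$, which again vanishes outside $\Omega$, I would split $\langle \mathcal A u, \psi \rangle = \langle \mathcal A u - \mathcal A u_0, \psi\rangle + \langle \mathcal A u_0, \psi\rangle$. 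The first term is $\ge 0$ by the monotonicity just proved. For the structural lower bound I would instead estimate $\langle \mathcal A u, \psi\rangle$ directly: the $\mathcal A_1$ contribution, after writing $v(x)-v(y)$ with $v = u - u_0$ and using $|u(x)-u(y)|^{p-2}(u(x)-u(y))(u(x)-u(y)) = |u(x)-u(y)|^p$, produces a term comparable to $[u]_{W^{s,p}(\Omega')}^p$ minus a cross term controlled by Young's inequality in terms of $[u_0]^p$ and a small multiple of $[u]^p$; the $\mathcal A_2$ contribution is handled exactly as in \eqref{A2uv}, bounded by $c\, r^{-sp}\big(\|u\|^{p-1}_{W^{s,p}(\Omega')} + \mathrm{Tail}(g;z,r)^{p-1}\big)\|\psi\|_{L^1(\Omega)}$, which is lower order relative to $\|\psi\|_{W^{s,p}(\Omega')}^p$. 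Finally, since $u$ and $u_0$ share the boundary datum $g$ outside $\Omega$, a fractional Poincaré-type inequality on $\Omega' \Subset$ (or directly the fact that $\psi \in W_0^{s,p}(\Omega)$) lets me pass from the Gagliardo seminorm over $\Omega'$ to the full norm $\|\psi\|_{W^{s,p}(\Omega')}$, yielding coercivity after dividing.

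\textit{Weak continuity.} I would take $u_j \rightharpoonup u$ weakly in $W^{s,p}(\Omega')$ with $u_j \in \mathcal K_{g,h}(\Omega,\Omega')$ and show $\mathcal A u_j \rightharpoonup \mathcal A u$ in $[W^{s,p}(\Omega')]'$, i.e.\ $\mathcal A u_j(v) \to \mathcal A u(v)$ for each fixed $v \in W^{s,p}(\Omega')$. By the compact embedding $W^{s,p}(\Omega') \hookrightarrow L^p(\Omega')$ and Rellich-type compactness for the Gagliardo seminorm, along a subsequence $u_j \to u$ in $L^p(\Omega')$ and a.e.\ in $\Omega'$ (and the finite double integral forces $u_j(x) - u_j(y) \to u(x)-u(y)$ a.e.); since every subsequence has a further subsequence along which the conclusion holds, the full sequence converges. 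For $\mathcal A_1 u_j(v)$: the integrand $L(u_j(x),u_j(y))(v(x)-v(y))K(x,y)$ converges a.e., and $|L(u_j(x),u_j(y))|^{p/(p-1)} = |u_j(x)-u_j(y)|^p$ is bounded in $L^1(\Omega'\times\Omega', |x-y|^{-n-sp})$, so $L(u_j(\cdot),u_j(\cdot))$ is bounded in $L^{p/(p-1)}$ of that measure; by the Vitali/equi-integrability argument (or weak-$L^{p/(p-1)}$ convergence, which is enough when tested against the fixed $L^p$ function $(v(x)-v(y))K(x,y)|x-y|^{(n+sp)/p}$) we get convergence of the integral. For $\mathcal A_2 u_j(v)$: dominated convergence applies directly, since $|L(u_j(x),g(y))v(x)|K(x,y) \le c(|u_j(x)|^{p-1}+|g(y)|^{p-1})|v(x)||x-y|^{-n-sp}$, and the $L^p(\Omega)$-convergence of $u_j$ gives an $L^{p}$- hence $L^{p-1}$-convergent (again up to subsequence, a.e.) majorant, while $\int_{\R^n\setminus\Omega'}|g(y)|^{p-1}|x-y|^{-n-sp}\dy$ is finite and the whole thing is integrable over $x\in\Omega$ as in \eqref{A2uv}.

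The main obstacle I expect is the weak continuity of $\mathcal A_1$: weak convergence in $W^{s,p}(\Omega')$ does not by itself control the nonlinear quantity $L(u_j(x),u_j(y))$, so one genuinely needs the compactness step to upgrade to a.e.\ convergence of the differences $u_j(x)-u_j(y)$ plus a uniform $L^{p/(p-1)}$ bound on $L(u_j(\cdot),u_j(\cdot))$ in the weighted space, and then invoke the standard lemma that a.e.\ convergence together with boundedness in $L^q$ ($q>1$) implies weak $L^q$ convergence. Everything else is a bookkeeping exercise using the already-established estimates \eqref{A1uv}--\eqref{A2uv}, the monotonicity of $L$ in each argument, and elementary convexity inequalities; I would also note that monotonicity, coercivity and hemicontinuity (which weak continuity implies) are exactly the hypotheses needed to invoke the Browder–Minty theorem on the closed convex set $\mathcal K_{g,h}(\Omega,\Omega')$ when proving Theorem~\ref{obst prob sol} afterwards.
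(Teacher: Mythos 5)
Your proposal is correct. The monotonicity and coercivity parts are essentially the paper's own argument: monotonicity comes from \eqref{a-b positive} applied separately to $\mathcal A_1$ and $\mathcal A_2$, and coercivity from isolating the positive term $\iint |u(x)-u(y)|^p K(x,y)\dxy$, absorbing the cross terms via H\"older/Young, and using a fractional Poincar\'e--Sobolev inequality for a function vanishing outside $\Omega$ (the paper uses $u-g$, you use $u-u_0$; both lie in $W^{s,p}_0(\Omega)$) to upgrade the Gagliardo seminorm to the full $W^{s,p}(\Omega')$ norm, the remaining terms being $O(\|u\|^{p-1}_{W^{s,p}(\Omega')})$ by \eqref{A1uv}--\eqref{A2uv}. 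The genuine divergence is in the weak continuity. The paper reads ``weakly continuous'' as strong-to-weak$^*$: it assumes $u_j\to u$ in the $W^{s,p}(\Omega')$ \emph{norm} and obtains $\langle \mathcal A u_j-\mathcal A u, v\rangle\to 0$ with explicit rates from the pointwise estimates \eqref{lem_2.2} (for $1<p\le 2$) and \eqref{a-b bounds}, \eqref{p>2aba'b}--\eqref{p>2abab'} (for $p>2$) followed by H\"older's inequality; this is precisely where Lemmas~\ref{lemma:1<p<2} and \ref{lemma:p>2} are used. You instead prove the stronger weak-to-weak$^*$ continuity by compactness: a.e.\ convergence of $u_j$ gives a.e.\ convergence of the ``nonlocal gradients'' $u_j(x)-u_j(y)$ (a structural advantage of the nonlocal setting, with no analogue for $\nabla u_j$ in the local theory), and then boundedness of $L(u_j(x),u_j(y))$ in $L^{p/(p-1)}$ of the measure $|x-y|^{-n-sp}\dxy$ plus the standard ``a.e.\ convergence and an $L^q$ bound, $q>1$, imply weak $L^q$ convergence'' lemma closes the argument. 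Both routes deliver what Corollary~III.1.8 of \cite{KS80} needs. One small caveat on yours: the compact embedding $W^{s,p}(\Omega')\hookrightarrow L^p(\Omega')$ requires $\Omega'$ to be an extension domain, which the paper does not assume, so you should phrase the extraction through local compactness on balls $B\Subset\Omega'$ and a diagonal argument; this still yields a.e.\ convergence on $\Omega'$ and $L^p(\Omega)$ convergence (as $\Omega\Subset\Omega'$), which is all your argument uses, so correctness is unaffected. The paper's quantitative route is more elementary, avoids subsequences entirely, and gives explicit moduli of continuity; yours buys the stronger weak-to-weak$^*$ statement.
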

\begin{proof}
We start with the monotonicity, that is, we show that $\langle \mathcal A u - \mathcal A v, u-v \rangle \ge 0$ holds for every $u,v \in \mathcal K_{g,h}(\Omega,\Omega')$.
To this end, let $u,v \in \mathcal K_{g,h}(\Omega,\Omega')$. We have
\begin{align*}
& \langle \mathcal A_1 u - \mathcal A_1 v, u-v \rangle \\
&\qquad = \int_{\Omega'}\int_{\Omega'} \big(|u(x)-u(y)|^{p-2}(u(x)-u(y))-|v(x)-v(y)|^{p-2}(v(x)-v(y))\big) \\
&\qquad \qquad\qquad \times \big(u(x)-u(y)-v(x)+v(y)\big)K(x,y)\dxy
\end{align*}
and this quantity is nonnegative in view of~\eqref{a-b positive}.
Similarly, for $\mathcal A_2$,
\begin{align*}
&\langle \mathcal A_2 u - \mathcal A_2 v, u-v \rangle \\
&\qquad = 2\int_{\R^n \setminus \Omega'}\int_{\Omega} \Big(|u(x)-g(y)|^{p-2}\big(u(x)-g(y))-|v(x)-g(y)|^{p-2} \\
&\qquad \qquad\qquad \times (v(x)-g(y)\big)\Big)\big(u(x)-g(y)-v(x)+g(y)\big)K(x,y)\dxy \\
&\qquad \ge 0.
\end{align*}
Hence $\mathcal A$ is monotone.

\medskip
Next, we prove the weak continuity. Let $\{u_j\}$ be a sequence in $\mathcal K_{g,h}(\Omega,\Omega')$ converging to $u \in \mathcal K_{g,h}(\Omega,\Omega')$ in $W^{s,p}(\Omega')$. The weak continuity condition is that $\langle \mathcal A u_j - \mathcal Au,v \rangle \to  0$ for all $v \in W^{s,p}(\Omega')$. Thus, let $v \in W^{s,p}(\Omega')$.
Then for $\mathcal A_1$ and $1<p\leq 2$, applying \eqref{lem_2.2} and H\"older's Inequality, we obtain
\begin{align*}
& |\langle \mathcal A_1 u_j - \mathcal A_1 u, v \rangle| \\[1ex]
&\qquad \le \int_{\Omega'}\int_{\Omega'} \big|L(u_j(x),u_j(y))-L(u(x),u(y))\big||v(x)-v(y)|K(x,y)\dxy \\[1ex]
&\qquad \le c\int_{\Omega'}\int_{\Omega'} |u_j(x)-u_j(y)-u(x)+u(y)|^{p-1}|v(x)-v(y)|\frac{{\rm d}x{\rm d}y}{|x-y|^{n+sp}} \\[1ex]
&\qquad \le c\,\|u_j-u\|_{W^{s,p}(\Omega')}^{p-1} \|v\|_{W^{s,p}(\Omega')},
\end{align*}
which vanishes as $j \to \infty$.
On the other hand, when $p>2$, by 
using \eqref{a-b bounds},
we have, again by H\"older's Inequality, that
\begin{align*}
& |\langle \mathcal A_1 u_j - \mathcal A_1 u, v \rangle| \\*[1ex]
&\qquad \le \int_{\Omega'}\int_{\Omega'} \big|L(u_j(x),u_j(y))-L(u(x),u(y))\big||v(x)-v(y)|K(x,y)\dxy \\*[1ex]
&\qquad \le c\int_{\Omega'}\int_{\Omega'} \big(|u_j(x)-u_j(y)|+|u(x)-u(y)|\big)^{p-2} \\*
&\qquad \qquad\qquad \times |u_j(x)-u_j(y)-u(x)+u(y)||v(x)-v(y)|\frac{{\rm d}x{\rm d}y}{|x-y|^{n+sp}} \\[1ex]
&\qquad \le c\int_{\Omega'}\int_{\Omega'} \bigg(\frac{|u_j(x)-u_j(y)|^{p-2}}{|x-y|^{s(p-2)}}+\frac{|u(x)-u(y)|^{p-2}}{|x-y|^{s(p-2)}}\bigg) \\
&\qquad \qquad\qquad \times \frac{|u_j(x)-u_j(y)-u(x)+u(y)|}{|x-y|^{s}}\,\frac{|v(x)-v(y)|}{|x-y|^{s}}\frac{{\rm d}x{\rm d}y}{|x-y|^{n}} \\[1ex]
&\qquad \le c\left(\int_{\Omega'}\int_{\Omega'} \frac{|u_j(x)-u_j(y)|^{p}}{|x-y|^{n+sp}}
+ \frac{|u(x)-u(y)|^{p}}{|x-y|^{n+sp}}\dxy\right)^{\frac{p-2}{p}} \\
&\qquad \qquad \times \left(\int_{\Omega'}\int_{\Omega'} \frac{|u_j(x)-u(x)-u_j(y)+u(y)|^{p}}{|x-y|^{n+sp}}\dxy\right)^{\frac1{p}} \\
&\qquad \qquad  \times \left(\int_{\Omega'}\int_{\Omega'} \frac{|v(x)-v(y)|^{p}}{|x-y|^{n+sp}}\dxy\right)^{\frac1{p}} \\[1ex]
&\qquad \le c\,\big(\|u_j\|_{W^{s,p}(\Omega')}+\|u\|_{W^{s,p}(\Omega')}\big)^{p-2} \|u_j-u\|_{W^{s,p}(\Omega')} \|v\|_{W^{s,p}(\Omega')},
\end{align*}
which vanishes as $j \to \infty$.
Similarly, for $\mathcal A_2$ when $1<p \le 2$, by using again \eqref{lem_2.2}, we have
\begin{align*}
& |\langle \mathcal A_2 u_j - \mathcal A_2 u, v \rangle| \\
&\qquad \le 2\int_{\R^n \setminus \Omega'}\int_{\Omega} \big|L(u_j(x),g(y))-L(u(x),g(y))\big||v(x)|K(x,y)\dxy \\[1ex]
&\qquad \le c\int_{\R^n \setminus \Omega'}\int_{\Omega} |u_j(x)-u(x)|^{p-1}|v(x)||x-y|^{-n-sp}\dxy \\[1ex]
&\qquad \le c\,\|u_j-u\|_{W^{s,p}(\Omega')}^{p-1}\|v\|_{W^{s,p}(\Omega')},
\end{align*}
which tends to $0$ as $j \to \infty$. In the case when $p>2$, by using~\eqref{p>2aba'b}--\eqref{p>2abab'} we get
\begin{align*}
& |\langle \mathcal A_2 u_j - \mathcal A_2 u, v \rangle| \\
&\qquad \le c\int_{\R^n \setminus \Omega'}\int_{\Omega} |u_j(x)-u(x)|^{p-1}|v(x)||x-y|^{-n-sp}\dxy \\
&\qquad \quad + c\int_{\R^n \setminus \Omega'}\int_{\Omega} |u_j(x)-u(x)||u(x)-g(y)|^{p-2}|v(x)||x-y|^{-n-sp}\dxy \\[1ex]
&\qquad \le c\int_{\Omega} |u_j(x)-u(x)|^{p-1}|v(x)|\dx + c\int_{\Omega} |u_j(x)-u(x)||u(x)|^{p-2}|v(x)|\dx \\
&\qquad \quad + c\,\bigg(\int_{\R^n \setminus \Omega'}|g(y)|^{p-2}|z-y|^{-n-sp}\dy\bigg) \int_{\Omega} |u_j(x)-u(x)||v(x)|\dx \\[1ex]
&\qquad \le c\,\|u_j-u\|_{W^{s,p}(\Omega')}^{p-1}\|v\|_{W^{s,p}(\Omega')} \\
&\qquad \quad + c\,\|u_j-u\|_{W^{s,p}(\Omega')}\|u\|_{W^{s,p}(\Omega')}^{p-2}\|v\|_{W^{s,p}(\Omega')} \\
&\qquad \quad + c\,r^{-sp}\,{\rm Tail}(g;z,r)^{p-2}\|u_j-u\|_{W^{s,p}(\Omega')}\|v\|_{W^{s,p}(\Omega')},
\end{align*}
which again tends to $0$ as $j \to \infty$. 
Notice that in the display above the nonlocal integral has been estimated as follows
\begin{align*}
& \int_{\R^n \setminus \Omega'}|g(y)|^{p-2}|z-y|^{-n-sp}\dy \\
&\qquad \le \bigg(\int_{\R^n \setminus \Omega'}|g(y)|^{p-1}|z-y|^{-n-sp}\dy\bigg)^\frac{p-2}{p-1}
\bigg(\int_{\R^n \setminus \Omega'}|z-y|^{-n-sp}\dy\bigg)^\frac{1}{p-1} \\[1.5ex]
&\qquad \le c\,r^{-sp}\,{\rm Tail}(g;z,r)^{p-2},
\end{align*}
where $z \in \Omega$ and $r:=\dist(\Omega,\partial \Omega')>0$. Thus, $\langle \mathcal A u_j,v \rangle \to \langle \mathcal A u, v \rangle$ for every $v \in W^{s,p}(\Omega')$ as $j \to \infty$, i.~\!e., the weak continuity holds.

\medskip
Finally, we prove the coercivity, which means that there exists a function $v \in \mathcal K_{g,h}(\Omega,\Omega')$ such that
\[
\frac{\langle \mathcal A u - \mathcal A v, u-v \rangle}{\|u-v\|_{W^{s,p}(\Omega')}} \to \infty \quad \text{as} \quad \|u\|_{W^{s,p}(\Omega')} \to \infty.
\]
Since we are assuming that $\mathcal K_{g,h}(\Omega,\Omega')$ is non-empty, there is at least one $v \in \mathcal K_{g,h}(\Omega,\Omega')$. Let this be fixed. By \eqref{A1uv} and \eqref{A2uv} we see that
\begin{align} \label{AuAvv}
|\langle \mathcal A u - \mathcal A v, v \rangle| \le c\,\|u\|_{W^{s,p}(\Omega')}^{p-1}+c,
\end{align}
where the constant $c$ is independent of $u$. We now show that the contribution from $\langle \mathcal A u - \mathcal A v, u \rangle$ dominates when $\|u\|_{W^{s,p}(\Omega')}$ is large.
For $\mathcal A_1$, we obtain
\begin{align}\label{A1uA1vu}
&\langle \mathcal A_1 u - \mathcal A_1 v, u \rangle \nonumber\\
&\qquad= \int_{\Omega'}\int_{\Omega'} \big(L(u(x),u(y))-L(v(x),v(y))\big)\big(u(x)-u(y)\big)K(x,y)\dxy \nonumber\\[1ex]
&\qquad\ge \frac1{c} \int_{\Omega'}\int_{\Omega'} \frac{|u(x)-u(y)|^{p}}{|x-y|^{n+sp}} \dxy \nonumber\\
&\qquad\quad -c \int_{\Omega'}\int_{\Omega'} |v(x)-v(y)|^{p-1}|u(x)-u(y)|\frac{{\rm d}x{\rm d}y}{|x-y|^{n+sp}} \nonumber\\[1ex]
&\qquad\ge  
\frac 1c \left[ u-g \right]_{W^{s,p}(\Omega')}^{p} - c \left[ g \right]_{W^{s,p}(\Omega')}^{p} 
 \nonumber\\
&\qquad\quad -c \int_{\Omega'}\int_{\Omega'} |v(x)-v(y)|^{p-1}|u(x)-u(y)|\frac{{\rm d}x{\rm d}y}{|x-y|^{n+sp}} \nonumber\\[1ex]
&\qquad\ge \frac1{c} \|u-g\|_{W^{s,p}(\Omega')}^{p}-c\,\|g\|_{W^{s,p}(\Omega')}^{p} - c\,\|v\|_{W^{s,p}(\Omega')}^{p-1}\|u\|_{W^{s,p}(\Omega')} \nonumber \\
&\qquad\ge \frac1{c} \|u\|_{W^{s,p}(\Omega')}^{p}-c\,\|g\|_{W^{s,p}(\Omega')}^{p} - c\,\|v\|_{W^{s,p}(\Omega')}^{p-1}\|u\|_{W^{s,p}(\Omega')},
\end{align}
by using in particular H\"older's Inequality and the fractional Sobolev embeddings (see for instance~\cite[Section~6]{DPV12}, and also \cite[Appendix 6.3]{PSV13} for a simple proof). 
For~$\mathcal A_2$, in turn, we obtain
\begin{align}\label{A2uA2vu}
&\langle \mathcal A_2 u - \mathcal A_2 v, u \rangle \nonumber\\
&\qquad = 2\int_{\R^n \setminus \Omega'}\int_{\Omega} \big(L(u(x),g(y))-L(v(x),g(y))\big)\big(u(x)-v(x)\big){K}(x,y)\dxy \nonumber \\ 
&\qquad \quad +\, 2\int_{\R^n \setminus \Omega'}\int_{\Omega} \big(L(u(x),g(y))-L(v(x),g(y))\big)v(x){K}(x,y)\dxy\nonumber \\[1ex]
&\qquad \geq   -2\int_{\R^n \setminus \Omega'}\int_{\Omega} \big|L(u(x),g(y))-L(v(x),g(y))\big||v(x)|{K}(x,y)\dxy \nonumber\\[1ex]
&\qquad \geq  -c \int_{\R^n \setminus \Omega'}\int_{\Omega} \Big(|u(x)|^{p-1}|v(x)|+|g(y)|^{p-1}|v(x)|+|v(x)|^{p}\Big)\frac{{\rm d}x{\rm d}y}{|x-y|^{n+sp}} \nonumber\\[1ex]
&\qquad \geq  -c\,\|u\|_{L^p(\Omega')}^{p-1}\|v\|_{L^p(\Omega')} - c\,r^{-sp}{\rm Tail}(g;z,r)^{p-1}\|v\|_{L^p(\Omega')} - \|v\|_{L^p(\Omega')}^p 
\end{align}
with $z\in\Omega$ and $r:=\dist(\Omega,\partial\Omega')$, where we also used that the term on the second line is nonnegative by the monotonicity.
By combining the estimates \eqref{AuAvv}, \eqref{A1uA1vu} with \eqref{A2uA2vu}, it yields
\begin{align*}
\langle \mathcal A u - \mathcal A v, u-v \rangle \ge \frac1{c} \|u\|_{W^{s,p}(\Omega')}^{p} - c\,\|u\|_{W^{s,p}(\Omega')}^{p-1}-c\,\|u\|_{W^{s,p}(\Omega')}-c,
\end{align*}
for a constant $c$ independent of $u$, and thus
\begin{align*}
\frac{\langle \mathcal A u - \mathcal A v, u-v \rangle}{\|u-v\|_{W^{s,p}(\Omega')}}
\to \infty \quad \text{as} \quad \|u\|_{W^{s,p}(\Omega')} \to \infty.
\end{align*}
This finishes the proof.
\end{proof}

Now, we are ready to prove the existence of a unique solution to the obstacle problem.

\begin{proof}[\bf Proof of Theorem~\ref{obst prob sol}]
We first notice that $\mathcal K_{g,h}(\Omega,\Omega') \subset W^{s,p}(\Omega')$ is nonempty, closed and convex. Also, in view of Lemma~\ref{Amcwc} the operator $\mathcal A$ is monotone, coercive and weakly continuous on $\mathcal K_{g,h}(\Omega,\Omega')$. This will permit us to apply the standard theory of monotone operators (see, for instance, Corollary III.1.8 in~\cite{KS80}, or~\cite{HKM06}) in order to deduce the existence of a function~$u \in \mathcal K_{g,h}(\Omega,\Omega')$ such that
\[
\langle \mathcal Au,v-u \rangle \ge 0,
\]
whenever $v \in \mathcal K_{g,h}(\Omega,\Omega')$. In order to show  the uniqueness, suppose that there are two functions $u_1$ and $u_2$ solving the obstacle problem. As a consequence, 
\[
\langle \mathcal Au_1,u_2-u_1 \rangle \ge 0 \quad \text{and} \quad \langle \mathcal Au_2,u_1-u_2 \rangle \ge 0,
\]
and then, by summing the preceding inequalities, we obtain
\[
\langle \mathcal Au_1-\mathcal Au_2,u_1-u_2 \rangle \le 0.
\]
This is possible only if $u_1=u_2$ almost everywhere. Thus, the solution $u$ is unique.

Now we show that the function $u$ is a weak supersolution to \eqref{problema} in $\Omega$.
First, clearly $u \in W^{s,p}_{\rm loc}(\Omega) \cap L^{p-1}_{sp}(\R^{n})$.
Then, notice that for any given nonnegative function $\phi \in C_0^{\infty}(\Omega )$, the function $v:=u+\phi$ belongs to $\mathcal K_{g,h}(\Omega,\Omega')$. Consequently, we have
as in~\eqref{eq:weak sol vs A}  that 
\begin{align*}
0 \le \langle \mathcal Au, \phi \rangle  
&= \int_{\R^{n}} \int_{\R^{n}}L(u(x),u(y))\big(\phi(x)-\phi(y)\big)K(x,y)\dxy.
\end{align*}
Thus, $u$ is a weak supersolution in $\Omega$.
\end{proof}

\begin{proof}[\rm \bf Proof of Corollary \ref{obst prob free}]
By Theorem \ref{obst prob sol} $u$ is a weak supersolution  
in $B_r \subset \Omega$. To show that $u$ is also a weak subsolution in $B_r$, let $\eta \in C_0^{\infty}(B_r)$ be a nonnegative test function that is not identically $0$. Set 
$\eps := \|\eta\|_\infty^{-1} \essinf_{B_r}(u-h)>0$. Then $v = u - \eps \eta \in \mathcal K_{g,h}(\Omega,\Omega')$ and $\langle \mathcal Au,v-u \rangle \geq 0$ yields $\langle \mathcal Au,\eta \rangle \leq 0$. Therefore, by~\eqref{eq:weak sol vs A} we obtain that $u$ is a weak subsolution in $B_r$, and thus a weak solution there. 
\end{proof}

The solution to the obstacle problem is the smallest supersolution above the obstacle in the following sense.
\begin{proposition} \label{smallest super}
Let $\Omega \Subset \Omega'' \subset \Omega'$. Let $u$ be the solution to the obstacle problem in $\mathcal K_{g,h}(\Omega,\Omega')$ and let $v$ be a weak supersolution in $\Omega''$ such that $\min\{u,v\} \in \mathcal K_{g,h}(\Omega,\Omega')$. Then $u \leq v$ almost everywhere.
\end{proposition}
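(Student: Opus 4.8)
The plan is to play the two variational conditions off against each other by inserting the single auxiliary function $\eta:=(u-v)_+=u-\min\{u,v\}$ into both. First I would dispatch the function-space bookkeeping, which is the only genuinely delicate point. Fix an intermediate open set with $\Omega\Subset\Omega'_0\Subset\Omega''$. Since $u\in W^{s,p}(\Omega')$ and $v\in W^{s,p}_{\rm loc}(\Omega'')$, the difference $u-v$, and hence its truncation $\eta=(u-v)_+$, lies in $W^{s,p}(\Omega'_0)$. Moreover, as both $u$ and $\min\{u,v\}$ coincide with $g$ a.e.\ on $\R^n\setminus\Omega$, one gets $v\ge u$ a.e.\ there, so $\eta=0$ a.e.\ outside $\Omega$ and therefore has a representative in $W_0^{s,p}(\Omega)$ (by the density remark of Section~\ref{sec_preliminaries}). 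Thus $\eta$ is a nonnegative test function which is admissible both in the identity \eqref{eq:weak sol vs A} for $u$ and in the weak supersolution inequality \eqref{supersolution} for $v$ on $\Omega''$ (recall $\Omega\Subset\Omega''$).

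Next I would extract the two inequalities. Using the competitor $\min\{u,v\}=u-\eta\in\mathcal K_{g,h}(\Omega,\Omega')$ in the definition of the obstacle problem, together with the linearity of $\mathcal{A}u(\cdot)$, gives $0\le\mathcal{A}u(-\eta)$, i.e.\ $\mathcal{A}u(\eta)\le0$; by \eqref{eq:weak sol vs A} this is $\langle\mathcal{L}u,\eta\rangle\le0$. Since $v$ is a weak supersolution in $\Omega''$, we also have $\langle\mathcal{L}v,\eta\rangle\ge0$ --- the pairing being finite because $\eta$ is supported in $\Omega$ while $v\in W^{s,p}(\Omega'_0)\cap L^{p-1}_{sp}(\R^n)$. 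Subtracting,
\[
\langle\mathcal{L}u-\mathcal{L}v,\eta\rangle=\int_{\R^n}\int_{\R^n}\big[L(u(x),u(y))-L(v(x),v(y))\big]\big(\eta(x)-\eta(y)\big)K(x,y)\dxy\le0 .
\]

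Finally I would close with a pointwise-positivity computation in the spirit of the monotonicity part of Lemma~\ref{Amcwc}. Writing $A:=\{u>v\}$ and splitting the domain of integration according to whether $x,y\in A$, one checks --- using that $a\mapsto L(a,b)$ is nondecreasing and $b\mapsto L(a,b)$ nonincreasing, together with the symmetry of $K$ --- that the integrand is $\ge0$ a.e.; combined with the inequality above it must then vanish a.e. Restricting to the block $A\times(\R^n\setminus\Omega)$, where $u(y)=v(y)=g(y)$ so that $\eta(x)-\eta(y)=u(x)-v(x)>0$ and $K(x,y)\ge\Lambda^{-1}|x-y|^{-n-sp}>0$, the integrand equals $\big[L(u(x),g(y))-L(v(x),g(y))\big]\big(u(x)-v(x)\big)K(x,y)$, which is strictly positive everywhere on $A\times(\R^n\setminus\Omega)$ by the strict monotonicity of $t\mapsto|t|^{p-2}t$. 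As this set must be null and $|\R^n\setminus\Omega|=\infty$, we conclude $|A|=0$, that is $u\le v$ a.e.

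The step I expect to be the main obstacle is the admissibility bookkeeping: one must verify simultaneously that $\eta=(u-v)_+$ vanishes outside $\Omega$ --- so that it is a legitimate competitor for the obstacle problem and an interior test function for $v$, which is only a supersolution on $\Omega''$ and not on $\Omega'$ --- and that it carries the right fractional Sobolev regularity on the intermediate set $\Omega'_0$. It is precisely here that the hypothesis $\min\{u,v\}\in\mathcal K_{g,h}(\Omega,\Omega')$, rather than separate membership of $u$ or $v$, is used; everything afterwards is the routine monotonicity argument already exploited above.
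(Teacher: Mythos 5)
Your argument is correct and is essentially the paper's own proof: you test the obstacle inequality with the competitor $\min\{u,v\}$ and the supersolution property of $v$ with the nonnegative function $u-\min\{u,v\}\in W^{s,p}_0(\Omega)$, combine the two, and conclude via the strict monotonicity of $t\mapsto|t|^{p-2}t$. The only cosmetic difference is that you verify pointwise nonnegativity of the integrand on all of $\R^n\times\R^n$ and then localize to $\{u>v\}\times(\R^n\setminus\Omega)$, whereas the paper splits the integral into the blocks $\{u>v\}\times\{u>v\}$ and $\{u\le v\}\times\{u>v\}$; both yield $|\{u>v\}|=0$ by the same mechanism.
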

\begin{proof}
Since $u$ is the solution to the obstacle problem and $\min\{u,v\}=u$ in $\R^{n}\setminus\Omega$,
\begin{align} \label{uminuv}
0 &\leq \langle \mathcal Au, \min\{u,v\}-u \rangle \\*
&= \int_{\R^{n}}\int_{\R^{n}} L(u(x),u(y))\Big(\min\{u,v\}(x)-u(x)-\min\{u,v\}(y)+u(y)\Big) \nonumber \\*
& \qquad \qquad \quad \times K(x,y)\dxy \nonumber.
\end{align}
Since $v$ is a weak supersolution in $\Omega''$ and $u-\min\{u,v\} \in W^{s,p}_0(\Omega)$ is nonnegative, we have
\begin{align} \label{vminuv}
0 &\leq \int_{\R^{n}}\int_{\R^{n}} L(v(x),v(y))\Big(u(x)-\min\{u,v\}(x)-u(y)+\min\{u,v\}(y)\Big) \nonumber \\
& \qquad \qquad \quad \times K(x,y)\dxy.
\end{align}
Summing the preceding inequalities \eqref{uminuv} and \eqref{vminuv}, we obtain
\begin{align*}
0 &\leq \int_{\R^{n}}\int_{\R^{n}} \big(L(v(x),v(y))-L(u(x),u(y))\big) \\*
&\qquad\qquad\qquad \times \big(u(x)-\min\{u,v\}(x)-u(y)+\min\{u,v\}(y)\big)K(x,y)\dxy \\[1ex]
&= \int_{\{u > v\}} \int_{\{u>v\}}\big(L(v(x),v(y))-L(u(x),u(y))\big) \\
&\qquad\qquad\qquad \times \big(u(x)-v(x)-u(y)+v(y)\big)K(x,y)\dxy \\
&\quad + 2\int_{\{u \leq v\}} \int_{\{u>v\}}\big(L(v(x),v(y))-L(u(x),u(y))\big)
\big(u(x)-v(x)\big)K(x,y)\dxy \\*[1ex]
& \leq 0
\end{align*}
since the first term is nonpositive by \eqref{a-b positive},
whereas in the second term, $L(v(x),v(y))<L(u(x),u(y))$ and $u(x)>v(x)$.
Consequently, $|\{u>v\}|=0$.
\end{proof}

\medskip

\section{Interior regularity}\label{sec_regularity}
In this section, we state and prove that the regularity of the solution to the obstacle problem inherits the regularity of the obstacle, both in the case of boundedness and (H\"older) continuity. This is in clear accordance with the by-now classical results for the obstacle problems in the pure fractional Laplacian $(-\Delta)^s$ case, as seen e.~\!g. in~\cite{ACS08,CSS08,Sil07}, via the Dirichlet-to-Neumann extension.
However, our approach here is different and, though we are dealing with a wider class of nonlinear integro-differential operators with coefficients, the proofs are  new and even simpler, since we can make effort of the recent quantitative estimates presented in the previous sections and in~\cite{DKP14,DKP15}, by taking care of the nonlocal tail quantities.

\begin{theorem} \label{thm:obs bnd}
Let $u$  be the solution to the obstacle problem in $\mathcal{K}_{g,h}(\Omega,\Omega')$. Assume that $B_r(x_0) \subset \Omega'$ and set 
\[
M :=\max\bigg\{ \esssup_{B_r(x_0) \cap \Omega}h ,  \esssup_{B_r(x_0) \setminus \Omega} g \bigg\}.
\]
Here the interpretation is that $ \esssup_{A} f = -\infty$ if $A = \emptyset$. 
If $M$ is finite, then $u$ is essentially bounded from above in $B_{r/2}(x_0)$  and 
\begin{equation} \label{eq_obs_bnd}
\esssup_{B_{r/2}(x_0)}(u-m)_+ \leq  \delta\, {\rm Tail}((u-m)_+;{x_0},r/2)+c\, \delta^{-\gamma} \left(\mean{B_r(x_0)} (u-m)_+^t\,{\rm d}x\right)^{\frac 1t}
\end{equation}
holds for all $m\geq M$, $t \in (0,p)$ and $\delta \in (0,1]$ with constants $\gamma \equiv \gamma(n,p,s,t)$ and $c \equiv c(n,p,s,t,\Lambda)$. 
\end{theorem}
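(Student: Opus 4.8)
The plan is to establish the local boundedness estimate \eqref{eq_obs_bnd} via a De Giorgi-type iteration, adapted to the nonlocal setting following the scheme of \cite{DKP14,DKP15}, with the crucial observation that the solution to the obstacle problem is a \emph{supersolution} but also satisfies a one-sided testing inequality against nonnegative truncations \emph{as long as the competitor stays above the obstacle}. Concretely, fix $m \ge M$ and write $w := (u-m)_+$. I would first verify the admissibility of the test function: for a cutoff $\tau \in C_0^\infty(B_r(x_0))$ and for levels $k \ge 0$, the function $v := u - \tau^p (u-m-k)_+$ still lies in $\mathcal{K}_{g,h}(\Omega,\Omega')$. This is where the hypothesis on $M$ enters decisively: on $B_r(x_0)\cap\Omega$ we have $u - \tau^p(u-m-k)_+ \ge \min\{u, m+k\} \ge \min\{u,m\} \ge \min\{u,h\} = h$ a.e., since $m \ge h$ there; and on $\R^n\setminus\Omega$ the modification vanishes outside $B_r(x_0)$ while inside $B_r(x_0)\setminus\Omega$ we again use $m \ge g$ to keep $v = g$ untouched where it matters — actually $v=u=g$ on $\R^n \setminus \Omega$ because $(u-m-k)_+ = (g-m-k)_+ = 0$ there. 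Hence $\langle \mathcal{A}u, v-u\rangle \ge 0$, i.e. $\langle \mathcal{A}u, \tau^p(u-m-k)_+\rangle \le 0$, which by \eqref{eq:weak sol vs A} is exactly the Caccioppoli-type testing inequality for supersolutions applied to $-u$, or rather: $w$ satisfies the same energy inequality a subsolution of \eqref{problema} would. So morally $w$ is a nonnegative function satisfying the Caccioppoli inequality with tail term, just as in \cite[Theorem~1.1]{DKP14}.

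Next I would run the standard iteration. Introduce for $j = 0,1,2,\dots$ the radii $r_j := (1+2^{-j})r/2$, the balls $B_j := B_{r_j}(x_0)$, the levels $k_j := (1-2^{-j})k_\infty$ for a threshold $k_\infty > 0$ to be chosen, and $w_j := (u - m - k_j)_+$. Using the Caccioppoli inequality established above with cutoff between $B_{j+1}$ and $B_j$, together with the fractional Sobolev–Poincaré inequality and the tail estimate (controlling the long-range contribution of $w_j$ by $\mathrm{Tail}(w;x_0,r/2)$ plus lower-order averages, exactly as in \cite[Lemma~4.1, Proof of Thm~1.1]{DKP15}), one derives a recursive inequality of the form
\begin{equation*}
A_{j+1} \le c\, b^{j}\, \Big( \mathrm{Tail}(w;x_0,r/2) + k_\infty^{-\kappa}\, \text{stuff} \Big)\, A_j^{1+\beta}
\end{equation*}
for suitable $b>1$, $\beta>0$, where $A_j := \big(\mean{B_j} w_j^t\big)^{1/t}$ normalized appropriately. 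The fast-geometric-convergence lemma then forces $A_j \to 0$, provided $k_\infty$ is taken of the size of the right-hand side of \eqref{eq_obs_bnd}; tracking the dependence on the free parameter $\delta$ (which weights the tail term against the average term) yields the claimed form with the power $\delta^{-\gamma}$. The passage from exponent $t=p$ (natural from the energy) down to arbitrary $t\in(0,p)$ is done by the usual interpolation/Young trick absorbing a small fraction of the sup into the left-hand side, which is legitimate once we know $u$ is locally bounded above — so I would first prove the estimate for some fixed $t$, deduce qualitative boundedness, and then upgrade to all $t\in(0,p)$.

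The main obstacle I anticipate is \emph{not} the iteration itself — that machinery is essentially in place in \cite{DKP14,DKP15} — but rather the careful bookkeeping of the nonlocal tail contributions under the obstacle constraint, and in particular making sure the test function modification never pushes the competitor below $h$ \emph{nor} alters the boundary datum $g$, which is precisely why $M$ must dominate \emph{both} $\esssup h$ over $B_r\cap\Omega$ \emph{and} $\esssup g$ over $B_r\setminus\Omega$. A secondary delicate point is that, unlike for genuine subsolutions, here the inequality $\langle \mathcal{A}u, \tau^p(u-m-k)_+\rangle \le 0$ is only available because the truncation level $m+k$ lies above the obstacle; if one truncated below $h$ the competitor would leave $\mathcal{K}_{g,h}$ and the inequality would fail. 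Once this is recognized, the remaining estimates — splitting $\int\int$ over $B_j\times B_j$, $B_j\times(B_r\setminus B_j)$, and $B_j\times(\R^n\setminus B_r)$, bounding the cross terms using $|x-y|\gtrsim 2^{-j}r$ and the algebraic inequalities \eqref{lem_2.2}, \eqref{p>2aba'b}, \eqref{a-b bounds} — are routine and parallel to the cited works, so I would not reproduce them in detail.
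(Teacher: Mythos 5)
Your proposal is correct and follows essentially the same route as the paper: the admissibility of the competitor $v=u-\phi^p(u-m-k)_+$ (which hinges on $m\ge M$ dominating both $h$ on $B_r\cap\Omega$ and $g$ on $B_r\setminus\Omega$) yields exactly the subsolution-type Caccioppoli inequality with tail for $(u-m)_+$, after which the paper likewise invokes the De Giorgi machinery of~\cite{DKP14,DKP15} and a covering/absorption argument (Young's inequality with exponents $p/t$ and $p/(p-t)$, recentered tails, and the standard iteration lemma) to lower the exponent to $t\in(0,p)$ and produce the $\delta^{-\gamma}$ dependence. No gaps.
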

\begin{proof}
Suppose that $M<\infty$. Letting $k\geq 0$, $m\geq M$,  and $\phi \in C_0^\infty(B_r(x_0))$, $0 \leq \phi \leq 1$, we see that $v = u-m - (u-m-k)_+ \phi^p$ belongs to $\mathcal{K}_{g-m,h-m}(\Omega,\Omega')$ and that $u_m := u-m$ solves the corresponding obstacle problem. Thus we have that 
\begin{align*}
&\int_{\R^n} \int_{\R^n} L(u_m(x),u_m(y))  \big( (u_m(x) {-}k)_+ \phi^p(x) {-} (u_m(y){-}k)_+ \phi^p(y)\big)\nonumber \\
& \qquad \quad \, \ \times K(x,y) \dxy\  \leq\ 0.
\end{align*}
As observed in the proof of~\cite[Theorem 1.4]{DKP15}, this is enough to prove first a Caccioppoli-type estimate with tail, and subsequently a local boundedness result (see~\cite[Theorem 1.1]{DKP15}) which yields
\begin{equation} \label{sup obs 1}
\esssup_{B_{\rho/2}(y)}(u_m)_+ \leq \tilde \delta\, {\rm Tail}((u_m)_+;{y},\rho/2)+c\, \tilde \delta^{-\tilde\gamma} \left(\mean{B_\rho(y)} (u_m)_+^p\dx\right)^{\frac 1p},
\end{equation}
whenever $B_\rho(y) \subset B_r(x_0)$, 
for any  $\tilde \delta \in (0,1]$, and with positive $\tilde\gamma$ depending only on $n,p,s$ and $c$ only on $n,p,s,\Lambda$. Now, a covering argument, which goes back to the one  in the proof of~\cite[Theorem 1.1]{DKP14}, will allow us to prove that~\eqref{sup obs 1} actually implies~\eqref{eq_obs_bnd}. For this, set $\rho=(\sigma-\sigma')r$ with $1/2\leq \sigma'<\sigma\leq 1$, and take $y \in \sigma'B\equiv B_{\sigma' r}(x_0)$. We can estimate the nonlocal contribution in~\eqref{sup obs 1} as follows
\begin{align}\label{tailm}
& {\rm Tail}( (u_m)_+; y, \rho/2)^{p-1} \nonumber \\[1ex]
& \qquad  \leq \left(\frac{\rho}{2}\right)^{sp} \sup_{\sigma B} (u_m)_+^{p-1} \int_{\sigma B\setminus B_{\rho/2}(y)}|x-y|^{-n-sp}\dx  
\nonumber\\
&  \qquad  \quad  + \left(
\frac{\rho}{2}\right)^{sp}
 \sup_{x\in \R^\n\setminus\sigma B}\left(\frac{|x-x_0|}{|x-y|}\right)^{n+sp}
\int_{\R^n\setminus \sigma B} (u_m)_+^{p-1}|x-x_0|^{-n-sp}\dx \nonumber \\[1ex]
& \qquad   \leq \ c\,\sup_{\sigma B} (u_m)_+^{p-1}
+ c\,(\sigma-\sigma')^{-{n}}{\rm Tail}((u_m)_+; x_0, r/2)^{p-1}.
\end{align}
For what concerns the local contribution in~\eqref{sup obs 1}, we can  apply Young's Inequality (with exponents $p/t$ and $p/(p-t)$) to get
\begin{align*}
\tilde \delta^{-\tilde\gamma}\left( \mean{B_{\rho}(y)} (u_m)_+^{p}\dx \right)^{\frac{1}{p}}
&\leq \tilde \delta^{-\tilde\gamma} \sup_{B_{\rho}(y)}(u_m)_+^{\frac{p-t}{p}} \left( \mean{B_{\rho}(y)} (u_m)_+^{t}\dx \right)^{\frac{1}{p}} \\[1ex]
&\leq \frac{1}{4}\sup_{\sigma B} (u_m)_+ 
+ c\,\tilde \delta^{-\frac{\tilde\gamma p}{t}}\left( \mean{B_{\rho}(y)} (u_m)_+^t\dx \right)^{\frac{1}{t}} \\[1ex]
&\leq \frac{1}{4}\sup_{\sigma B} (u_m)_+
+ c\,\tilde \delta^{-\frac{\tilde\gamma p}{t}}
(\sigma-\sigma')^{-\frac{n}{t}}
\left( \mean{B_r} (u_m)_+^t\dx \right)^{\frac{1}{t}}.
\end{align*}
Thus, by reabsorbing with $\tilde \delta\leq 1/4c$ we deduce by three last displays that
\begin{align*}
\sup_{\sigma' B}(u_m)_+ &\leq
\frac{1}{2}\sup_{\sigma B} (u_m)_+
+ c\,\tilde \delta^{-\frac{\tilde\gamma p}{t}} (\sigma-\sigma')^{-\frac{n}{t}}\left( \mean{B_r} (u_m)_+^t\dx \right)^{\frac{1}{t}} \\
&\quad +\, c\,\tilde \delta (\sigma-\sigma')^{-\frac{n}{p-1}}{\rm Tail}((u_m)_+; x_0, r/2),
\end{align*}
so that finally a standard iteration argument, see for instance~\cite[Lemma 3.38]{HKM06}, and choosing $\tilde \delta = \delta/c$ will give the desired result~\eqref{eq_obs_bnd}.
\end{proof}

The solution to the obstacle problem inherits the continuity of the obstacle. 

\begin{theorem} \label{thm:obs H cont}
Suppose that $h$ is locally H\"older continuous in $\Omega$. Then the solution $u$ to the obstacle problem in $\mathcal{K}_{g,h}(\Omega,\Omega')$ is locally H\"older continuous in $\Omega$ as well.
Moreover, for every $x_0 \in \Omega$ there is $r_0>0$ such that
\begin{align} \label{eq:obs cont} 
\osc_{B_\rho(x_0)} u & \leq c \left(\frac{\rho}{r}\right)^{\alpha} \left[ {\rm Tail}(u -h(x_0);{x_0},r) + \bigg(\mean{B_{r}(x_0)} |u-h(x_0)|^p \dx \bigg)^{\frac 1p}\right]  
\\ & \quad \nonumber + c \int_{\rho}^r \left(\frac{\rho}{t}\right)^{\alpha} \omega_h\left(  \frac{t}{\sigma} \right) \, \frac{dt}{t}
\end{align}
for every $r\in (0,r_0)$ and $\rho \in (0,r/4]$, where $\omega_h(\rho) \equiv \omega_h(\rho,x_0) := \osc_{B_{\rho}(x_0)} h$,
and $\alpha$, $c$ and $\sigma$ depend only on $n$, $p$, $s$, and $\Lambda$.
\end{theorem}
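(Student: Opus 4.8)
The plan is to reduce the H\"older continuity of $u$ to an oscillation-decay estimate proven by iteration over dyadic balls, mimicking the strategy for weak supersolutions in~\cite{DKP14} but carefully tracking the interaction between the solution, the obstacle, and the tail. Fix $x_0 \in \Omega$ and a small radius $r_0$ so that $B_{r_0}(x_0) \Subset \Omega$ and $h$ is H\"older continuous there. For $r \in (0,r_0)$ consider the quantity
\[
\mathcal E(\rho) := \osc_{B_\rho(x_0)} u + \textrm{Tail}(u - h(x_0); x_0, \rho),
\]
possibly with a correction term measuring the oscillation of $h$, and aim to prove a decay of the form $\mathcal E(\rho/4) \le \lambda\, \mathcal E(\rho) + c\,\omega_h(\rho/\sigma)$ for some $\lambda \in (0,1)$, from which the claimed logarithmic–Dini-type sum over $\int_\rho^r (\rho/t)^\alpha \omega_h(t/\sigma)\,dt/t$ follows by a standard summation lemma.

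The first step is the \emph{lower} part of the oscillation control. On $B_\rho(x_0) \cap \Omega$ we have $u \ge h \ge h(x_0) - \omega_h(\rho)$, so from below $u$ is already essentially controlled by the obstacle up to $\omega_h$; hence essentially only the behaviour of $\osc$ coming from \emph{above} must be handled by PDE arguments. The second and main step is therefore to obtain an oscillation \emph{reduction from above}. Here I would use that $u$ is a weak supersolution in $\Omega$ (Theorem~\ref{obst prob sol}), so the measure-theoretic ``growth lemma'' / De Giorgi-type alternative from~\cite{DKP14} applies: either $u$ is close to its supremum only on a small portion of $B_{\rho/2}$, in which case the supersolution logarithmic lemma and a Caccioppoli-with-tail estimate force $\sup_{B_{\rho/4}} u$ to drop by a fixed fraction of the previous oscillation (plus tail and $\omega_h$ terms), or $u$ stays below a level on a large portion, in which case this same level bound propagates inward. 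The new subtlety compared with the pure supersolution case is that, inside the contact set, $u$ need not be a subsolution, so the upper bound must be extracted solely from supersolution information together with the upper obstacle bound $u \le \max\{\sup_{B_r \cap \Omega} h, \sup_{B_r\setminus\Omega} g\}$-type reasoning near $x_0$; this is where the hypothesis on $h$ (local H\"older continuity, hence boundedness) and Theorem~\ref{thm:obs bnd} enter to guarantee $u$ is locally bounded and that the relevant sup's are finite.

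The third step is the bookkeeping of the tail. Passing from scale $\rho$ to $\rho/4$, the tail $\textrm{Tail}(u - h(x_0);x_0,\rho/4)$ must be re-expanded as an integral over $B_\rho \setminus B_{\rho/4}$ plus the previous tail at scale $\rho$; the annulus piece is bounded by $c\,\osc_{B_\rho} u + c\,\omega_h(\rho)$ as in~\eqref{tailm}, while the far piece contributes the old tail with a good geometric constant, so that the combined quantity $\mathcal E$ indeed contracts. Iterating and summing the geometric series produces the $(\rho/r)^\alpha$ prefactor multiplying the first bracket in~\eqref{eq:obs cont}, while the accumulated $\omega_h$ errors at all intermediate scales sum to the integral term $\int_\rho^r (\rho/t)^\alpha \omega_h(t/\sigma)\,dt/t$; finally the zeroth-scale tail $\textrm{Tail}(u-h(x_0);x_0,r)$ and the average $(\mean{B_r}|u-h(x_0)|^p)^{1/p}$ bound $\mathcal E(r)$ via Theorem~\ref{thm:obs bnd}. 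The main obstacle I anticipate is precisely the \emph{upper} oscillation reduction: one does not have a subsolution, so the usual two-sided De Giorgi iteration is unavailable, and one must argue that the supersolution property, combined with the obstacle constraint and the already-established local boundedness, still yields a one-sided measure-shrinking lemma strong enough to close the iteration — this is the point where the cited estimates of~\cite{DKP14,DKP15} have to be invoked in a slightly nonstandard, ``obstacle-aware'' form.
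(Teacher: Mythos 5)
Your overall architecture (an oscillation--plus--tail quantity $\mathcal E(\rho)$, a one-step decay estimate with an $\omega_h$ error, tail re-expansion over annuli as in~\eqref{tailm}, and a summation lemma producing the integral term) matches the paper, and you correctly identify the central difficulty: inside the contact set $u$ is only a supersolution, so the supremum cannot be lowered by the usual two-sided machinery. But the step you propose to resolve this --- a De Giorgi-type measure alternative in which ``the supersolution logarithmic lemma and a Caccioppoli-with-tail estimate force $\sup_{B_{\rho/4}}u$ to drop'' --- is not a working mechanism as stated. The logarithmic lemma for supersolutions propagates \emph{lower} level-set information (it is an ingredient of the weak Harnack/expansion-of-positivity machinery); it does not lower the supremum. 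The sup-estimate of Theorem~\ref{thm:obs bnd} is admissible only for levels $m\geq \sup h$, and to make its right-hand side small you still need the $L^t$-average of $(u-m)_+$ to be small --- your sketch never produces that smallness.

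The paper closes this gap with two ideas you are missing. First, a dichotomy: if $x_0$ is \emph{not} a contact point, i.e. $\essinf_{B_{r_0}}(u-h)>0$ for some $r_0$, then Corollary~\ref{obst prob free} makes $u$ a genuine weak solution near $x_0$ and the interior H\"older estimate of~\cite{DKP15} applies verbatim. Second, at a contact point one sets $d:=h(x_0)-\omega_h(r)$, so that $u_d:=u-d\geq 0$ is a nonnegative supersolution in $B_r(x_0)$, and applies the \emph{weak Harnack inequality} of~\cite[Theorem~1.2]{DKP14}: $\bigl(\mean{B_{2\rho}}u_d^t\bigr)^{1/t}\leq c\inf_{B_{4\rho}}u_d + c(\rho/r)^{sp/(p-1)}{\rm Tail}((u_d)_-;x_0,r)$. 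The contact condition forces $\inf_{B_\rho}u_d\leq\omega_h(r)$, so the average of $u_d$ is controlled by $\omega_h(r)$ plus a geometrically small tail; feeding this into \eqref{eq_obs_bnd} at the level $m=d+2\omega_h(r)\geq\sup_{B_{2\rho}}h$ then pins $\sup u_d$ (hence the full oscillation, since $u_d\geq 0$) down to the same size. In other words, the upper bound comes not from a measure alternative but from the chain ``contact $\Rightarrow$ small infimum $\Rightarrow$ (weak Harnack) small average $\Rightarrow$ (sup bound) small supremum.'' A correctly executed two-sided alternative could in principle substitute for this --- one would take the midpoint level $k=(\sup u+\inf u)/2$ and observe that $k\geq\sup_{B_\rho}h$ whenever $\osc_{B_\rho}u\geq 4\omega_h(\rho)$ (the only case that matters) --- but you would need to set that up explicitly, including the inf-raising branch, which your sketch omits entirely.
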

\begin{proof}
Let us first analyze the contact set, by which we mean that $x_0$ belongs to the contact set if and only if for every $r \in (0,R)$, $R:= \dist(x_0,\partial \Omega)$, we have
\[
\inf_{B_{r}(x_0)} (u-h) = 0.
\]
Our first goal is to show that for any such point $x_0$ and for any
$r \in (0,R)$ we find $\sigma \in (0,1)$ and $c$, both depending only on $n,p,s,\Lambda$, such that  
\begin{align} \label{eq:osc decay 000}
& \nonumber \osc_{B_{\sigma r}(x_0)} u + {\rm Tail}(u - h(x_0);{x_0},\sigma r) 
\\ & \qquad \leq \frac12  \left(\osc_{B_{r}(x_0)} u + {\rm Tail}(u - h(x_0);{x_0},r) \right)  + c\,\omega_h(r).
\end{align}
To this end, observe first that $u \geq d: = h(x_0) - \omega_h(r)$ almost everywhere in $B_r(x_0)$. Set $u_d := u-d$, which is then a nonnegative weak supersolution in $B_r(x_0)$ by Theorem~\ref{obst prob sol}. Now apply Theorem~\ref{thm:obs bnd} and the weak Harnack estimate in \cite[Theorem 1.2]{DKP14}. We obtain by \eqref{eq_obs_bnd} (applied with $m= d + 2\omega_h(r) \geq \sup_{B_{2\rho}(x_0)}h$) that 
\begin{equation} \label{eq:sup u_k 0}
\sup_{B_\rho(x_0)} u_d  \leq  2\omega_h(r) + \delta\,{\rm Tail}((u_d)_+ ;{x_0},\rho)+c\, \delta^{-\gamma} \left(\mean{B_{2\rho}(x_0)} u_d^t\dx \right)^{\frac 1t}
\end{equation}
for $\rho \in(0,r]$, $t\in(0,p)$ and $\delta\in(0,1]$, and the weak Harnack gives
\[
\left(\mean{B_{2\rho}(x_0)} u_d^t\dx\right)^{\frac 1t} \leq c \inf_{B_{4\rho}(x_0)} u_d + c \left(\frac{\rho}{r}\right)^{\frac{sp}{p-1}}{\rm Tail}( (u_d)_-;{x_0},r)
\]
whenever $\rho \in(0,r/4]$.
Since $\inf_{B_{\rho}(x_0)}u_d \leq \omega_h(r)$ due to $\essinf_{B_{\rho}(x_0)} (u-h) = 0$, we obtain from the previous display that 
\[
\left(\mean{B_{2\rho}(x_0)} u_d^t\dx\right)^{\frac 1t} \leq c\,\omega_h(r) +c \left(\frac{\rho}{r}\right)^{\frac{sp}{p-1}} {\rm Tail}(u_d;{x_0},r).
\]
Thus, recalling that $u_d \geq 0$ in $B_r(x_0)$, we arrive at
\[
\osc_{B_\rho(x_0)} u \leq c\,\delta^{-\gamma} \omega_h(r) + c\,\delta\,{\rm Tail}(u_d;{x_0},\rho)
+  c\,\delta^{-\gamma} \left(\frac{\rho}{r}\right)^{\frac{sp}{p-1}} {\rm Tail}(u_d;{x_0},r).
\]
Now we observe that
\begin{equation} \label{eq:tail u_k}
{\rm Tail}(u_d;{x_0},\rho) \leq c \sup_{B_{r}(x_0)} |u_d|  + c \left(\frac{\rho}{r}\right)^{\frac{sp}{p-1}} {\rm Tail}(u_d;{x_0},r),
\end{equation}
where we can estimate 
\begin{equation} \label{eq:sup u_k}
\sup_{B_r(x_0)} |u_d|=\sup_{B_r(x_0)} |u-h(x_0)+\omega_h(r)| \leq \osc_{B_{r}(x_0)} u+2 \omega_h(r).
\end{equation}
Now, for any $\eps \in (0,1)$, we can first choose $\delta$  small and then $\widetilde \sigma \in (0,1)$, correspondingly, so that we have
\[
c\, \delta \leq \frac{\eps}{2} \qquad \mbox{and} \qquad c\,\delta^{-\gamma} \widetilde \sigma^{\frac{sp}{p-1}} \leq \frac{\eps}{2},
\]
and therefore, for $\rho = \widetilde \sigma r$,
\begin{equation} \label{eq:osc tildesigmarho}
\osc_{B_{\widetilde \sigma r}(x_0)} u \leq \eps \left(\osc_{B_{r}(x_0)} u + {\rm Tail}(u - h(x_0);{x_0},r) \right)  + c_\eps\, \omega_h(r)
\end{equation}
holds. Using this together with \eqref{eq:tail u_k}, we further have that for any $\sigma \in (0,\widetilde \sigma)$
\begin{align*} 
{\rm Tail}(u - h(x_0);{x_0},\sigma r) & \leq c  \osc_{B_{\tilde \sigma r}(x_0)} u +c \left(\frac{\sigma}{\widetilde \sigma}\right)^{\frac{sp}{p-1}} {\rm Tail}(u - h(x_0);{x_0},\widetilde \sigma r)
\\* & \leq c\, \eps \left(\osc_{B_{r}(x_0)} u + {\rm Tail}(u - h(x_0);{x_0},r) \right)  + c\, c_\eps\, \omega_h(r) \\*
&\quad + c \left(\frac{\sigma}{\widetilde \sigma}\right)^{\frac{sp}{p-1}} \left(\osc_{B_r(x_0)}u + {\rm Tail}(u - h(x_0);{x_0},r)\right).
\end{align*}
Therefore, adding \eqref{eq:osc tildesigmarho} and taking $\sigma$ and $\eps$ so small that
\[
c \left(\frac{\sigma}{\widetilde \sigma}\right)^{\frac{sp}{p-1}} \leq \eps \qquad \text{and} \qquad (c+2)\,\eps \leq \frac12,
\]
yields \eqref{eq:osc decay 000}.

Next, iterating \eqref{eq:osc decay 000} we obtain
\begin{align} \label{eq:osc decay 001}
& \nonumber \osc_{B_{\sigma^k r}(x_0)} u + {\rm Tail}(u - h(x_0);{x_0},\sigma^k r) 
\\ & \qquad  \leq 2^{1-k} \left(\osc_{B_{\sigma r}(x_0)} u + {\rm Tail}(u - h(x_0);{x_0},\sigma r) \right)  + c \sum_{j=0}^{k-2} 2^{-j} \omega_h(\sigma^{k-j-1} r)
\end{align}
for any $k \in \mathbb N$.
Using finally the fact $\osc_{B_r} u = \osc_{B_r} u_d \leq \sup_{B_r} u_d$ and the supremum estimate \eqref{eq:sup u_k 0},  
we conclude the contact set analysis with
\begin{align} \label{eq:osc decay 002}
& \nonumber \osc_{B_{\sigma^k r}(x_0)} u + {\rm Tail}(u - h(x_0);{x_0},\sigma^k r) 
\\ & \qquad  \leq c\, 2^{1-k} \left( {\rm Tail}(u -h(x_0);{x_0},r) + \bigg(\mean{B_{r}(x_0)} |u-h(x_0)|^t\,{\rm d}x\bigg)^{\frac 1t}\right) \\
& \qquad\quad + c \sum_{j=0}^{k-1} 2^{-j} \omega_h(\sigma^{k-j-1} r). \nonumber
\end{align}
Notice here that if $h$ is continuous and uniformly bounded in $B_r$, then
\begin{equation*} \label{eq:omega_h sum}
\lim_{k\to \infty} \sum_{j=0}^{k-1} 2^{-j} \omega_h(\sigma^{k-j-1} r) = 0,
\end{equation*}
implying that $\lim_{r \to 0} \osc_{B_{r}(x_0)} u = 0$ in this case. 

We then analyze the continuity properties outside of the contact set. In this case we find $r_0 \in (0,R)$ such that 
$$
\inf_{B_{r_0}(x_0)} (u-h) > 0.
$$
Then Corollary \ref{obst prob free} says that $u$ is a weak solution in $B_{r_0}(x_0)$, and consequently we can use the results in~\cite{DKP15}, by also noticing that in the proofs there it makes no difference to assume
$u \in W^{s,p}_{\rm loc}(\Omega) \cap L^{p-1}_{sp}(\R^{n})$ instead of $u \in W^{s,p}(\R^{n})$.
In particular, \cite[Theorem 1.2]{DKP15} implies that
\begin{equation*}  \label{eq:osc decay 003}
\osc_{B_{\rho}(x_0)} u \leq c \left(\frac{\rho}{r}\right)^{\alpha} \left({\rm Tail}(u-h(x_0);{x_0},r) +  \bigg(\mean{B_{r}(x_0)} | u -h(x_0)|^p\dx\bigg)^{\frac 1p} \right).
\end{equation*}
for every $r \in (0,r_0)$ and $\rho \in (0,r/2]$.
The claim  
follows from this and \eqref{eq:osc decay 002} (with $\alpha \leq -\log 2 / \log \sigma$) after straightforward manipulations. 
\end{proof}

Slightly modifying the proof above, we easily obtain the following.
\begin{theorem} \label{thm:obs cont}
Suppose that $h$ is continuous in $\Omega$. Then the solution to the obstacle problem in $\mathcal{K}_{g,h}(\Omega,\Omega')$ is continuous in $\Omega$ as well.
\end{theorem}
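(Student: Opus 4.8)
The plan is to mimic, almost verbatim, the proof of Theorem~\ref{thm:obs H cont}, the only structural change being that the modulus $\omega_h(\rho,x_0)=\osc_{B_\rho(x_0)}h$ is no longer comparable to a power of $\rho$, but is merely a bounded modulus of continuity. So I would fix $x_0\in\Omega$, set $R:=\dist(x_0,\partial\Omega)$, and split into the two cases already used there: either $x_0$ is a contact point, i.e.\ $\essinf_{B_r(x_0)}(u-h)=0$ for every $r\in(0,R)$, or it is not.

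For a contact point $x_0$, I would invoke the oscillation-decay estimate \eqref{eq:osc decay 000} together with its iterated form \eqref{eq:osc decay 002}, exactly as derived in the proof of Theorem~\ref{thm:obs H cont}: nothing in their derivation uses the decay rate of $\omega_h$, only Theorem~\ref{thm:obs bnd} and the weak Harnack inequality of~\cite{DKP14}. Since $h$ is continuous at $x_0$ we have $\omega_h(\rho,x_0)\to0$ as $\rho\to0$, and since $h$ is continuous (hence locally bounded) one has $\sup_{0<\rho\le R/2}\omega_h(\rho,x_0)<\infty$. The key --- and essentially only new --- point is then to check that the weighted sum appearing on the right-hand side of \eqref{eq:osc decay 002}, namely $\sum_{j=0}^{k-1}2^{-j}\,\omega_h(\sigma^{k-j-1}r,x_0)$, tends to $0$ as $k\to\infty$. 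This is a routine Ces\`aro-type estimate: given $\eps>0$, pick $N$ with $\omega_h(\sigma^m r,x_0)\le\eps$ for all $m\ge N$; the terms of the sum with $k-1-j\ge N$ contribute at most $\eps\sum_{j\ge0}2^{-j}=2\eps$, while the at most $N$ remaining terms are killed by the factor $2^{-(k-N)}$ together with the boundedness of $\omega_h$, so the $\limsup$ in $k$ is $\le2\eps$ for every $\eps$. Letting $k\to\infty$ in \eqref{eq:osc decay 002} and using that $\rho\mapsto\osc_{B_\rho(x_0)}u$ is nondecreasing, I obtain $\lim_{\rho\to0}\osc_{B_\rho(x_0)}u=0$.

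For a point $x_0$ that is not a contact point there is $r_0\in(0,R)$ with $\essinf_{B_{r_0}(x_0)}(u-h)>0$, so Corollary~\ref{obst prob free} gives that $u$ is a weak solution to \eqref{problema} in $B_{r_0}(x_0)$; the interior H\"older regularity for weak solutions from~\cite[Theorem~1.2]{DKP15} (which, as observed in the proof of Theorem~\ref{thm:obs H cont}, applies verbatim to $u\in W^{s,p}_{\rm loc}(\Omega)\cap L^{p-1}_{sp}(\R^n)$) again yields $\lim_{\rho\to0}\osc_{B_\rho(x_0)}u=0$. Since the two cases are exhaustive and $x_0\in\Omega$ was arbitrary, the precise representative of $u$ is continuous throughout $\Omega$.

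The main obstacle is the passage to the limit $k\to\infty$ in \eqref{eq:osc decay 002}: in the H\"older setting that sum is a convergent geometric-type series handled automatically, whereas here one needs the elementary but genuine fact that a geometrically decaying weight against a vanishing (bounded) modulus still produces a vanishing sum. Everything else is literally the argument of Theorem~\ref{thm:obs H cont}.
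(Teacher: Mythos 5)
Your proof is correct and follows essentially the same route as the paper: the paper deduces the result directly from the quantitative estimate \eqref{eq:obs cont} of Theorem~\ref{thm:obs H cont} by checking that $\int_{\rho}^r (\rho/t)^{\alpha}\,\omega_h(t/\sigma)\,\frac{dt}{t}\to 0$ as $\rho\to0$ when $\omega_h$ is a bounded vanishing modulus, which is exactly the integral form of your Ces\`aro estimate for the discrete sum $\sum_j 2^{-j}\omega_h(\sigma^{k-j-1}r)$ (a remark the paper already makes inside the proof of Theorem~\ref{thm:obs H cont}). Your contact/non-contact dichotomy and the treatment of the non-contact case via Corollary~\ref{obst prob free} coincide with the paper's argument.
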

\begin{proof}
This plainly follows from the previous theorem, since if $\omega_h(t) \to 0$ as $t\to 0$ and $\omega_h$ is locally uniformly bounded, then it is easy to check that 
$$
\int_{\rho}^r \left(\frac{\rho}{t}\right)^{\alpha} \omega_h\left(  \frac{t}{\sigma} \right) \, \frac{dt}{t} \to 0 
$$
as $\rho \to 0$ for small enough $r$. 
\end{proof}

\medskip

\section{Boundary regularity}\label{sec_boundary}

We continue our investigation by considering the regularity of the solution to the obstacle problem on the boundary of $\Omega$. 
In what follows, we assume $x_0 \in \partial \Omega$.
Firstly, we would need a Caccioppoli-type estimate with tail, whose proof is
a verbatim repetition of the proof of~\cite[Theorem~1.4]{DKP15} after noticing that $v = u \mp w_\pm \phi^p$, $\phi \in C_0^\infty(B_r(x_0))$, $0\leq \phi \leq 1$, belongs to $\mathcal{K}_{g,h}(\Omega,\Omega')$  for all indicated $k_+$ and $k_-$. For other fractional Caccioppoli-type inequalities, though not taking into account the tail contribution, see~\cite{Min07,Min11,FP14}. We have the following
\begin{lemma} \label{lemma:cacc bnd}
Suppose that $u \in \mathcal{K}_{g,h}(\Omega,\Omega')$ solves the obstacle problem in $\mathcal{K}_{g,h}(\Omega,\Omega')$.
Let $x_0 \in \partial\Omega$, let $r \in (0,r_0)$ with $r_0 := \dist(x_0,\partial \Omega')$, and suppose that 
\[
k_+ \geq \max\bigg\{ \esssup_{B_r(x_0)} g, \esssup_{B_r(x_0) \cap \Omega} h  \bigg\} \quad \text{and} \quad
k_- \leq \essinf_{B_r(x_0)} g.
\]
Then, for $w_+ := (u - k_+)_+$ and $w_- := (k_--u)_+$, we have 
\begin{eqnarray}\label{cacio1}
\nonumber && \int_{B_r(x_0)}\int_{B_r(x_0)} |w_{\pm}(x)\phi(x)-w_{\pm}(y)\phi(y)|^p K(x,y) \dxy \\[1ex]
 &&\qquad   \leq c\int_{B_r(x_0)}\int_{B_r(x_0)} w_{\pm}^p(x) |\phi(x)-\phi(y)|^p K(x,y) \dxy \\
&&\qquad  \quad+\,c \int_{B_r(x_0)}w_{\pm}(x)\phi^p(x)\dx \left(\sup_{y\,\in\,{\rm supp}\,\phi}\int_{\mathds{R}^n\setminus B_r(x_0)} w_{\pm}^{p-1}(x)K(x,y)\dx \right), 
\nonumber
\end{eqnarray}
whenever $\phi \in C_0^\infty(B_r(x_0))$ and $0\leq \phi \leq 1$.
\end{lemma}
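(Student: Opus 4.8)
The plan is to test the obstacle problem variational inequality with the competitor $v = u \mp w_\pm \phi^p$ and then rearrange the resulting inequality into the Caccioppoli form, carefully splitting the double integral over $\R^n\times\R^n$ into the ``local'' piece on $B_r(x_0)\times B_r(x_0)$ and the ``tail'' pieces involving $\R^n\setminus B_r(x_0)$. First I would verify the admissibility claim already indicated in the statement: for $w_+ = (u-k_+)_+$ and $\phi\in C_0^\infty(B_r(x_0))$, $0\le\phi\le 1$, the function $v = u - w_+\phi^p$ still lies in $\mathcal{K}_{g,h}(\Omega,\Omega')$, because subtracting $w_+\phi^p$ only decreases $u$ where $u > k_+$, and since $k_+ \ge \esssup_{B_r(x_0)\cap\Omega} h$ and $k_+\ge\esssup_{B_r(x_0)}g$, the constraint $v\ge h$ in $\Omega$ and $v = g$ on $\R^n\setminus\Omega$ are preserved (the latter because $\phi$ is compactly supported in $B_r(x_0)$, and on $\R^n\setminus\Omega$ we have $u = g \le k_+$ wherever $\phi\ne 0$). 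Symmetrically, $v = u + w_-\phi^p$ with $w_- = (k_--u)_+$ and $k_-\le\essinf_{B_r(x_0)}g$ is admissible. Hence $\mathcal{A}u(v-u)\ge 0$, i.e. $\mp\langle\mathcal{A}u, w_\pm\phi^p\rangle \ge 0$, which by \eqref{eq:weak sol vs A} reads
\[
\pm\int_{\R^n}\int_{\R^n} L(u(x),u(y))\big(w_\pm(x)\phi^p(x)-w_\pm(y)\phi^p(y)\big)K(x,y)\dxy \le 0.
\]

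Next I would carry out the algebraic heart of the argument, which is exactly the computation in the proof of \cite[Theorem~1.4]{DKP15}. Focusing on $w_+$ (the $w_-$ case being analogous, applied to $-u$), one splits the domain of integration: on $B_r\times B_r$ one uses the pointwise inequality relating $L(u(x),u(y))(w_+(x)\phi^p(x)-w_+(y)\phi^p(y))$ from below by a multiple of $|w_+(x)\phi(x)-w_+(y)\phi(y)|^p$ minus an error of the form $w_+^p(\cdot)|\phi(x)-\phi(y)|^p$, exploiting that $L$ is increasing in the first and decreasing in the second slot together with the elementary convexity estimates; this is where the superquadratic/subquadratic dichotomy and the inequalities \eqref{a-b bounds}, \eqref{lem_2.2}, \eqref{p>2aba'b}--\eqref{p>2abab'} come into play, though for just this Caccioppoli inequality one mainly needs the one-sided bound $|L(a,b)|\le |a-b|^{p-1}$ and the standard ``$\phi^p$-trick''. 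On the cross terms $x\in B_r$, $y\in\R^n\setminus B_r$ (and symmetrically), $\phi(y)=0$, so the integrand becomes $L(u(x),u(y))w_+(x)\phi^p(x)K(x,y)$; here I would bound $L(u(x),u(y))$ from below, using that on the support of $\phi$ one has $u(x)\ge k_+$ there where $w_+(x)>0$ and splitting $u(y)$ according to whether $u(y)\ge k_+$ or not, to produce the tail term $w_+(x)\phi^p(x)\int_{\R^n\setminus B_r} w_+^{p-1}(x)K(x,y)\,{\rm d}x$ with the supremum over $y\in\supp\phi$. The region $x,y\in\R^n\setminus B_r$ contributes nothing since $\phi$ vanishes there.

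Then I would collect terms: moving the good term $\int_{B_r}\int_{B_r}|w_+(x)\phi(x)-w_+(y)\phi(y)|^p K\dxy$ to the left and the error and tail terms to the right gives precisely \eqref{cacio1}, with the constant $c$ depending only on $p$ and $\Lambda$ (the latter entering when one passes between $K(x,y)$ and $|x-y|^{-n-sp}$, though since the statement keeps $K$ explicit this may not even be needed). The main obstacle, such as it is, is the bookkeeping in the $p>2$ case of the pointwise convexity inequality on $B_r\times B_r$ — ensuring the error term genuinely has the clean form $w_+^p(x)|\phi(x)-\phi(y)|^p$ with no leftover cross-contributions — but this is entirely standard and identical to \cite{DKP15}, so I would simply invoke that computation rather than reproduce it. A secondary point worth stating carefully is why $v = u\mp w_\pm\phi^p$ is admissible as a \emph{competitor} in the obstacle problem (not merely as a test function), since this is what lets us use the inequality $\mathcal{A}u(v-u)\ge 0$ rather than an equality; I addressed this above, and it is the only place where the obstacle constraint (as opposed to pure supersolution testing) is used.
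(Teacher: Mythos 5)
Your proposal is correct and follows essentially the same route as the paper, which likewise reduces the lemma to checking that $v = u \mp w_\pm\phi^p$ is an admissible competitor in $\mathcal K_{g,h}(\Omega,\Omega')$ (using $k_+ \geq \esssup_{B_r(x_0)} g$, $k_+ \geq \esssup_{B_r(x_0)\cap\Omega} h$, resp.\ $k_- \leq \essinf_{B_r(x_0)} g$, so that $w_\pm\phi^p$ vanishes outside $\Omega$ and the obstacle constraint is preserved) and then invokes verbatim the Caccioppoli computation of \cite[Theorem~1.4]{DKP15}. Your admissibility verification is in fact spelled out in more detail than in the paper, and the sign bookkeeping $\pm\langle\mathcal Au, w_\pm\phi^p\rangle \le 0$ is handled correctly.
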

\begin{remark} \label{remark:cacc contact}
If the maximum $\max\{ \esssup_{B_r(x_0)} g, \esssup_{B_r(x_0) \cap \Omega} h \}$ is infinite,
or $\essinf_{B_r(x_0)} g = -\infty$,  
then the interpretation is that there is no test function of the type $w_+$ or $w_-$, respectively. 
\end{remark}

When the obstacle and the boundary values are bounded on the boundary, so is the solution to the obstacle problem.

\begin{theorem} \label{thm:boundednessx0}
Suppose that $u \in \mathcal{K}_{g,h}(\Omega,\Omega')$ solves the obstacle problem in $\mathcal{K}_{g,h}(\Omega,\Omega')$.
Let $x_0 \in \partial\Omega$ and suppose that 
\[
\max\bigg\{ \esssup_{B_r(x_0)} g, \esssup_{B_r(x_0) \cap \Omega} h  \bigg\}  < \infty \quad \text{and} \quad \essinf_{B_r(x_0)} g > -\infty
\]
for $r \in (0,r_0)$ with $r_0 := \dist(x_0,\partial \Omega')$. Then $u$ is essentially bounded close to $x_0$. 
\end{theorem}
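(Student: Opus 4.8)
The plan is to localize near $x_0$ and iterate the Caccioppoli-type estimate of Lemma~\ref{lemma:cacc bnd} exactly as in the interior boundedness proof (Theorem~\ref{thm:obs bnd}), the only new feature being that the truncation levels $k_\pm$ must be chosen using the boundary data $g$ rather than the solution itself, and that the complement integral in \eqref{cacio1} must be handled as a tail term. Fix $r\in(0,r_0)$ and set
\[
k_+ := \max\bigg\{ \esssup_{B_r(x_0)} g, \esssup_{B_r(x_0)\cap\Omega} h\bigg\},\qquad k_-:=\essinf_{B_r(x_0)} g,
\]
both finite by hypothesis; then $w_+=(u-k_+)_+$ and $w_-=(k_--u)_+$ are admissible in Lemma~\ref{lemma:cacc bnd}. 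Note that $w_+$ vanishes a.e.\ on $B_r(x_0)\setminus\Omega$ (there $u=g\le k_+$) and $w_+=0$ on the contact set inside $\Omega$, so $w_+\in W^{s,p}_0$-type on the relevant ball; similarly $w_-$ vanishes on $B_r(x_0)\setminus\Omega$. This is what makes $w_\pm\phi^p$ legitimate test perturbations keeping us inside $\mathcal K_{g,h}(\Omega,\Omega')$, and it is the mechanism by which the boundary datum enters.

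First I would record that, because $w_+$ is supported in $\Omega\cap B_r(x_0)$ up to the complement, the right-hand side of \eqref{cacio1} is controlled by the same quantities as in the interior case: the first term by $\osc$-type $L^p$ masses of $w_+$ against $|\phi(x)-\phi(y)|^p|x-y|^{-n-sp}$, and the second term by $\|w_+\phi^p\|_{L^1}$ times a supremum of $\int_{\R^n\setminus B_r(x_0)} w_+^{p-1}(x)|x-y|^{-n-sp}\dx$, i.e.\ essentially ${\rm Tail}(w_+;x_0,r)^{p-1}$ up to harmless geometric constants when $y\in\supp\phi\Subset B_r(x_0)$. From here the argument is the De Giorgi iteration used for \cite[Theorem~1.1]{DKP15}: combine \eqref{cacio1} with the fractional Sobolev inequality on $B_\rho(x_0)$, sum over a dyadic sequence of levels and radii, and absorb the tail contribution with a small parameter $\delta$, exactly as in the passage from the displayed inequality to \eqref{sup obs 1} in the proof of Theorem~\ref{thm:obs bnd}. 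Running the iteration for $w_+$ gives an essential upper bound for $u$ near $x_0$ of the form
\[
\esssup_{B_{r/2}(x_0)} u \ \le\ k_+ + \delta\,{\rm Tail}\big((u-k_+)_+;x_0,r/2\big) + c\,\delta^{-\gamma}\bigg(\mean{B_r(x_0)}(u-k_+)_+^p\dx\bigg)^{1/p},
\]
and the tail on the right is finite since $u\in L^{p-1}_{sp}(\R^n)$ by membership in $\mathcal K_{g,h}(\Omega,\Omega')$ together with $g\in L^{p-1}_{sp}(\R^n)$. Running the same iteration for $w_-=(k_--u)_+$ gives the corresponding essential lower bound $\essinf_{B_{r/2}(x_0)} u \ge k_- - (\text{finite})$. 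Together these show $u$ is essentially bounded in $B_{r/2}(x_0)$, which is the assertion.

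The main obstacle I expect is purely bookkeeping: verifying carefully that $v=u\mp w_\pm\phi^p$ really lies in $\mathcal K_{g,h}(\Omega,\Omega')$ (one needs $v\ge h$ a.e.\ in $\Omega$ and $v=g$ on $\R^n\setminus\Omega$, which is why $k_+$ must dominate $\esssup h$ on $B_r(x_0)\cap\Omega$ \emph{and} $\esssup g$, and why $\phi$ is compactly supported in $B_r(x_0)$), and then checking that the complement integral $\sup_{y\in\supp\phi}\int_{\R^n\setminus B_r(x_0)} w_\pm^{p-1}(x)K(x,y)\dx$ is genuinely comparable to a tail of $(u-k_+)_+$ (resp.\ $(k_--u)_+$) plus a controlled piece — here one splits $\R^n\setminus B_r(x_0)$ and uses $|x-y|\simeq |x-x_0|$ for $y$ in the compact support of $\phi$, as in \eqref{tailm}. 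No genuinely new analytic input beyond Lemma~\ref{lemma:cacc bnd} and the standard De Giorgi iteration is required, which is why this is stated as a short theorem; one may even phrase the conclusion quantitatively with a tail term, mirroring \eqref{eq_obs_bnd}, but only qualitative boundedness is claimed here.
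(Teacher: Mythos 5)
Your proposal is correct and follows essentially the same route as the paper: choose $k_+\geq\max\{\esssup_{B_r(x_0)}g,\esssup_{B_r(x_0)\cap\Omega}h\}$ and $k_-\leq\essinf_{B_r(x_0)}g$, apply the boundary Caccioppoli estimate \eqref{cacio1} of Lemma~\ref{lemma:cacc bnd} to $w_\pm$, and run the De Giorgi iteration of \cite[Theorem~1.1]{DKP15} to get $\esssup_{B_{r/2}(x_0)}w_\pm\leq\delta\,{\rm Tail}(w_\pm;x_0,r/2)+c\,\delta^{-\gamma}(\mean{B_r(x_0)}w_\pm^p\dx)^{1/p}$, whence boundedness. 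The admissibility of the test perturbations $u\mp w_\pm\phi^p$, which you rightly flag as the only delicate point, is exactly what the paper isolates in the statement of Lemma~\ref{lemma:cacc bnd}.
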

\begin{proof}
Choose 
\[
k_+ \geq \max\bigg\{ \esssup_{B_r(x_0)} g, \esssup_{B_r(x_0) \cap \Omega} h  \bigg\}  \quad \text{and} \quad
k_- \leq  \essinf_{B_r(x_0)} g.
\]
Then, repeating the proof of \cite[Theorem 1.1]{DKP15} using the estimate \eqref{cacio1} in Lemma \ref{lemma:cacc bnd} with $w_+ := (u-k_+)_+$ and $w_- := (k_--u)_+$, we get
\begin{equation*} 
\esssup_{B_{r/2}(x_0)} w_\pm \leq 
\delta \, {\rm Tail}(w_\pm ; x_0, r/2)+ c\,\delta^{-\gamma} \left( \mean{B_r(x_0)} w_\pm^p \dx \right)^{1/p}.
\end{equation*}
for any $\delta \in (0,1]$ with $\gamma \equiv \gamma(n,p,s)$ and $c\equiv c(n,p,s,\Lambda)$. Consequently, $u$ is essentially bounded in $B_{r/2}(x_0)$.
\end{proof}

To prove the H\"older continuity of the solution to the obstacle problem on the boundary, we also need the following logarithmic estimate. 

\begin{lemma} \label{log lemma w}
Let $B_r \subset B_{R/2}$ be concentric balls and let $w \in W^{s,p}(B_R) \cap L^{p-1}_{sp}(\R^{n})$ satisfy
\[
\esssup_{B_R} w \leq M < \infty \quad \text{and} \quad \essinf_{B_R} w \geq \eps > 0.
\]
Suppose that
\begin{equation*}
\int_{\R^{n}}\int_{\R^{n}}L(w(x),w(y))\left(\frac{M-w(x)}{w(x)^{p-1}}\phi^{p}(x)-\frac{M-w(y)}{w(y)^{p-1}}\phi^{p}(y)\right)K(x,y)\dxy \geq 0,
\end{equation*}
where $\phi \in C^{\infty}_0(B_{3r/2})$ satisfies $0\leq\phi\leq 1$, $\phi = 1$ in $B_{r}$ and $|\nabla\phi|<c/r$.
Then
\begin{align} \label{log lemma claim w}
&\int_{B_{r}}\int_{B_{r}}\left|\log\frac{w(x)}{w(y)}\right|^{p}K(x,y)\dxy \nonumber \\
&\qquad \leq c\,r^{n-sp}\left(1+\eps^{1-p}\left(\frac rR\right)^{sp}{\rm Tail}(w_-,x_0,R)^{p-1}\right).
\end{align}
\end{lemma}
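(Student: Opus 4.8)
The strategy is the standard one for logarithmic (Caccioppoli-type) estimates adapted to the nonlocal, nonlinear setting: plug a test function of the form $\eta = (M-w)\,w^{1-p}\,\phi^p$ into the weak inequality, expand the pointwise integrand $L(w(x),w(y))\big(\eta(x)-\eta(y)\big)$ and split $\R^n\times\R^n$ into the local diagonal block $B_{3r/2}\times B_{3r/2}$ and the two tail blocks. The key algebraic step is to show that, on the local block, the integrand controls from below a constant multiple of $\big|\log(w(x)/w(y))\big|^p K(x,y)$ minus a term involving only $|\phi(x)-\phi(y)|^p$ (which, by $|\nabla\phi|\le c/r$ and $\mathrm{supp}\,\phi\subset B_{3r/2}$, integrates to $c\,r^{n-sp}$), and from above on the tail block by $c\,r^n/R^{n+sp}$ times $\eps^{1-p}\,\mathrm{Tail}(w_-;x_0,R)^{p-1}$ after using $\esssup_{B_R}w\le M$, $\essinf_{B_R}w\ge\eps$ and $w^{p-1}\ge\eps^{p-1}$ in the denominators. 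Rearranging these two bounds yields the claim.

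\smallskip

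\textbf{Key steps in order.} First I would record the elementary pointwise inequality, for $a,b>0$: writing $F(a,b):=L(a,b)\big(\tfrac{M-a}{a^{p-1}}-\tfrac{M-b}{b^{p-1}}\big)$, one has
\[
F(a,b)\ \le\ -\,\tfrac1c\,\Big|\log\tfrac ab\Big|^p \;+\; c\,(M-\min\{a,b\})^{?}\cdots,
\]
but more precisely the clean version one wants is: there is $c=c(p)$ with
\[
L(a,b)\Big(\tfrac{M-a}{a^{p-1}}\phi^p(x)-\tfrac{M-b}{b^{p-1}}\phi^p(y)\Big)\ \le\ -\,\tfrac1c\,\big|\log a-\log b\big|^p\,\phi^p(y)\;+\;c\,M^{p-1}\big|\phi(x)-\phi(y)\big|^p
\]
whenever $a\le b$ (and symmetrically), valid because $t\mapsto (M-t)t^{1-p}$ is decreasing so the ``diagonal'' part has a favorable sign and can be estimated below by the logarithm via the standard convexity/mean-value argument (cf. the $p$-superharmonic log estimates in the local theory and in \cite{DKP14}). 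Second, I would integrate this over $B_{3r/2}\times B_{3r/2}$, using $\phi\equiv1$ on $B_r$ to restrict the logarithmic term to $B_r\times B_r$, and bound $\int\!\!\int_{B_{3r/2}^2}|\phi(x)-\phi(y)|^pK(x,y)\dxy\le c\,r^{n-sp}$ by $|\nabla\phi|\le c/r$, $|K|\le\Lambda|x-y|^{-n-sp}$ and the usual computation $\int_{B_{3r/2}}\min\{|x-y|^p r^{-p},1\}|x-y|^{-n-sp}\dy\le c\,r^{-sp}$. Third, I would estimate the tail blocks $\int_{\R^n\setminus B_{3r/2}}\!\int_{\mathrm{supp}\,\phi}$: here $L(w(x),w(y))$ is bounded by $(2M)^{p-1}$ when $x$ is near (inside $B_R$) and by $c\,(w_-(x)+M)^{p-1}$ when $x$ is far, while the test-function factor $\big|\tfrac{M-w(y)}{w(y)^{p-1}}\phi^p(y)\big|\le M\eps^{1-p}$; together with $|x-y|\sim|x-x_0|$ for $x$ far and $y\in B_{3r/2}$, and $|B_{3r/2}|\le c r^n$, this produces exactly the term $c\,r^{n-sp}\eps^{1-p}(r/R)^{sp}\mathrm{Tail}(w_-;x_0,R)^{p-1}$ plus a lower-order $c\,r^{n-sp}$ contribution from the part of the tail inside $B_R$. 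Finally, combining with the hypothesis that the full double integral is $\ge0$ and reabsorbing gives \eqref{log lemma claim w}.

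\smallskip

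\textbf{Main obstacle.} The delicate point is the pointwise estimate in Step~1: unlike the local case, one cannot differentiate ``under the gradient'', so one needs the purely algebraic inequality relating $L(a,b)\big((M-a)a^{1-p}-(M-b)b^{1-p}\big)$ to $-|\log(a/b)|^p$ with the right sign and the right error term, treating the superquadratic case $p\ge2$ and subquadratic case $1<p\le2$ separately (using Lemmas~\ref{lemma:p>2} and \ref{lemma:1<p<2}, respectively, together with \eqref{a-b bounds}). Getting the constants to depend only on $p$ and the error to be genuinely of the form $M^{p-1}|\phi(x)-\phi(y)|^p$ — rather than something that reintroduces $\log$ terms one cannot absorb — is where the care is needed; everything after that is bookkeeping with the tail, entirely parallel to the interior logarithmic lemma in \cite{DKP14}.
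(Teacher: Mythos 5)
Your architecture is the same as the paper's (plug in $\eta=(M-w)w^{1-p}\phi^p$, split into a diagonal block and a tail block, extract $|\log(w(x)/w(y))|^p$ from a pointwise algebraic inequality), but the two quantitative bounds you assert are not correct as stated, and the first one is precisely the substance of the lemma. Your ``clean version'' of the pointwise estimate,
$L(a,b)\big(\tfrac{M-a}{a^{p-1}}\phi^p(x)-\tfrac{M-b}{b^{p-1}}\phi^p(y)\big)\le -\tfrac1c|\log a-\log b|^p\phi^p(y)+cM^{p-1}|\phi(x)-\phi(y)|^p$,
is dimensionally inconsistent and in fact false: the left-hand side is homogeneous of degree $1$ under $(a,b,M)\mapsto(\lambda a,\lambda b,\lambda M)$, while your right-hand side mixes degrees $0$ and $p-1$; letting $\lambda\to0$ with $\phi(x)=\phi(y)=1$ makes the left side vanish while the right side stays negative. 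The correct inequality carries a factor $M$ on \emph{both} terms, i.e. $\Psi\le-\tfrac1c M|\log(w(x)/w(y))|^p\phi^p(y)+c\,M|\phi(x)-\phi(y)|^p$, and this $M$ is essential because it must cancel against the $M$ produced by the tail block to leave the $M$-free bound \eqref{log lemma claim w}. Proving it is not a routine ``convexity/mean-value argument'': the paper chooses $\delta=\sigma\frac{w(x)-w(y)}{w(x)}$ in the cutoff inequality $\phi^p(x)\le\phi^p(y)+c\delta\phi^p(y)+c\delta^{1-p}|\phi(x)-\phi(y)|^p$ and then runs a four-case analysis ($w(x)>2w(y)$ versus $w(x)\le2w(y)$, and within the latter $w(x)<M/2$ versus $w(x)\ge M/2$) to show the factor $M-w$ really yields a coefficient comparable to $M$ in front of the logarithm. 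You flag this as ``where the care is needed'' but do not carry it out, so the core of the proof is missing.

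The tail estimate has a related quantitative defect. Bounding the test-function factor crudely by $M\eps^{1-p}$ and $L$ by $(2M)^{p-1}$ on the part of the tail inside $B_R$ gives a contribution of order $M^p\eps^{1-p}r^{n-sp}$, which after dividing by the coefficient $M$ of the logarithmic term leaves $(M/\eps)^{p-1}r^{n-sp}$ rather than the $r^{n-sp}$ in \eqref{log lemma claim w}; since in the application $M=\esssup(u-k_\pm)_\pm+\eps$ is not comparable to $\eps$, this loses the lemma. One must instead keep the denominator $w(x)^{p-1}$ and cancel it against the $w(x)^{p-1}$ coming from $L(w(x),w(y))\le c\big(w(x)^{p-1}+w_-(y)^{p-1}\big)$, reserving $\eps^{1-p}$ only for the genuinely nonlocal piece involving $w_-$, which then produces exactly $c\,M\,r^{n-sp}\big(1+\eps^{1-p}(r/R)^{sp}{\rm Tail}(w_-,x_0,R)^{p-1}\big)$.
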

\begin{proof}
By splitting the integral, we obtain
\begin{align} \label{log lemma I1I2}
0 &\leq \int_{B_{2r}}\int_{B_{2r}}L(w(x),w(y))\left(\frac{M-w(x)}{w(x)^{p-1}}\phi^{p}(x)-\frac{M-w(y)}{w(y)^{p-1}}\phi^{p}(y)\right)K(x,y)\dxy \nonumber \\*
&\quad + 2\int_{\R^{n}\setminus B_{2r}}\int_{B_{2r}}L(w(x),w(y))\frac{M-w(x)}{w(x)^{p-1}}\phi^{p}(x)K(x,y)\dxy  \nonumber \\*[1ex]
&=: I_1 + I_2.
\end{align}
We estimate the integrand of $I_1$ in the case $w(x)>w(y)$. By \cite[Lemma 1.3]{DKP15} we have
\[
\phi^{p}(x) \leq \phi^{p}(y)+c\,\delta\phi^{p}(y)+c\,\delta^{1-p}|\phi(x)-\phi(y)|^{p}
\]
whenever $\delta \in (0,1)$. Choosing
\[
\delta=\sigma\frac{w(x)-w(y)}{w(x)} \in (0,1), \quad \sigma \in (0,1),
\]
in the display above, implies
\begin{align*}
\Psi(x,y)&:=(w(x)-w(y))^{p-1}\left(\frac{M-w(x)}{w(x)^{p-1}}\phi^{p}(x)-\frac{M-w(y)}{w(y)^{p-1}}\phi^{p}(y)\right) \\[1ex]
&\leq (w(x)-w(y))^{p-1}\left(\frac{M-w(x)}{w(x)^{p-1}}-\frac{M-w(y)}{w(y)^{p-1}}+c\,\delta\frac{M-w(x)}{w(x)^{p-1}}\right)\phi^{p}(y) \\
&\quad + c\,\delta^{1-p}(w(x)-w(y))^{p-1}\frac{M-w(x)}{w(x)^{p-1}}|\phi(x)-\phi(y)|^{p} \\[1ex]
&= \left(\frac{M-w(x)}{w(x)^{p-1}}-\frac{M-w(y)}{w(y)^{p-1}}+c\,\sigma\frac{(w(x)-w(y))(M-w(x))}{w(x)^{p}}\right) \\
&\qquad \times (w(x)-w(y))^{p-1}\phi^{p}(y) + c\,\sigma^{1-p}(M-w(x))|\phi(x)-\phi(y)|^{p} \\[1ex]
&=: \Psi_1(x,y) + \Psi_2(x,y).
\end{align*}

We estimate $\Psi_1$ separately in the cases $w(x)>2w(y)$ and $w(x) \leq 2w(y)$.
When $w(x)>2w(y)$, we obtain
\begin{align*}
\Psi_1(x,y) &\leq \left(\frac{w(x)-w(y)}{w(y)}\right)^{p-1}\left(2^{1-p}(M-w(x))-(M-w(y))+c\,\sigma M\right)\phi^{p}(y) \\[1ex]
&\leq \left(\frac{w(x)-w(y)}{w(y)}\right)^{p-1}\left((2^{-1}-2^{1-p})w(x)-(1-2^{1-p})M+c\,\sigma M\right)\phi^{p}(y).
\end{align*}
If $p \geq 2$, then
\begin{align*}
(2^{-1}-2^{1-p})w(x)-(1-2^{1-p})M \leq (2^{-1}-2^{1-p})M-(1-2^{1-p})M = -\frac12 M.
\end{align*}
If $1<p<2$, in turn, then
\begin{align*}
(2^{-1}-2^{1-p})w(x)-(1-2^{1-p})M \leq -(1-2^{1-p})M.
\end{align*}
Thus, choosing $\sigma$ small enough in $\Psi_1$, we obtain
\begin{align} \label{psi1wx>>wy}
\Psi_1(x,y) &\leq -\frac1c M\left(\frac{w(x)-w(y)}{w(y)}\right)^{p-1}\phi^{p}(y).
\end{align}

When $w(x) \leq 2w(y)$, we can estimate
\begin{align*}
\Psi_1(x,y) &\leq \left(\frac{w(x)\left((M-w(x))w(y)^{p-1}-(M-w(y))w(x)^{p-1}\right)}{w(y)^{p-1}(w(x)-w(y))}+c\,\sigma M\right) \\*
&\qquad \times \left(\frac{w(x)-w(y)}{w(x)}\right)^{p}\phi^{p}(y).
\end{align*}
If $w(x)<M/2$, then
\begin{align*}
&(M-w(x))w(y)^{p-1}-(M-w(y))w(x)^{p-1} \\[1ex]
&\qquad \qquad\qquad\qquad \leq (M-w(x))w(y)^{p-1}-(M-w(x))w(x)^{p-1} \\[1ex]
&\qquad\qquad\qquad\qquad \leq -\frac1c (M-w(x))(w(x)-w(y))w(x)^{p-2} \\[1ex]
&\qquad \qquad\qquad\qquad\leq -\frac1c M(w(x)-w(y))w(x)^{p-2},
\end{align*}
and we obtain
\begin{align} \label{psi1wx>wy}
\Psi_1(x,y) &\leq -\frac1c M \left(\frac{w(x)-w(y)}{w(x)}\right)^{p}\phi^{p}(y)
\end{align}
when choosing $\sigma$ small enough. If $w(x) \geq M/2$, in turn, then
\begin{align*}
&w(x)\left((M-w(x))w(y)^{p-1}-(M-w(y))w(x)^{p-1}\right) \\[1ex]
&\qquad\qquad\qquad \leq w(x)\left((M-w(x))w(y)^{p-1}-(M-w(y))w(y)^{p-1}\right) \\[1ex]
&\qquad\qquad\qquad \leq -\frac12 M(w(x)-w(y))w(y)^{p-1},
\end{align*}
and again we obtain \eqref{psi1wx>wy} when choosing $\sigma$ small enough.

Let us then estimate further to get logarithms visible. In the case $w(x)>2w(y)$, it holds
\begin{align} \label{logwx>>wy}
\left(\log\frac{w(x)}{w(y)}\right)^{p} \leq c\left(\frac{w(x)-w(y)}{w(y)}\right)^{p-1}
\end{align}
since $(\log t)^{p} \leq c\,(t-1)^{p-1}$ when $t>2$. In the case $w(x) \leq 2w(y)$, in turn, it holds
\begin{align} \label{logwx>wy}
\nonumber \left(\log\frac{w(x)}{w(y)}\right)^{p} &= \left(\log\left(1+\frac{w(x)-w(y)}{w(y)}\right)\right)^{p} \\*[1ex]
&\leq \left(\frac{w(x)-w(y)}{w(y)}\right)^{p}
\ \leq \ 2^{p}\left(\frac{w(x)-w(y)}{w(x)}\right)^{p}
\end{align}
since $\log(1+t) \leq t$ when $t \geq 0$. Thus, combining \eqref{psi1wx>>wy} with \eqref{logwx>>wy} and \eqref{psi1wx>wy} with \eqref{logwx>wy}, we obtain
\begin{align}
\Psi_1(x,y) &\leq -\frac1c M \left(\log\frac{w(x)}{w(y)}\right)^{p}\phi^{p}(y).
\end{align}

For $\Psi_2$ we easily get
\begin{align*}
\Psi_2(x,y) &\leq c\,M |\phi(x)-\phi(y)|^{p} \leq c\,M r^{-p}|x-y|^{p}.
\end{align*}
In the case $w(x)<w(y)$ we can interchange the roles of $x$ and $y$, whereas the contribution from the case $w(x)=w(y)$ is zero.
After integrating we have
\begin{align} \label{log lemma I1}
I_1 \leq -\frac1c M \int_{B_{r}}\int_{B_{r}} \left|\log\frac{w(x)}{w(y)}\right|^{p}K(x,y)\dxy + c\,M r^{n-sp}.
\end{align}

In the integrand of $I_2$ we can estimate
\begin{align*}
&|w(x)-w(y)|^{p-2}(w(x)-w(y))\frac{M-w(x)}{w(x)^{p-1}}\phi^{p}(x) \\[1ex]
&\qquad \leq c\,\big(w(x)^{p-1}+w_-(y)^{p-1}\big)\frac{M}{w(x)^{p-1}}\chi_{B_{3r/2}}(x) \\[1ex]
&\qquad \leq c\,M\left(1+\eps^{1-p}w_-(y)^{p-1} \right)\chi_{B_{3r/2}}(x),
\end{align*}
and integrating yields
\begin{align} \label{log lemma I2}
I_2 &\leq c\,M \int_{\R^{n}\setminus B_{2r}} \int_{B_{3r/2}}\left(1+\eps^{1-p}w_-(y)^{p-1}\right)K(x,y)\dxy \nonumber \\[1ex]
&\leq c\,M \int_{\R^{n}\setminus B_{2r}} \int_{B_{3r/2}}\left(1+\eps^{1-p}w_-(y)^{p-1}\right)|y-x_0|^{-n-sp}\dxy \nonumber \\[1ex]
&\leq c\,M r^{n-sp}\left(1+\eps^{1-p}\,{\rm Tail}(w_-,x_0,r)^{p-1}\right) \nonumber \\[1ex]
&= c\,M r^{n-sp}\left(1+\eps^{1-p}\left(\frac rR\right)^{sp}{\rm Tail}(w_-,x_0,R)^{p-1}\right)
\end{align}
since $w_-=0$ in $B_R$.
Finally, the claim \eqref{log lemma claim w} follows by combining \eqref{log lemma I1I2}, \eqref{log lemma I1} and \eqref{log lemma I2}.
\end{proof}

We employ the previous lemma for particular truncations of the solutions to the obstacle problem. 

\begin{lemma} \label{log lemma u}
Suppose that $u \in \mathcal K_{g,h}(\Omega,\Omega')$ solves the obstacle problem in $\mathcal K_{g,h}(\Omega,\Omega')$. 
Let $B_R \Subset \Omega'$, let $B_r \subset B_{R/2}$ be concentric balls and let
\[
\infty > k_+ \geq \max\bigg\{\esssup_{B_R} g, \esssup_{B_R \cap \Omega} h\bigg\} \quad \text{and} \quad -\infty < k_- \leq \essinf_{B_R} g.
\]
Then the functions
\[
w_\pm:=\esssup_{B_R}(u-k_\pm)_\pm-(u-k_\pm)_\pm+\eps
\]
satisfy the following estimate
\begin{align} \label{log lemma claim u}
&\int_{B_{r}}\int_{B_{r}}\left|\log\frac{w_\pm(x)}{w_\pm(y)}\right|^{p}K(x,y)\dxy \nonumber \\[1ex]
&\qquad \leq c\,r^{n-sp}\left(1+\eps^{1-p}\left(\frac rR\right)^{sp}{\rm Tail}((w_\pm)_-,x_0,R)^{p-1}\right)
\end{align}
for every $\eps>0$.
\end{lemma}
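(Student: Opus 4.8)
The plan is to reduce the statement to Lemma~\ref{log lemma w}, applied with $w=w_\pm$ and $M:=\esssup_{B_R}w_\pm$: the claimed estimate~\eqref{log lemma claim u} is then verbatim the conclusion~\eqref{log lemma claim w} of that lemma, so the whole task is to check its hypotheses for the two truncations $w_\pm$. First, $\essinf_{B_R}w_\pm=\eps$ is built into the definition, whereas $M<\infty$ is equivalent to $u$ being essentially bounded above (for $w_+$) resp.\ below (for $w_-$) on $B_R$; since $k_+$ dominates $\max\{\esssup_{B_R}g,\esssup_{B_R\cap\Omega}h\}$ and $k_-$ is dominated by $\essinf_{B_R}g$, this is provided by the boundedness results of the previous section (Theorems~\ref{thm:obs bnd}--\ref{thm:boundednessx0}, applied on a ball slightly larger than $B_R$ inside $\Omega'$ and then covering $B_R$). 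The membership $w_\pm\in W^{s,p}(B_R)\cap L^{p-1}_{sp}(\R^n)$ is inherited from $u\in W^{s,p}(\Omega')\cap L^{p-1}_{sp}(\R^n)$ through the (locally Lipschitz, on the range of $u$ over $B_R$) truncations defining $w_\pm$.

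The core is the differential inequality in the hypothesis of Lemma~\ref{log lemma w}. Put $U_\pm:=(u-k_\pm)_\pm$, so that $w_\pm=C_\pm-U_\pm$ with $C_\pm:=\eps+\esssup_{B_R}U_\pm$; hence $w_\pm(x)-w_\pm(y)=-(U_\pm(x)-U_\pm(y))$ and $L(w_\pm(x),w_\pm(y))=-L(U_\pm(x),U_\pm(y))$, while $M-w_\pm=U_\pm-\essinf_{B_R}U_\pm$. Consequently the test function of Lemma~\ref{log lemma w},
\[
\zeta:=\frac{M-w_\pm}{w_\pm^{p-1}}\,\phi^p=\frac{U_\pm-\essinf_{B_R}U_\pm}{w_\pm^{p-1}}\,\phi^p ,
\]
is nonnegative on $\supp\phi\subset B_R$, and since $\zeta>0$ forces $U_\pm>0$ and hence $u\neq g$, it is supported in $\Omega$; when $\supp\phi$ reaches $\R^n\setminus\Omega$, the measure density condition at $x_0\in\partial\Omega$ gives $|B_R\setminus\Omega|>0$ and therefore $\essinf_{B_R}U_\pm=0$, so $\zeta$ does vanish a.e.\ off $\Omega$, and one checks $\zeta\in W^{s,p}_0(\Omega)$. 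With these identities, the hypothesis of Lemma~\ref{log lemma w} for $w=w_\pm$ becomes, after the sign flip,
\[
\int_{\R^n}\int_{\R^n} L(U_\pm(x),U_\pm(y))\big(\zeta(x)-\zeta(y)\big)K(x,y)\dxy\ \leq\ 0 ,
\]
i.e.\ it amounts to $U_\pm$ being a weak \emph{subsolution} in $\Omega$ (tested against the nonnegative $\zeta$).

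For $U_-=(k_--u)_+$ this is classical: since $u$ is a weak supersolution in $\Omega$, the function $k_--u$ is a weak subsolution there, and the positive part of a weak subsolution (the maximum with the solution $0$) is again a weak subsolution. For $U_+=(u-k_+)_+$ one additionally exploits the obstacle structure: because $k_+\geq\esssup_{B_R\cap\Omega}h$, the set $\{u>k_+\}$ lies (up to a null set) inside $\{u>h\}$, where, by Corollary~\ref{obst prob free} applied on the balls contained in $\{u>k_++\delta\}\cap\Omega$ and letting $\delta\downarrow0$ (using the lower semicontinuity of the supersolution $u$), $u$ is a genuine weak solution; consequently $\max(u,k_+)$ — and thus $U_+=\max(u,k_+)-k_+$ — is a weak subsolution in $\Omega$, on the contact set $\{u=h\}$ simply being $\equiv0$. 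With both inequalities established, Lemma~\ref{log lemma w} applies to $w=w_\pm$ and delivers exactly~\eqref{log lemma claim u}.

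I expect the delicate point to be precisely the weak subsolution property of $U_+=(u-k_+)_+$: it is here that the obstacle genuinely enters (one needs $u$ to solve the equation, not merely the inequality, off the contact set), and one must handle with care both the contact set, where $U_+$ vanishes thanks to $k_+\geq h$, and the region near $\partial\Omega$, where the argument leans on the density condition on $\R^n\setminus\Omega$ to place the test function in $W^{s,p}_0(\Omega)$. The essential boundedness of $w_\pm$ on $B_R$ is comparatively routine, modulo invoking the quantitative form of Theorems~\ref{thm:obs bnd}--\ref{thm:boundednessx0} rather than merely their qualitative content.
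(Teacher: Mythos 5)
Your overall strategy---verify the hypotheses of Lemma~\ref{log lemma w} for $w=w_\pm$ and quote its conclusion---is the same as the paper's, and the boundedness and membership checks are fine. The gap lies in how you verify the key integral inequality. You reduce it to the assertion that $U_\pm$ is a weak subsolution in $\Omega$, tested against $\zeta=\frac{M-w_\pm}{w_\pm^{p-1}}\phi^p$. This fails on two counts. First, $\zeta$ is not an admissible test function for the weak sub/supersolution inequality: by the paper's definition one may only test against $\eta\in W_0^{s,p}(\Omega'')$ with $\Omega''\Subset\Omega$, whereas $\zeta$ is supported on $\{U_\pm>0\}\cap B_{3r/2}$, a set that in general reaches $\partial\Omega$ (and must be allowed to, since the lemma is applied in Lemma~\ref{lemma:osc reduction} with the balls centered at $x_0\in\partial\Omega$); vanishing a.e.\ off $\Omega$ does not put $\zeta$ in $W_0^{s,p}(\Omega'')$ for any $\Omega''\Subset\Omega$. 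Your appeal to the measure density condition is moreover outside the lemma's hypotheses. This is exactly why the paper does \emph{not} pass through the subsolution property, but instead uses the obstacle variational inequality $\langle\mathcal Au,\,v_\pm-u\rangle\geq0$ with the explicit competitor $v_\pm=u\mp\eps^{p-1}\frac{H_\pm-w_\pm}{w_\pm^{p-1}}\phi^p$: admissibility in $\mathcal K_{g,h}(\Omega,\Omega')$ only requires $v_\pm\geq h$ in $\Omega$ (guaranteed by $k_+\geq\esssup_{B_R\cap\Omega}h$ and $\eps^{p-1}w_+^{1-p}\phi^p\leq1$) and $v_\pm=g$ off $\Omega$ (guaranteed by the choice of $k_\pm$ relative to $g$ on $B_R$), with no compact support in $\Omega$ needed.

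Second, the subsolution property of $U_+=(u-k_+)_+$ is asserted rather than proved. Your route through Corollary~\ref{obst prob free} needs $\essinf_B(u-h)>0$ on balls, hence some form of (semi)continuity of $u$ and $h$ that the lemma does not assume; and even granting that $u$ solves the equation on $\{u>h\}$, gluing the subsolution property of $\max(u,k_+)$ across the free boundary is a nontrivial step in the nonlocal setting (no truncation/pasting lemma of this kind is available in the paper; the same remark applies to your ``classical'' claim that $(k_--u)_+$ is a subsolution). The paper replaces all of this by a purely pointwise algebraic comparison: from $\langle\mathcal Au,v_\pm-u\rangle\geq0$ it obtains $\mp\int\!\!\int L(u(x),u(y))(\zeta(x)-\zeta(y))K\dxy\geq0$ with $\zeta=\frac{H_\pm-w_\pm}{w_\pm^{p-1}}\phi^p$, and then checks, case by case according to the positions of $u(x),u(y)$ relative to $k_\pm$, the pointwise bound
\[
\mp L(u(x),u(y))\big(\zeta(x)-\zeta(y)\big)\ \leq\ L(w_\pm(x),w_\pm(y))\big(\zeta(x)-\zeta(y)\big),
\]
which integrates to the hypothesis of Lemma~\ref{log lemma w}. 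That pointwise step is the content you would need to supply in place of the unproven truncation claims.
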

\begin{proof}
Let $\eps>0$ and denote $H_\pm := \esssup_{B_R}(u-k_\pm)_\pm+\eps$. Notice that $H_\pm$ is finite by Theorem \ref{thm:boundednessx0}.
Let $\phi \in C^{\infty}_0(B_{3r/2})$ be such that $0\leq\phi\leq 1$, $\phi \equiv 1$ in $B_{r}$ and $|D\phi|<c/r$.
Denoting by
\[
v_\pm = u \mp \eps^{p-1} \frac{(u-k_\pm)_\pm}{(H_\pm - (u-k_\pm)_\pm)^{p-1}}\phi^{p}
= u \mp \eps^{p-1} \frac{H_\pm-w_\pm}{w_\pm^{p-1}}\phi^{p},
\]
we clearly have $v_\pm \in \mathcal K_{g,h}(\Omega,\Omega')$ since, in particular,
\[
v_+ =  u - \eps^{p-1} \frac{(u-k_+)_+}{w_+^{p-1}}\phi^{p} \geq u - (u-k_+)_+ \chi_{B_R} \geq h
\]
almost everywhere in $\Omega$ because $k_+ \geq  \esssup_{B_R \cap \Omega} h$.

Since $u$ solves the obstacle problem, we have $\langle \mathcal Au, v_\pm-u \rangle \geq 0$, and thus
\begin{align*}
&\mp\int_{\R^{n}}\int_{\R^{n}}L(u(x),u(y))\left(\frac{H_\pm-w_\pm(x)}{w_\pm(x)^{p-1}}\phi^{p}(x)-\frac{H_\pm-w_\pm(y)}{w_\pm(y)^{p-1}}\phi^{p}(y)\right) \\
& \qquad \qquad \ \ \times K(x,y)\dxy \ \geq \ 0.
\end{align*}
Let us estimate the integrand above first for $w_+$. If $u(x),u(y)>k_+$, we simply have $-L(u(x),u(y))=L(w_+(x),w_+(y))$, and consequently
\begin{align} \label{-Luxuy<Lwxwy}
&-L(u(x),u(y))\left(\frac{H_+-w_+(x)}{w_+(x)^{p-1}}\phi^{p}(x)-\frac{H_+-w_+(y)}{w_+(y)^{p-1}}\phi^{p}(y)\right) \nonumber \\[1ex]
&\qquad \leq L(w_+(x),w_+(y))\left(\frac{H_+-w_+(x)}{w_+(x)^{p-1}}\phi^{p}(x)-\frac{H_+-w_+(y)}{w_+(y)^{p-1}}\phi^{p}(y)\right)
\end{align}
holds. If $u(x)>k_+\geq u(y)$, then $w_+(y)=H_+$ and
\[
-(u(x)-u(y))=-(H_+ -w_+(x) +k_+-u(y)) \leq w_+(x)-w_+(y),
\]
and \eqref{-Luxuy<Lwxwy} follows. If, in turn, $u(y)>k_+\geq u(x)$, we can just exchange the roles of $x$ and $y$ to obtain \eqref{-Luxuy<Lwxwy}, whereas in the case $u(x),u(y) \leq k_+$ \eqref{-Luxuy<Lwxwy} is trivial since $w_+(x)=w_+(y)=H_+$.
Similarly, we obtain
\begin{align*} \label{Luxuy<Lwxwy}
&L(u(x),u(y))\left(\frac{H_- -w_-(x)}{w_-(x)^{p-1}}\phi^{p}(x)-\frac{H_- -w_-(y)}{w_-(y)^{p-1}}\phi^{p}(y)\right) \nonumber \\*[1ex]
&\qquad \leq L(w_-(x),w_-(y))\left(\frac{H_- -w_-(x)}{w_-(x)^{p-1}}\phi^{p}(x)-\frac{H_- -w_-(y)}{w_-(y)^{p-1}}\phi^{p}(y)\right)
\end{align*}
for $w_-$, and thus
\begin{align*}
& \int_{\R^{n}}\int_{\R^{n}}L(w_\pm(x),w_\pm(y))\left(\frac{H_\pm-w_\pm(x)}{w_\pm(x)^{p-1}}\phi^{p}(x)-\frac{H_\pm-w_\pm(y)}{w_\pm(y)^{p-1}}\phi^{p}(y)\right) \\
& \qquad \qquad \ \ \times K(x,y)\dxy \  \geq\  0.
\end{align*}
Now the claim \eqref{log lemma claim u} follows from Lemma \ref{log lemma w}.
\end{proof}

\subsection{H\"older continuity up to the boundary}
Before starting to prove the H\"older continuity up to the boundary, it is worth noticing that we have to assume $g \in \mathcal{K}_{g,h}(\Omega,\Omega')$
since otherwise the solution may be discontinuous on every boundary point.
\begin{example}
Suppose that $sp<1$ and let $\Omega=B_{1}(0)$ and $\Omega'=B_{2}(0)$. Then the characteristic function $\chi_\Omega$ solves the obstacle
problem in $\mathcal K_{g,h}(\Omega,\Omega')$ with constant functions $g\equiv 0$ and $h \equiv 1$.
Indeed, $\chi_\Omega \in W^{s,p}(\Omega')$ when $sp<1$ and it is easy to see that it is a weak supersolution. Consequently, it is the solution to the obstacle problem in $\mathcal K_{g,h}(\Omega,\Omega')$ in view of Proposition \ref{smallest super}.
\end{example}

In the following we will assume that the complement of $\Omega$ satisfies the following measure density condition.
There exist $\delta_\Omega \in (0,1)$ and $r_0>0$ such that for every $x_0 \in \partial\Omega$
\begin{equation} \label{eq:dens cond}
\inf_{0<r<r_0} \frac{| (\R^n \setminus \Omega) \cap B_r(x_0)|}{|B_r(x_0)|} \geq \delta_\Omega.
\end{equation}
As mentioned in the introduction, this requirement is in the same spirit of the standard Nonlinear Potential Theory, rearranged as an information given only on the complement of the set in accordance with the nonlocality of the involved equations; and this is also an improvement with respect to the previous boundary regularity results in the fractional literature when strong smooth or Lipschitz regularity of the set is required.

\begin{lemma} \label{lemma:dens cond}
Let $\Omega$ satisfy \eqref{eq:dens cond} for $r_0>0$ and $\delta_\Omega>0$,
and let $B\equiv B_r(x_0)$ with $x_0 \in \partial\Omega$ and $r \in (0,r_0)$.
Suppose that $f \in W^{s,p}(B)$ and $f=0$ in $B \setminus \Omega$. Then
\begin{equation}
\mean{B}|f|^{p}\dx \leq c\left(1-(1-\delta_\Omega)^{1-1/p}\right)^{-p}r^{sp}\int_B\mean{B}\frac{|f(x)-f(y)|^{p}}{|x-y|^{n+sp}}\dxy.
\end{equation}
\end{lemma}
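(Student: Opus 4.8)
The statement is a Poincaré–Sobolev–type inequality on a ball $B\equiv B_r(x_0)$ for a function $f$ that vanishes on a subset of $B$ of definite relative measure, namely $B\setminus\Omega$, whose measure is at least $\delta_\Omega|B|$ by the density condition \eqref{eq:dens cond}. The plan is to reduce to the case $r=1$ by scaling $f_r(x):=f(x_0+rx)$, which turns the left-hand side into $\mean{B_1}|f_r|^p\dx$ and the right-hand side into $\mean{B_1}\mean{B_1}|f_r(x)-f_r(y)|^p|x-y|^{-n-sp}\dxy$ (the factor $r^{sp}$ and the averages absorb all scaling), so it suffices to prove the inequality on the unit ball. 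Then I would estimate, for any fixed $y$ with $f_r(y)=0$,
\[
\mean{B_1}|f_r(x)|^p\dx = \mean{B_1}|f_r(x)-f_r(y)|^p\dx \le c\mean{B_1}\mean{B_1}\frac{|f_r(x)-f_r(y)|^p}{|x-y|^{n+sp}}\dxy,
\]
but this naive bound would only give a constant times the full seminorm without the correct dependence on $\delta_\Omega$; to get the stated factor $(1-(1-\delta_\Omega)^{1-1/p})^{-p}$ one must instead subtract the \emph{average of $f_r$ over the zero set} and track the constant carefully.

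**Key steps in order.** First, after rescaling, write $Z:=(B_1\setminus\Omega_r)$ with $|Z|\ge\delta_\Omega|B_1|$ and let $f_Z:=\mean{Z}f_r$; since $f_r\equiv 0$ on $Z$ we have $f_Z=0$, so in fact we are free to use $0$ as a reference value. Second, apply the standard fractional Poincaré inequality on the ball with the ball-average $\bar f:=\mean{B_1}f_r$:
\[
\mean{B_1}|f_r-\bar f|^p\dx \le c\mean{B_1}\mean{B_1}\frac{|f_r(x)-f_r(y)|^p}{|x-y|^{n+sp}}\dxy =: c\,S.
\]
Third, estimate $|\bar f|=|\bar f - f_Z| = |\mean{Z}(f_r-\bar f)\cdot(|B_1|/|B_1|)|$ more carefully: $|\bar f|^p = |\mean{Z}(\bar f - f_r)|^p \le \mean{Z}|f_r-\bar f|^p \le \frac{|B_1|}{|Z|}\mean{B_1}|f_r-\bar f|^p \le \delta_\Omega^{-1}cS$. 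Fourth, combine via $\mean{B_1}|f_r|^p \le 2^{p-1}\mean{B_1}|f_r-\bar f|^p + 2^{p-1}|\bar f|^p \le c(1+\delta_\Omega^{-1})S$. This already proves the inequality but with constant $c(1+\delta_\Omega^{-1})$ rather than $c(1-(1-\delta_\Omega)^{1-1/p})^{-p}$; to obtain the sharper, claimed form one optimizes the splitting coefficient: for $\theta\in(0,1)$ write $f_r = (f_r-\theta\bar f) - (1-\theta)(-\bar f)$ is not quite it — rather, use $|f_r|^p\le (1+\lambda)^{p-1}|f_r-\bar f|^p + (1+\lambda^{-1})^{p-1}|\bar f|^p$ for any $\lambda>0$, insert $|\bar f|^p\le\delta_\Omega^{-1}\mean{B_1}|f_r-\bar f|^p$, and choose $\lambda$ so that the combined coefficient equals the constant in the statement; the algebra $(1+\lambda)^{p-1}+(1+\lambda^{-1})^{p-1}\delta_\Omega^{-1}$ minimized over $\lambda$ produces exactly the factor $(1-(1-\delta_\Omega)^{1-1/p})^{-p}$ after setting $1+\lambda^{-1} = (1-\delta_\Omega)^{-1/p}\cdot(\text{something})$. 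Fifth, undo the scaling to recover the factor $r^{sp}$ and conclude.

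**Main obstacle.** The genuine difficulty is not the mechanism — subtracting the mean and using $|Z|\ge\delta_\Omega|B|$ to control it — but pinning down the \emph{exact} constant $(1-(1-\delta_\Omega)^{1-1/p})^{-p}$, which forces one to be disciplined about the choice of splitting parameter and about which elementary inequality ($(a+b)^p\le(1+\lambda)^{p-1}a^p+(1+\lambda^{-1})^{p-1}b^p$) is used and with which $\lambda$. I would expect the cleanest route is: set $a_Z:=(|Z|/|B_1|)^{1/p}\ge\delta_\Omega^{1/p}$, observe $\|f_r\|_{L^p(B_1)}\le\|f_r-\bar f\|_{L^p(B_1)}+|\bar f|\,|B_1|^{1/p}$ and $|\bar f|\,|Z|^{1/p}=\|\,\bar f\,\|_{L^p(Z)}=\|\bar f - f_r\|_{L^p(Z)}\le\|f_r-\bar f\|_{L^p(B_1)}$, hence $\|f_r\|_{L^p(B_1)}\le(1+a_Z^{-1})\|f_r-\bar f\|_{L^p(B_1)}\le(1+\delta_\Omega^{-1/p})\,(cS)^{1/p}|B_1|^{1/p}$; then raising to the $p$-th power and noting $(1+\delta_\Omega^{-1/p})^p$ is comparable to $(1-(1-\delta_\Omega)^{1-1/p})^{-p}$ up to a dimensional constant (both blow up like $\delta_\Omega^{-1}$ as $\delta_\Omega\to0^+$ and are bounded as $\delta_\Omega\to1^-$), absorbing the discrepancy into $c$. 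Working at the level of $L^p$ norms rather than $p$-th powers is what makes the triangle inequality give the clean multiplicative constant $(1+\delta_\Omega^{-1/p})$ directly.
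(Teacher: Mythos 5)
Your proof is correct in substance, but it follows a genuinely different route from the paper's. The paper does not use the zero set $B\setminus\Omega$ at all: it bounds the average directly by H\"older's inequality restricted to the set where $f$ can be nonzero, namely
$|f_B| \leq \frac{|B\cap\Omega|}{|B|}\mean{B\cap\Omega}|f|\dx \leq \left(\frac{|B\cap\Omega|}{|B|}\right)^{1-1/p}\left(\mean{B}|f|^p\dx\right)^{1/p} \leq (1-\delta_\Omega)^{1-1/p}\left(\mean{B}|f|^p\dx\right)^{1/p}$,
then writes $\left(\mean{B}|f|^p\right)^{1/p} \leq \left(\mean{B}|f-f_B|^p\right)^{1/p} + |f_B|$ and \emph{absorbs} the second term into the left-hand side; this produces the factor $\left(1-(1-\delta_\Omega)^{1-1/p}\right)^{-p}$ exactly, with no scaling and no optimization over splitting parameters, after which the fractional Poincar\'e inequality finishes the job. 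Your approach instead controls $\bar f$ through the zero set $Z$ with $|Z|\geq\delta_\Omega|B|$ and the triangle inequality in $L^p$, arriving at the constant $(1+\delta_\Omega^{-1/p})^p$. This is legitimate and in fact yields a \emph{smaller} constant, so the stated bound follows once one checks that $(1+\delta^{-1/p})^p \leq c(p)\left(1-(1-\delta)^{1-1/p}\right)^{-p}$ on $(0,1)$ — which is true, since the product $(1+\delta^{-1/p})\left(1-(1-\delta)^{1-1/p}\right)$ is continuous on $(0,1)$ with finite limits at both endpoints. What the paper's argument buys is the exact stated constant with no final comparison step; what yours buys is a sharper dependence on $\delta_\Omega$.

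Two small corrections. First, your asymptotic claim that both constants ``blow up like $\delta_\Omega^{-1}$'' is wrong: since $1-(1-\delta)^{1-1/p}\sim(1-1/p)\delta$ as $\delta\to 0^+$, the paper's constant blows up like $\delta^{-p}$, while yours blows up like $\delta^{-1}$; the comparison you need still holds precisely because $\delta^{-1}\leq\delta^{-p}$ for $p>1$, but the justification you give is not the right one. Second, the digression about optimizing a splitting parameter $\lambda$ to reproduce the constant ``exactly'' does not lead anywhere (and is unnecessary); the mechanism that actually produces the exact constant is absorption after a first-power (not $p$-th power) estimate of $|f_B|$, as in the paper. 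Your final $L^p$-norm triangle-inequality version is the part of your write-up that constitutes a complete proof.
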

\begin{proof}
Since $f=0$ in $B\setminus\Omega$,
\begin{align*}
|f_B| &\leq \frac{|B\cap\Omega|}{|B|}\mean{B\cap\Omega}|f|\dx \leq \frac{|B\cap\Omega|}{|B|}\left(\mean{B\cap\Omega}|f|^{p}\dx\right)^{1/p} \\[1ex]
&\leq\left(\frac{|B\cap\Omega|}{|B|}\right)^{1-1/p}\left(\mean{B}|f|^{p}\dx\right)^{1/p}
\ =  \ (1-\delta_\Omega)^{1-1/p}\left(\mean{B}|f|^{p}\dx\right)^{1/p},
\end{align*}
and we can estimate
\begin{align*}
\left(\mean{B}|f|^{p}\dx\right)^{1/p} &\leq \left(\mean{B}|f-f_B|^{p}\dx\right)^{1/p} + |f_B| \\[1ex]
&\leq \left(\mean{B}|f-f_B|^{p}\dx\right)^{1/p} + (1-\delta_\Omega)^{1-1/p}\left(\mean{B}|f|^{p}\dx\right)^{1/p}.
\end{align*}
Absorbing the last term yields
\begin{align*}
\mean{B}|f|^{p}\dx \leq \left(1-(1-\delta_\Omega)^{1-1/p}\right)^{-p}\mean{B}|f-f_B|^{p}\dx,
\end{align*}
and the claim follows from the fractional Poincar\'e inequality.
\end{proof}

\begin{lemma} \label{lemma:osc reduction}
Assume that $x_0 = 0 \in \partial \Omega$ and $g(0)=0$, where $\Omega$ satisfies \eqref{eq:dens cond} for  all $r\leq R$.  Let $\omega > 0$. There exist $\tau_0 \in (0,1)$, $\sigma \in (0,1)$ and $\theta \in (0,1)$, all depending only on $n$, $p$, $s$ and $\delta_\Omega$, such that
if
\begin{equation} \label{oscB}
\osc_{B_R(0)}u + \sigma{\rm Tail}(u;0,R) \leq \omega \quad \text{and} \quad \osc_{B_R(0)}g \leq \frac\omega 8
\end{equation}
hold, then the decay estimate 
\begin{equation} \label{osctauB}
\osc_{B_{\tau R}(0)}u + \sigma{\rm Tail}(u;0,\tau R) \leq (1-\theta)\omega
\end{equation}
holds as well for every $\tau \in (0,\tau_0]$.
\end{lemma}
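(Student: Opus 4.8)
### Proof proposal for Lemma~\ref{lemma:osc reduction}

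\textbf{Overall strategy.} The plan is to run the classical oscillation-reduction dichotomy, adapted to the nonlocal setting and to the boundary. Either $u$ is ``mostly large'' or ``mostly small'' in a suitable dyadic ball near $0$; in both cases we want to push the oscillation down by a fixed factor $(1-\theta)$ at a scale $\tau R$, while simultaneously controlling the tail term at the new scale. Because $0\in\partial\Omega$ and $g(0)=0$, the boundary values force $u$ to be small on $(\mathbb R^n\setminus\Omega)\cap B_R$: indeed the second hypothesis in~\eqref{oscB} gives $|g|\le\omega/8$ on $B_R\cap(\mathbb R^n\setminus\Omega)$, so after a vertical normalization we may assume $-\omega\le u\le\omega$ on $B_R$ and $|u|\le\omega/8$ on $(\mathbb R^n\setminus\Omega)\cap B_R$. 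The extra ingredient over the interior De~Giorgi-type argument in~\cite{DKP14} is precisely this: the set where $u$ is controlled from outside has positive density $\delta_\Omega$ by~\eqref{eq:dens cond}, and this density is what replaces the ``half the ball'' alternative in the interior proof.

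\textbf{Step 1: the two alternatives.} Consider the (normalized) function $v:=u$ and the two truncations $w_+:=\esssup_{B_R}(u-k_+)_+-(u-k_+)_++\eps$ and $w_-$ from Lemma~\ref{log lemma u}, with $k_+$ (resp.\ $k_-$) chosen at level $\approx \omega/8$ above (resp.\ below) $0$, i.e.\ just outside the range of $g$ on $B_R$. On a fixed intermediate ball, say $B_{R/2}$, at least one of the following must hold for a suitable level $\lambda\sim\omega$:
\[
\big|\{x\in B_{R/2}: u(x)\le -\lambda+\omega/4\}\big|\ \ge\ \tfrac12|B_{R/2}|
\quad\text{or}\quad
\big|\{x\in B_{R/2}: u(x)\ge \lambda-\omega/4\}\big|\ \ge\ \tfrac12|B_{R/2}|.
\]
Suppose we are in the first case (the second is symmetric but \emph{not} identical, since the boundary datum is one-sided constrained only through $\osc g$; we treat both). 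The key point is that in the \emph{second} case we may \emph{additionally} add the set $(\mathbb R^n\setminus\Omega)\cap B_{R/2}$, where $u=g$ is already $\le\omega/8\le\lambda-\omega/4$ for appropriate $\lambda$, so that the ``good set'' for the upper truncation has measure at least $(\tfrac12+\delta_\Omega\cdot\text{const})|B_{R/2}|$ — but actually the clean way is: in the bad case for the lower truncation, i.e.\ when $u$ is \emph{large} on more than half of $B_{R/2}$, we instead work with $w_-$ and note that $w_-$ is bounded below on the whole complement set $(\mathbb R^n\setminus\Omega)\cap B_R$ of positive density, giving the measure lower bound needed to start the iteration regardless of which alternative holds. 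I would organize it so that the density condition always supplies the ``initial smallness of the good set's complement'' for \emph{one} of $w_+,w_-$.

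\textbf{Step 2: De~Giorgi / logarithmic iteration to kill the oscillation.} With the measure information from Step~1 in hand, apply the logarithmic estimate~\eqref{log lemma claim u} of Lemma~\ref{log lemma u} together with a fractional Poincar\'e inequality (through Lemma~\ref{lemma:dens cond}, which is exactly tailored to functions vanishing on a positive-density portion of the ball) to deduce that $\log w_\pm$ has small mean oscillation on $B_r$, $r\le R/2$. A measure-shrinking lemma of De~Giorgi type — precisely the one underlying the proof of~\cite[Theorem~1.2]{DKP14}, whose proof carries over because the Caccioppoli inequality with tail we need is Lemma~\ref{lemma:cacc bnd} — then upgrades this into: for a small enough dyadic scale $\tau_0 R$, either $\esssup_{B_{\tau_0 R}} u \le \omega - \theta\omega$ or $\essinf_{B_{\tau_0 R}} u \ge -\omega+\theta\omega$, for a fixed $\theta=\theta(n,p,s,\delta_\Omega)\in(0,1)$. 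Either way $\osc_{B_{\tau_0 R}} u\le(1-\theta)\omega$. Here the tail contributions entering~\eqref{log lemma claim u} and the Caccioppoli estimate are absorbed using the hypothesis $\sigma\,{\rm Tail}(u;0,R)\le\omega$ by choosing $\sigma$ small; this is where $\sigma$ gets fixed.

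\textbf{Step 3: propagating the tail bound to the smaller scale.} It remains to check $\sigma\,{\rm Tail}(u;0,\tau R)\le(1-\theta)\omega$ for all $\tau\le\tau_0$. Split ${\rm Tail}(u;0,\tau R)^{p-1}$ into the annulus $B_R\setminus B_{\tau R}$ and the far region $\mathbb R^n\setminus B_R$. On the annulus $|u|\le \omega$ (after normalization around the value $\approx$ the new oscillation center), and the integral $\int_{B_R\setminus B_{\tau R}}|x|^{-n-sp}\,dx \le c(\tau R)^{-sp}$, contributing $\le c\,\omega^{p-1}$; rescaling in the definition of Tail turns this into a bounded multiple of $\omega^{p-1}$ with the same constant for every $\tau$. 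The far part is $(\tau R)^{sp}R^{-sp}\,{\rm Tail}(u;0,R)^{p-1}\le \tau_0^{sp}\,{\rm Tail}(u;0,R)^{p-1}\le \tau_0^{sp}(\omega/\sigma)^{p-1}$, which is a small fraction of $\omega^{p-1}$ once $\tau_0$ is chosen small depending on $\sigma$. Collecting, $\sigma\,{\rm Tail}(u;0,\tau R)\le c\,\sigma\,\omega + \tau_0^{sp/(p-1)}\omega$, and fixing $\sigma$ small (also in accordance with Step~2) and then $\tau_0$ small gives the required bound $\le(1-\theta)\omega$ after, if necessary, slightly shrinking $\theta$. This also dictates the final order of choosing constants: first $\theta$ from the De~Giorgi lemma, then $\sigma$ small enough for Steps~2--3, then $\tau_0$ small depending on $\sigma$ and $\delta_\Omega$.

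\textbf{Main obstacle.} The delicate point is Step~1 / Step~2 interplay: making the density condition~\eqref{eq:dens cond} genuinely do the job of the interior ``half-ball'' alternative. In the interior proof one is free to choose which of $w_+,w_-$ to iterate based on where $u$ concentrates; at the boundary one of the two truncations is automatically ``seeded'' with a good set of density $\delta_\Omega$ coming from the complement of $\Omega$ (where $u=g$ is pinned near $0$), but one must verify the measure lower bounds survive the passage from $B_R$ to $B_{R/2}$ to $B_{\tau_0 R}$ with constants depending only on $\delta_\Omega$, and that the tail terms generated along the iteration never exceed $c\,\omega^{p-1}$ — which is exactly what the hypothesis $\sigma\,{\rm Tail}(u;0,R)\le\omega$ with $\sigma$ small is designed to guarantee. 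Handling both alternatives symmetrically, and bookkeeping the normalization of $u$ by the running oscillation center, is the part that requires care rather than new ideas.
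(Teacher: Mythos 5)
Your proposal is correct and follows essentially the same route as the paper: first propagate the tail bound by splitting ${\rm Tail}(u;0,\tau R)$ into the annulus $B_R\setminus B_{\tau R}$ and the far part (fixing $\tau_0\lesssim\sigma^{(p-1)/(sp)}$), then combine the logarithmic estimate of Lemma~\ref{log lemma u} with the density Poincar\'e inequality of Lemma~\ref{lemma:dens cond} applied to $\log\tilde w_\pm$ (which vanishes on $(\R^n\setminus\Omega)\cap B_R$ by \eqref{eq:dens cond}), use Chebyshev to show the level set near $\sup_{B_R}u$ has small relative measure, and feed that into the boundary sup estimate of Theorem~\ref{thm:boundednessx0}. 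The only inessential difference is your half-ball dichotomy in Step~1, which the paper dispenses with: unless $\osc_{B_R}u\le\tfrac34\omega$ (the trivial case), one of $\sup_{B_R}u\ge\tfrac38\omega$ or $\inf_{B_R}u\le-\tfrac38\omega$ holds, and then the corresponding truncation $w_\pm$ is automatically equal to its maximum on all of $B_R\setminus\Omega$, exactly as your closing remark in Step~1 anticipates.
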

\begin{proof}
Denote $H=\theta/\sigma$ and $B\equiv B_R(0)$. We begin by estimating the tail term to obtain
\begin{align*}
\sigma^{p-1}{\rm Tail}(u;0,\tau R)^{p-1} &= \sigma^{p-1}(\tau R)^{sp}\int_{B \setminus \tau B}\frac{|u(x)|^{p-1}}{|x|^{n+sp}}\dx \\
& \quad  + \sigma^{p-1}\tau^{sp}{\rm Tail}(u;0,R)^{p-1} \\[1ex]
&\leq c\,\sigma^{p-1}\omega^{p-1}+\tau^{sp}\omega^{p-1}
\end{align*}
by \eqref{oscB}. Consequently,
\begin{equation} \label{eq:Tail bndr osc 000}  
\sigma{\rm Tail}(u;0,\tau R) \leq \tilde c \left(\frac\theta H + \tau^{sp/(p-1)}\right)\omega
 \leq \frac{2\tilde c\,\theta} H \omega = \theta\omega
\end{equation}
when restricting $\tau_0 \leq \sigma^{(p-1)/(sp)}$ and choosing $H = 2\tilde c \geq 1$, where $\tilde c \equiv \tilde c(n,p,s)$.
Thus, it suffices to prove that
\begin{equation} \label{osctauB2}
\osc_{\tau B}u \leq (1-2\theta)\omega
\end{equation}
for all $\tau \leq \tau_0$. To this end, let
\[
k_+ := \sup_{B} u- \frac{\omega}{4}, \quad k_- := \inf_{B}u + \frac{\omega}{4}  , \quad \eps:=\theta\omega
\]
and
\[
w_\pm:=\sup_{B}(u-k_\pm)_\pm-(u-k_\pm)_\pm+\eps, \quad \tilde w_\pm:=\frac{w_\pm}{\sup_{B} w_\pm}.
\] 

We may assume $\sup_{B}u \geq \frac38\omega$ or $\inf_{B}u \leq -\frac38\omega$ since otherwise
$\osc_{\tau B}u \leq \osc_{B}u \leq \frac34\omega$ and there is nothing to prove if we assume that $\theta \leq 1/8$.
We consider the case $\sup_{B}u \geq \frac38\omega$; the case $\inf_{B}u \leq -\frac38\omega$ is symmetric.
Notice that we have $\tilde w_+ = 1$ in $B\setminus \Omega$ due to the condition $u=g \leq \omega/8$ in $B\setminus \Omega$. First, we estimate, by Lemmas \ref{lemma:dens cond} and \ref{log lemma u} with $r\equiv 2\tau R$ and \eqref{oscB} when restricting $\tau_0 \leq 1/4$ and $\tau_0 \leq \sigma^{2(p-1)/(sp)}$, to obtain 
\begin{align*}
\mean{2\tau B}\left|\log \tilde w_+\right|^{p}\dx &\leq c\,(\tau R)^{sp}\int_{2\tau B}\mean{2\tau B}\left|\log\frac{\tilde w_+(x)}{\tilde w_+(y)}\right|^{p}K(x,y)\dxy \\[1ex]
&\leq c\,(\tau R)^{sp}\int_{2\tau B}\mean{2\tau B}\left|\log\frac{w_+(x)}{w_+(y)}\right|^{p}K(x,y)\dxy \\[1ex]
&\leq c\,\Big(1+(\theta\omega)^{1-p}\tau^{sp}{\rm Tail}((w_+)_-;0,R)^{p-1}\Big) \\[1ex]
&\leq c\left(1+(\theta\omega)^{1-p}\sigma^{2(p-1)}{\rm Tail}(u;0,R)^{p-1}\right) \\[1ex]
&\leq c\left(1+(\theta\omega)^{1-p}\left(\frac\theta H\right)^{p-1}\omega^{p-1}\right) \\[1ex]
&\leq c.
\end{align*}
Consequently, by Chebyshev's Inequality we have
\begin{align} \label{logtildew>M} \nonumber
\frac{\left|2\tau B \cap \{|\log \tilde w_+| \geq  \left|\log(20\,\theta)\right| \}\right|}{|2\tau B|} &\leq  |\log (20\,\theta)|^{-p}\mean{2\tau B}|\log\tilde w_+|^{p}\dx 
\\*[1ex]
 &\leq c\,|\log (20\,\theta)|^{-p}.
\end{align}

Let us estimate the left-hand side of \eqref{logtildew>M}. Since, by definitions, $0 < \tilde w_+ \leq 1$ and $\sup_{B} (u-k_+)_+ = \omega/4$, we have that
\begin{align*}
\left\{|\log \tilde w_+| \geq|\log (20\,\theta)| \right\} &= \left\{\tilde w_+ \leq 20\,\theta\right\} 
\\*[1ex]
 & = \Big\{\frac{\omega}{4} + \eps - (u-k_+)_+ \leq 20\,\theta \Big(\frac{\omega}{4} + \eps\Big) \Big\}
\\*[1ex]
 & = \Big\{\frac{\omega}{4} + \theta\omega - u + \sup_B u - \frac{\omega}{4} \leq 5 \theta \omega + 20\,\theta^2 \omega  \Big\}
\\*[1ex]
 & \supset \Big\{u  \geq \sup_B u - 4 \theta \omega \Big\}
\end{align*}
provided that $\theta < 1/20$. Consequently, by defining $\tilde k \equiv \tilde k_+ := \sup_B u - 4 \theta \omega$ and using the above two displays, we get
\begin{align*}
\left( \mean{2\tau B} (u - \tilde k )_+^p \dx \right)^{1/p} & \leq 4 \theta \omega  \left( \frac{ |2\tau B \cap \{ u  \geq \sup_B u - 4 \theta \omega  \}| }{|2\tau B| }\right)^{1/p} \\[1ex]
&\leq 4 \theta \omega \left( \frac{ |2\tau B \cap \{ |\log \tilde w_+|  \geq|\log (20\,\theta)|  \}| }{|2\tau B| }\right)^{1/p} \\[1ex]
&\leq \frac{c\,\theta\omega}{|\log(20\,\theta)|}.
\end{align*}

Since $\tilde k \geq \sup_B g$, we have by Theorem \ref{thm:boundednessx0} that
\begin{equation*} 
\sup_{\tau B} (u - \tilde k )_+ \leq 
\delta \, {\rm Tail}((u - \tilde k )_+ ; 0, \tau R)+ c\,\delta^{-\gamma} \left( \mean{2\tau B} (u - \tilde k )_+^p \dx \right)^{1/p}
\end{equation*}
for any $\delta \in (0,1]$, and hence
\begin{equation} \label{suputauB0}
\sup_{\tau B} u  \leq \sup_B u - 4 \theta\omega + \delta \, {\rm Tail}((u - \tilde k )_+ ; 0, \tau R) + \frac{c\,\delta^{-\gamma }}{|\log(20\,\theta)|} \theta\omega.
\end{equation}
To estimate the tail term, we proceed similarly as in~\eqref{eq:Tail bndr osc 000} and obtain
\begin{align*}
{\rm Tail}((u - \tilde k )_+; 0,\tau R)^{p-1}  &\leq (\tau R)^{sp} \int_{B \setminus \tau B} (u(x) - \tilde k )_+^{p-1} |x|^{-n-sp}\dx \\
&\quad + \tau^{sp} {\rm Tail}(u;0,R)^{p-1} \\[1ex]
&\leq c\,(\theta\omega)^{p-1} \left( 1 + \frac{\tau^{sp}}{\theta^{p-1}\sigma^{p-1}}  \right) 
 \\*[1ex]
  &\leq c\,(\theta\omega)^{p-1},
\end{align*}
where we also used the facts $(u-\tilde k)_+ \leq 4\theta\omega$ in $B$, ${\rm Tail}(u;0,R) \leq \omega/\sigma$ by \eqref{oscB}, and  $\tau^{sp} \leq \tau_0^{sp} \leq \theta^{p-1}\sigma^{p-1}$. Thus, by choosing first $\delta$ small and then $\theta$ small accordingly, we deduce from \eqref{suputauB0} that 
\[
\sup_{\tau B} u  \leq \sup_B u - 2 \theta\omega,
\]
and \eqref{osctauB2} follows, as desired. This finishes the proof.
\end{proof}
 
Now, we have finally collected all the machinery to plainly deduce the H\"older continuity up the boundary. We have the following
\begin{theorem} \label{thm:H cont bdry}
Suppose that $u$ solves the obstacle problem in $\mathcal K_{g,h}(\Omega,\Omega')$ and assume $x_0\in \partial\Omega$ and $B_{2R}(x_0) \subset \Omega'$.
If $g \in \mathcal K_{g,h}(\Omega,\Omega')$ is H\"older continuous in $B_R(x_0)$ and $\Omega$ satisfies \eqref{eq:dens cond} for all $r\leq R$, then $u$ is H\"older continuous in $B_R(x_0)$ as well.
\end{theorem}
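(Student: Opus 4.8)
The plan is to derive the boundary H\"older estimate by iterating the oscillation reduction of Lemma~\ref{lemma:osc reduction} centred at $x_0$ — and, with the same constants, at all boundary points near $x_0$ — and then to merge this with the interior regularity already established. First I would normalise the situation: since adding a constant $c$ to $u$ leaves the functional $\mathcal Au$ of \eqref{def_a} unchanged while carrying $\mathcal K_{g,h}(\Omega,\Omega')$ onto $\mathcal K_{g+c,h+c}(\Omega,\Omega')$, the function $u-g(x_0)$ solves the obstacle problem with boundary datum $g-g(x_0)$ and obstacle $h-g(x_0)$; as this changes neither $\osc u$ nor the density condition \eqref{eq:dens cond}, we may assume $x_0=0$ and $g(0)=0$. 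Because $g\in\mathcal K_{g,h}(\Omega,\Omega')$ forces $h\le g$ a.e. in $\Omega$, for small $r$ one has $\esssup_{B_r(0)\cap\Omega}h\le\esssup_{B_r(0)}g<\infty$ and $\essinf_{B_r(0)}g>-\infty$ by the continuity of $g$, so Theorem~\ref{thm:boundednessx0} applies (note $\dist(0,\partial\Omega')\ge 2R$ since $B_{2R}(0)\subset\Omega'$) and $u$ is essentially bounded in $B_R(0)$. Letting $\beta\in(0,1)$ be the H\"older exponent of $g$ on $B_R(0)$, $[g]$ its $\beta$-H\"older seminorm there, and $\sigma,\theta,\tau_0$ the constants of Lemma~\ref{lemma:osc reduction}, the number
\[ \omega_0 \,:=\, \osc_{B_R(0)}u \,+\, \sigma\,{\rm Tail}(u;0,R)\,+\,8\,[g]\,R^{\beta} \]
is finite (the tail by $u\in L^{p-1}_{sp}(\R^n)$), and the two requirements in \eqref{oscB} hold at scale $R$ with $\omega=\omega_0$.

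Next I would fix once and for all the dilation factor $\tau:=\min\{\tau_0,(1-\theta)^{1/\beta}\}\in(0,1)$ — here the clause ``for every $\tau\in(0,\tau_0]$'' of Lemma~\ref{lemma:osc reduction} is used crucially — and set $R_k:=\tau^k R$, $\omega_k:=(1-\theta)^k\omega_0$. By induction on $k$ one checks that $\osc_{B_{R_k}(0)}u+\sigma\,{\rm Tail}(u;0,R_k)\le\omega_k$: the case $k=0$ is the previous paragraph, and assuming the bound at step $k$ one has, using $g(0)=0$ together with $\tau^\beta\le 1-\theta$ and $8[g]R^\beta\le\omega_0$,
\[ \osc_{B_{R_k}(0)}g \,\le\, [g]\,R_k^{\beta} \,=\, [g]\,R^{\beta}\,\tau^{k\beta} \,\le\, \frac{\omega_0}{8}(1-\theta)^{k} \,=\, \frac{\omega_k}{8}, \]
so \eqref{oscB} holds at scale $R_k$ with $\omega=\omega_k$ and the chosen $\tau$, whence \eqref{osctauB} gives the bound at step $k+1$ with $\omega_{k+1}=(1-\theta)\omega_k$. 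It follows that $\osc_{B_{R_k}(0)}u\le\omega_0(1-\theta)^{k}=\omega_0\,(R_k/R)^{\alpha}$ with $\alpha:=\log(1-\theta)/\log\tau\in(0,\beta]$, and comparing an arbitrary $\rho\in(0,R]$ with the geometric radii $R_k$ upgrades this to $\osc_{B_\rho(0)}u\le c\,\omega_0\,(\rho/R)^{\alpha}$; after undoing the normalisation, this is H\"older continuity of $u$ at $x_0$.

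The reasoning is uniform in the centre: the constants $\delta_\Omega$, $[g]$, hence $\alpha$, do not depend on which point $y\in\partial\Omega\cap B_{R/2}(x_0)$ one works at, so $\osc_{B_\rho(y)}u\le c\,\rho^{\alpha}$ holds for all such $y$ with one and the same $c,\alpha$. Combining this family of boundary decays with the interior H\"older estimate of Theorem~\ref{thm:obs H cont} by the usual two-scale splitting — for $x_1,x_2\in B_{R/2}(x_0)$ compare $|x_1-x_2|$ with $\dist(x_i,\partial\Omega)$ and pass either to a boundary ball containing both points, or to an interior ball around one of them — then yields H\"older continuity of $u$ throughout $B_R(x_0)$ (on a slightly smaller concentric ball, if one insists, via a routine covering).

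I expect the crux to be the induction of the second paragraph: a priori the quantity $\osc u+\sigma\,{\rm Tail}(u)$ contracts only at the fixed geometric rate $1-\theta$ furnished by Lemma~\ref{lemma:osc reduction}, whereas $\osc_{B_{R_k}(0)}g$ merely decays like $R_k^{\beta}$, so the smallness hypothesis \eqref{oscB} on $g$ risks being violated after finitely many steps unless these two rates are reconciled. The remedy is precisely to take the dilation factor $\tau$ small enough below $\tau_0$ that $\tau^{\beta}\le 1-\theta$ — permissible since Lemma~\ref{lemma:osc reduction} holds for every $\tau\in(0,\tau_0]$ — which also explains why the resulting exponent satisfies $\alpha\le\beta$. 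The remaining ingredients (invariance of the obstacle problem under adding a constant, and the possibility of initiating the iteration, which rests on the boundedness from Theorem~\ref{thm:boundednessx0} and the finiteness of the tail) are routine.
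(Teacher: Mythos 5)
Your argument is correct and follows essentially the same route as the paper: normalise to $x_0=0$, $g(0)=0$, initialise $\omega_0$ from the oscillation and tail of $u$ plus the H\"older seminorm of $g$, choose $\tau\le\tau_0$ small enough that $\osc_{\tau^j B}g$ decays at least as fast as $(1-\theta)^j\omega_0/8$, and iterate Lemma~\ref{lemma:osc reduction}. Your extra care about initialising via Theorem~\ref{thm:boundednessx0}, the uniformity in the boundary centre, and the merge with the interior estimate of Theorem~\ref{thm:obs H cont} is exactly what the paper defers to Theorem~\ref{thm:H cont up to bdry}, so nothing is missing.
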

\begin{proof}
We may assume $x_0=0$ and $g(0)=0$.
Moreover, we may choose $R_0$ such that $\osc_{B_0}g \leq \osc_{B_0}u$ for $B_0 \equiv B_{R_0}(0)$ since otherwise we have nothing to prove, and define
\begin{equation} \label{omega0}
\omega_0 := 8\left(\osc_{B_0}u + {\rm Tail}(u;0,R_0)\right).
\end{equation}
By Lemma \ref{lemma:osc reduction} there exist $\tau_0$, $\sigma$ and $\theta$ depending only on $n$, $p$, $s$ and $\delta_\Omega$ such that if
\begin{equation} \label{oscB0}
\osc_{B_r(0)}u + \sigma{\rm Tail}(u;0,r) \leq \omega \quad \text{and} \quad \osc_{B_r(0)}g \leq \frac\omega 8
\end{equation}
hold for a ball $B_r(0)$ and for $\omega>0$, then
\begin{equation} \label{osctauB0}
\osc_{B_{\tau r}(0)}u + \sigma{\rm Tail}(u;0,\tau r) \leq (1-\theta)\omega
\end{equation}
holds for every $\tau \in (0,\tau_0]$.  As we can take $\tau \leq \tau_0$ such that
\begin{equation} \label{osctauBg}
\osc_{\tau^{j}B_0}g \leq (1-\theta)^{j}\frac{\omega_0}{8} \qquad \text{for every } j=0,1,\dots.
\end{equation}
Now, iterating \eqref{osctauB0} with \eqref{oscB0} and \eqref{osctauBg} noticing also that the initial condition is satisfied by \eqref{omega0}, we obtain
\begin{equation*} \label{osctauBu}
\osc_{\tau^{j}B_0}u \leq (1-\theta)^{j}\omega_0 \qquad \text{for every } j=0,1,\dots.
\end{equation*}
Consequently, $u \in C^{0,\alpha}(B_0)$ with the exponent $\alpha=\log(1-\theta)/\log \tau \in (0,1)$.
\end{proof}

Slightly modifying the proof above, we easily obtain the following.
\begin{theorem} \label{thm:cont bdry}
Suppose that $u$ solves the obstacle problem in $\mathcal K_{g,h}(\Omega,\Omega')$ and assume $x_0\in \partial\Omega$ and $B_{2R}(x_0) \subset \Omega'$.
If $g \in \mathcal K_{g,h}(\Omega,\Omega')$ is continuous in $B_R(x_0)$ and $\Omega$ satisfies \eqref{eq:dens cond} for all $r\leq R$, then $u$ is continuous in $B_R(x_0)$ as well.
\end{theorem}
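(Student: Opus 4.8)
The plan is to mimic the proof of Theorem~\ref{thm:H cont bdry}, replacing the quantitative H\"older iteration by a qualitative modulus-of-continuity argument. As before, we may assume $x_0=0$ and $g(0)=0$, and we fix a radius $R_0$ with $B_{2R_0}(0)\subset\Omega'$. The oscillation-reduction lemma (Lemma~\ref{lemma:osc reduction}) is still available, but now the key point is that its hypothesis on the obstacle/boundary data, namely $\osc_{B_r(0)}g\leq\omega/8$, can only be guaranteed for $r$ small once $\omega$ has been fixed, since $g$ need no longer obey a power-type decay. So the iteration must be set up more carefully.

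First I would fix $\eps>0$ and aim to find $\rho>0$ such that $\osc_{B_\rho(0)}u\leq\eps$. Define $\omega_0:=8(\osc_{B_{R}(0)}u+{\rm Tail}(u;0,R))$ for a suitable starting radius $R\leq R_0$; by continuity of $g$ at $0$ (and $g(0)=0$) choose $R$ also small enough that $\osc_{B_{R}(0)}g\leq\omega_0/8$, so that the initial hypothesis \eqref{oscB0} holds at scale $R$. Then, as in Theorem~\ref{thm:H cont bdry}, as long as the obstacle condition $\osc_{\tau^{j}B}g\leq(1-\theta)^{j}\omega_0/8$ continues to hold, iterating \eqref{osctauB0} gives $\osc_{\tau^{j}B}u+\sigma{\rm Tail}(u;0,\tau^{j}R)\leq(1-\theta)^{j}\omega_0$. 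The subtlety is that, unlike in the H\"older case, we cannot choose a single $\tau$ making $\osc_{\tau^{j}B}g\leq(1-\theta)^{j}\omega_0/8$ hold for \emph{all} $j$; instead the chain of oscillation reductions will only run for finitely many steps before the obstacle term becomes the dominant one.

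Here is the main obstacle and how I would handle it: one must interleave the geometric decay coming from Lemma~\ref{lemma:osc reduction} with the (merely qualitative) decay of $\omega_g(r):=\osc_{B_r(0)}g$. The clean way is a stopping-time / relay argument. Run the iteration from scale $R$ downward. At each step, if $\osc_{B_r(0)}g\leq\osc_{B_r(0)}u$ we still control the data term by the current $\omega$ and the reduction \eqref{osctauB0} applies; the quantity $\osc_{B_r(0)}u+\sigma{\rm Tail}(u;0,r)$ keeps shrinking by a fixed factor $(1-\theta)$. If at some scale $\osc_{B_r(0)}g$ becomes comparable to or larger than the current oscillation of $u$, we simply \emph{restart} the procedure at that scale with a new, smaller $\omega$ defined in terms of $\osc_{B_r(0)}g$ and the current tail — legitimate because $g$ is continuous so $\osc_{B_r(0)}g\to0$. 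Each such restart strictly decreases the controlling quantity, and since $\omega_g(r)\to0$ the controlling quantity tends to $0$ as $r\to0$. Thus $\osc_{B_r(0)}u\to0$ as $r\to0^+$, i.e.\ $u$ is continuous at $x_0$; together with the interior continuity of Theorem~\ref{thm:obs cont} (valid since $h$ is continuous — here we may either assume $h$ continuous in $\Omega$ or note that only the boundary behavior is at issue) this gives continuity of $u$ on $B_R(x_0)$.

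Finally I would remark that, just as Theorem~\ref{thm:obs cont} follows from Theorem~\ref{thm:obs H cont} by checking that the convolution-type integral $\int_\rho^r(\rho/t)^\alpha\omega_h(t/\sigma)\,dt/t\to0$, one can alternatively package the argument above by quantifying: the proof of Lemma~\ref{lemma:osc reduction} and the iteration in Theorem~\ref{thm:H cont bdry} actually yield an estimate of the form $\osc_{B_\rho(0)}u\leq c(\rho/R)^{\alpha}\big(\osc_{B_R(0)}u+{\rm Tail}(u;0,R)\big)+c\int_\rho^{R}(\rho/t)^{\alpha}\omega_g(t)\,dt/t$, in complete analogy with \eqref{eq:obs cont}; continuity of $g$ then forces the right-hand side to vanish as $\rho\to0$ for $R$ fixed small, exactly as in the proof of Theorem~\ref{thm:obs cont}. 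Either presentation works; the relay/stopping-time version is the more self-contained one and is the route I would actually write out, keeping the computations to the bare statement that each relay step reduces a single monotone quantity that is controlled by $\omega_g$.
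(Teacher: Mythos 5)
Your proposal is correct and matches the paper's intent: the paper gives no details beyond ``slightly modifying the proof above,'' and the modification it has in mind is exactly your relay/stopping-time iteration (equivalently, the recursive choice $\omega_{j+1}=\max\{(1-\theta)\omega_j,\,8\osc_{\tau^{j+1}B_0}g\}$, which is non-increasing and tends to $0$ since $\osc_{B_r(0)}g\to0$), mirroring how Theorem~\ref{thm:obs cont} is deduced from Theorem~\ref{thm:obs H cont}. Both of your packagings are fine and no gap is present.
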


For the sake of completeness, we gather our continuity results into two global theorems. The first one follows by combining Theorems \ref{thm:obs H cont} and \ref{thm:H cont bdry} and the second one by combining Theorems \ref{thm:obs cont} and \ref{thm:cont bdry}.
\begin{theorem} \label{thm:H cont up to bdry}
Suppose that $\Omega$ satisfies \eqref{eq:dens cond}  
and $g \in \mathcal K_{g,h}(\Omega,\Omega')$. Let $u$ solve the obstacle problem in $\mathcal K_{g,h}(\Omega,\Omega')$. 
If $g$ is locally H\"older continuous in $\Omega'$ and $h$ is locally H\"older continuous in $\Omega$, then $u$ is locally H\"older continuous in $\Omega'$.
\end{theorem}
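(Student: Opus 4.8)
The plan is to assemble the three regimes that have already been handled separately: the interior estimate of Theorem~\ref{thm:obs H cont}, the boundary estimate of Theorem~\ref{thm:H cont bdry}, and the trivial exterior region $\Omega'\setminus\overline\Omega$, on which $u$ coincides almost everywhere with the (locally H\"older) function $g$. Since local H\"older continuity in the open set $\Omega'$ is a pointwise property, it suffices to produce, for every $x_0\in\Omega'$, a ball $B_\rho(x_0)\Subset\Omega'$, an exponent $\gamma>0$ and a constant on which $u$ is H\"older continuous of order $\gamma$; on a compact $K\Subset\Omega'$ one then extracts a finite subcover by such balls and takes the minimum of the finitely many exponents and the maximum of the constants, a standard Lebesgue-number argument handling pairs of points that do not lie in a common ball (this needs only $u\in L^\infty(K)$). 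I would treat the three cases as follows.

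For an interior point $x_0\in\Omega$: since $h$ and $g$ are locally bounded (being locally H\"older), Theorem~\ref{thm:obs bnd} gives $u\in L^\infty(B_r(x_0))$ for small $r$ (bounded above by the finite quantity $M$ there, and bounded below because $u\ge h$ in $\Omega$ and $u=g$ on $B_r\setminus\Omega$), so the Tail term and the $L^p$-average in \eqref{eq:obs cont} are finite. Writing $\omega_h(t)\le A\,t^{\beta_h}$ for $t$ small, with $\beta_h$ the local H\"older exponent of $h$ at $x_0$, I would estimate the tail-integral directly: $\int_\rho^r(\rho/t)^\alpha\,\omega_h(t/\sigma)\,\frac{dt}{t}\le c\,\rho^{\min\{\alpha,\beta_h\}}$ when $\beta_h\neq\alpha$, while the borderline case $\beta_h=\alpha$ produces $\rho^\alpha\log(r/\rho)$, which is absorbed by lowering the exponent to any $\gamma<\alpha$. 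Feeding this back into \eqref{eq:obs cont} gives $\osc_{B_\rho(x_0)}u\le c\,\rho^{\gamma}$ with $\gamma=\min\{\alpha,\beta_h\}$ (slightly reduced in the borderline case); this is exactly the content of Theorem~\ref{thm:obs H cont}.

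For a boundary point $x_0\in\partial\Omega$: since $\overline\Omega$ is a compact subset of $\Omega'$, I can choose $R\le r_0$ with $B_{2R}(x_0)\subset\Omega'$ and with $B_R(x_0)$ contained in a neighborhood of $x_0$ where $g$ is H\"older; as $g\in\mathcal K_{g,h}(\Omega,\Omega')$ by hypothesis and $\Omega$ satisfies \eqref{eq:dens cond}, Theorem~\ref{thm:H cont bdry} applies and yields $u\in C^{0,\alpha_\partial}(B_R(x_0))$ with $\alpha_\partial=\log(1-\theta)/\log\tau$. For an exterior point $x_0\in\Omega'\setminus\overline\Omega$: this set is open and contained in $\R^n\setminus\Omega$, where $u=g$ almost everywhere, so the continuous representative of $u$ agrees with $g$ near $x_0$ and inherits its local H\"older exponent. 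Combining the three cases gives local H\"older continuity of $u$ throughout $\Omega'$, with exponent $\min\{\alpha,\alpha_\partial,\beta_h,\beta_g\}$ on compact subsets, any borderline equality being lowered.

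I expect essentially no genuine obstacle here: the only nontrivial step is the elementary but slightly fussy estimate of the $\int_\rho^r(\rho/t)^\alpha\omega_h(t/\sigma)\,\frac{dt}{t}$ term together with the bookkeeping of the exponents across the patches; everything else is a direct reassembly of Theorems~\ref{thm:obs H cont} and \ref{thm:H cont bdry}, which is why the statement is included only for the sake of completeness. The points to which I would pay attention are to record the patched exponent correctly and to note explicitly that the finite-covering argument on $K\Subset\Omega'$ (carried out with balls, so that quasiconvexity of $\Omega'$ is irrelevant) upgrades the pointwise local statements to a genuine $C^{0,\gamma}(K)$ bound.
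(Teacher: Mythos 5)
Your proposal is correct and follows the same route as the paper, which proves this statement simply by combining Theorem~\ref{thm:obs H cont} (interior) with Theorem~\ref{thm:H cont bdry} (boundary); the extra detail you supply (the exterior region where $u=g$, the explicit bound on $\int_\rho^r(\rho/t)^\alpha\omega_h(t/\sigma)\,\frac{dt}{t}$, and the covering argument patching the pointwise oscillation decays into a uniform $C^{0,\gamma}$ bound on compacta) is a faithful elaboration of what the paper leaves implicit.
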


\begin{theorem} \label{thm:cont up to bdry}
Suppose that $\Omega$ satisfies \eqref{eq:dens cond}  
and $g \in \mathcal K_{g,h}(\Omega,\Omega')$. Let $u$ solve the obstacle problem in $\mathcal K_{g,h}(\Omega,\Omega')$.
If $g$ is continuous in $\Omega'$ and $h$ is continuous in $\Omega$, then $u$ is continuous in $\Omega'$.
\end{theorem}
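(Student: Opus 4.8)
The plan is to patch together three pieces of information: interior continuity of $u$ from Theorem~\ref{thm:obs cont}, boundary continuity of $u$ from Theorem~\ref{thm:cont bdry}, and the elementary continuity of $u$ outside $\overline\Omega$, where $u$ simply coincides with the continuous boundary datum $g$. Concretely, I would write
\[
\Omega' \,=\, \Omega \,\cup\, (\Omega'\setminus\overline\Omega) \,\cup\, \bigcup_{x_0\in\partial\Omega} B_{R(x_0)}(x_0),
\]
with the radii $R(x_0)$ chosen below, so that the right-hand side is an open cover of $\Omega'$; it then suffices to exhibit a continuous representative of $u$ on each member of the cover and to verify that these representatives agree on overlaps.

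For the interior piece: $h$ is continuous in $\Omega$, so Theorem~\ref{thm:obs cont} gives that (a representative of) $u$ is continuous in $\Omega$. For the exterior piece: $\Omega'\setminus\overline\Omega$ is an open subset of $\R^n\setminus\Omega$, where $u=g$ a.e., and $g$ is continuous in $\Omega'$; hence $u$ has a continuous representative there, namely $g$ itself. For the boundary pieces: fix $x_0\in\partial\Omega$ and, using $\Omega\Subset\Omega'$ and the radius $r_0$ appearing in the measure density condition~\eqref{eq:dens cond}, set $R(x_0):=\min\{\dist(\partial\Omega,\partial\Omega')/3,\,r_0\}$. Then $B_{2R(x_0)}(x_0)\subset\Omega'$, the set $\Omega$ satisfies~\eqref{eq:dens cond} for every $r\le R(x_0)$, and $g$ is continuous in $B_{R(x_0)}(x_0)$; thus Theorem~\ref{thm:cont bdry} applies and gives that $u$ is continuous in $B_{R(x_0)}(x_0)$.

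It remains to glue these local representatives. On the intersection of any two sets from the cover, the two associated continuous functions coincide almost everywhere (each equals $u$ a.e.), and therefore they coincide everywhere on that intersection by continuity; consequently they define a single function on $\Omega'$ which is continuous, since it is continuous on each member of an open cover. This provides the continuous representative of $u$ in $\Omega'$ and concludes the proof. I do not expect a genuine obstacle here: the whole substance of the statement is already contained in Theorem~\ref{thm:obs cont} and, above all, in the boundary result Theorem~\ref{thm:cont bdry} (itself a minor variant of the oscillation-decay Lemma~\ref{lemma:osc reduction}); the only mildly delicate point is the consistency check that upgrades almost-everywhere agreement of the local representatives to genuine agreement, allowing them to be patched into a globally continuous function.
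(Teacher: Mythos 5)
Your proposal is correct and follows exactly the route the paper intends: the paper's entire proof is the remark that the theorem ``follows by combining Theorems~\ref{thm:obs cont} and~\ref{thm:cont bdry}'', and you have simply made explicit the open cover, the choice of radii at boundary points, and the a.e.-to-everywhere gluing of continuous representatives. No gaps.
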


\smallskip

\end{document}